\theoremstyle{plain}
\newtheorem{thm}{Theorem}[section]
\newtheorem{lemma}[thm]{Lemma}
\newtheorem{prop}[thm]{Proposition}
\newtheorem{cor}[thm]{Corollary}
\newtheorem*{introcor}{Corollary}
\theoremstyle{remark}
\theoremstyle{definition}
\newtheorem{defn}[thm]{Definition}
\newtheorem*{introdefn}{Definition}
\newtheorem{example}[thm]{Example}
\newtheorem{question}[thm]{Question}
\newcommand{\Hom}{\operatorname{Hom}}
\newcommand{\M}{\mathcal{M}}
\newcommand{\N}{\mathcal{N}}
\newcommand{\E}{\mathcal{E}}
\newcommand{\F}{\mathcal{F}}
\newcommand{\G}{\mathcal{G}}
\newcommand{\tenE}{-\otimes_{\mathcal{O}_{X}}\mathcal{E}}
\newcommand{\Funct}{{\sf Funct}_{k}({\sf Qcoh }X,{\sf Qcoh }Y)}
\begin{document}
\title{The Eilenberg-Watts Theorem over Schemes}
\author{A. Nyman}
\address{Department of Mathematics, 516 High St, Western Washington University, Bellingham, WA 98225-9063}
\email{adam.nyman@wwu.edu} \keywords{Eilenberg-Watts Theorem, Morita
theory, non-commutative Hirzebruch surface}
\date{\today}
\thanks{2000 {\it Mathematics Subject Classification. } Primary  18F99; Secondary 14A22, 16D90, 18A25}
\thanks{The author was partially supported by the National Security Agency under grant NSA H98230-05-1-0021.}

\begin{abstract}
We study obstructions to a direct limit preserving right exact
functor $F$ between categories of quasi-coherent sheaves on schemes
being isomorphic to tensoring with a bimodule.  When the domain
scheme is affine, or if $F$ is exact, all obstructions vanish and we recover the Eilenberg-Watts
Theorem.  This result is crucial to
the proof that the noncommutative Hirzebruch surfaces constructed
in \cite{blowup} are noncommutative $\mathbb{P}^{1}$-bundles in
the sense of \cite{ncruled}.
 \end{abstract}

\maketitle \tableofcontents

\section{Introduction}

In this paper we describe a version of the Eilenberg-Watts Theorem over
schemes. In order to motivate our results we first recall the Eilenberg-Watts Theorem proved independently by Eilenberg \cite{E} and Watts \cite{W}:

\begin{thm} \label{thm.classical}
Let $k$ be a commutative ring, let $R$ and $S$ be $k$-algebras and let ${\sf Mod }R$ (resp. ${\sf Mod }S$) denote the category of right $R$-modules (resp. right $S$-modules).  If $F:{\sf Mod }R  \rightarrow {\sf Mod }S$ is a $k$-linear
right exact functor commuting with direct limits, then there
exists a $k$-central $R-S$-bimodule $M$ such that $F \cong
-\otimes_{R}M$.
\end{thm}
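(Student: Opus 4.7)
The plan is to take $M := F(R)$, equip it with a natural $R$--$S$-bimodule structure, and then build a natural isomorphism $\eta: -\otimes_R M \Rightarrow F$. The right $S$-action on $M$ is inherited from $F(R) \in {\sf Mod }S$. For the left $R$-action, observe that each $r \in R$ gives a right $R$-linear map $\lambda_r: R \to R$, $s \mapsto rs$; since $\lambda_r \circ \lambda_{r'} = \lambda_{rr'}$ and $r \mapsto \lambda_r$ is $k$-linear, applying the $k$-linear functor $F$ produces a $k$-algebra homomorphism $R \to \End_S(M)$, $r \mapsto F(\lambda_r)$. This makes $M$ an $R$--$S$-bimodule, automatically $k$-central because both $k$-actions coincide with the one induced by the $k$-linearity of $F$.

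To construct $\eta$, for a right $R$-module $N$ and $n \in N$ write $\rho_n: R \to N$ for the right $R$-linear map $r \mapsto nr$; then $F(\rho_n): M \to F(N)$ is right $S$-linear. The assignment $(n,m) \mapsto F(\rho_n)(m)$ is $k$-bilinear and $R$-balanced --- the identity $\rho_{nr} = \rho_n \circ \lambda_r$ yields $F(\rho_{nr}) = F(\rho_n) \circ F(\lambda_r)$, matching the definition of the left $R$-action on $M$ --- so it factors through $N \otimes_R M$ to give $\eta_N: N \otimes_R M \to F(N)$. Naturality in $N$ follows from $\phi \circ \rho_n = \rho_{\phi(n)}$ for any right $R$-linear $\phi: N \to N'$.

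The essential step --- where all three hypotheses converge --- is to prove $\eta_N$ is an isomorphism for every $N$. For $N = R$, $\eta_R$ is the canonical iso $R \otimes_R M \cong M$. Since $F$ commutes with direct limits, hence with arbitrary direct sums, and so does $-\otimes_R M$, the map $\eta_{R^{(I)}}$ is an isomorphism for every set $I$. For an arbitrary $N$, choose a free presentation $R^{(J)} \to R^{(I)} \to N \to 0$; applying $\eta$ produces a commutative diagram with two rows, exact by the right exactness of $-\otimes_R M$ and of $F$, whose left two vertical arrows are isomorphisms. The cokernel form of the five lemma then forces $\eta_N$ to be an isomorphism. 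The main obstacle is not conceptual but bookkeeping --- verifying the $R$-balancing and $S$-linearity of $\eta$, and being careful with the left/right conventions that make the left $R$-action on $F(R)$ commute with its right $S$-action.
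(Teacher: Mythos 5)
Your proof is correct and is the standard argument, and it agrees with the paper: the paper does not reprove this classical result (it cites Eilenberg and Watts), but it describes the bimodule $M = F(R)$ with left $R$-action $r\cdot m := F(\phi_r)m$ exactly as you do, and the construction of $\Theta$ in the paper's proof sketch of Proposition \ref{prop.wattone} is the same pattern you use to build $\eta$. Your extension from free modules to general $N$ via a free presentation, right exactness, commutation with direct sums, and the cokernel five lemma is the expected completion and is carried out correctly, including the $R$-balancing identity $\rho_{nr} = \rho_n\circ\lambda_r$ and the $k$-centrality check.
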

The bimodule $M$ in the previous theorem is easy to describe.
$M=F(R)$ as a right-module, and its left-module structure is
defined as follows:  for each $r \in R$, we let $\phi_{r} \in
\operatorname{Hom}_{R}(R,R)$ denote left multiplication by $r$.
For $m \in M$, we define $r \cdot m := F(\phi_{r})m$.

It is natural to ask if such a result holds when the categories
${\sf Mod }R$ and ${\sf Mod }S$ are replaced by categories of
quasi-coherent sheaves on schemes $X$ and $Y$, ${\sf Qcoh }X$ and
${\sf Qcoh }Y$.  In order to precisely pose the question in this
context, we need to introduce some notation.  To this end, if $Z$
is a scheme, $X$ and $Y$ are $Z$-schemes, $\E$ is a quasi-coherent
$\mathcal{O}_{X \times_{Z} Y}$-module, and the projections $X
\times_{Z} Y \rightarrow X,Y$ are denoted
${\operatorname{pr}}_{1}$ and ${\operatorname{pr}}_{2}$, we define
$$
\mathcal{M} \otimes_{\mathcal{O}_{X}} \E := \operatorname{pr}_{2*}
(\operatorname{pr}_{1}^{*}\M \otimes_{\mathcal{O}_{X \times_{Z}
Y}} \E).
$$
We make the further assumption that
$$
-\otimes_{\mathcal{O}_{X}}\E:{\sf Qcoh }X \rightarrow {\sf Qcoh }Y,
$$
which is automatic if $X \rightarrow Z$ is
quasi-compact, separated and $Z$ is affine.

Now let $k$ be a commutative ring and let $Z = \operatorname{Spec }k$.  Although the functor
$-\otimes_{\mathcal{O}_{X}}\E:{\sf Qcoh }X \rightarrow {\sf Qcoh }Y$ is not always right exact, it is
{\it locally} right exact in the sense that if $u:U \rightarrow X$ is an open immersion from an affine scheme to
$X$, then $u_{*}(-)\otimes_{\mathcal{O}_{X}}\E:{\sf Qcoh }U \rightarrow {\sf Qcoh }Y$ is right exact
(see the proof that (\ref{eqn.pullback}) is an isomorphism in Section 3).  This suggests that a natural generalization of Theorem \ref{thm.classical} to the case of functors
between quasi-coherent sheaves on schemes would involve a characterization of locally right exact $k$-linear functors
$F: {\sf Qcoh }X \rightarrow {\sf Qcoh }Y$ commuting with direct limits.  However, since (globally) right exact
functors $F: {\sf Qcoh }X \rightarrow {\sf Qcoh }Y$ appear naturally in the construction of certain non-commutative ruled surfaces
(see the remark following Theorem \ref{thm.almostend} for more details), and since our motivation for studying generalizations
of Theorem \ref{thm.classical} comes from attempts to better understand these constructions, we specialize our study to right exact functors.
It is thus natural for us to ask the following
\begin{question}
Let $F:{\sf Qcoh }X \rightarrow {\sf Qcoh }Y$ denote a $k$-linear,
right exact functor commuting with direct limits.  Is $F$
isomorphic to tensoring with a bimodule, i.e. does there exist an
object $\E$ of ${\sf Qcoh }\mathcal{O}_{X \times_{Z} Y}$ such that
$F \cong \tenE$?
\end{question}
When $X$ is affine, we recall in Proposition \ref{prop.wattone}
that the answer to this question is yes.  Proposition
\ref{prop.wattone} follows from a generalization of Theorem
\ref{thm.classical} proved in \cite{ns}.

In general, the answer to this question is {\it no}, as the
following example illustrates.
\begin{example} \cite[Example 3.1.3]{ncruled} \label{example.cohomology1}
Suppose $k$ is a field, $X=\mathbb{P}_{k}^{1}$ and
$Y=Z=\operatorname{Spec }k$. If $F={H}^{1}(X,-)$,
then $F$ is $k$-linear, right exact, and commutes with
direct limits.  However, as we will prove in Proposition
\ref{prop.sheafid}, $F$ is not isomorphic to tensoring with a
bimodule.
\end{example}
The purpose of this paper is to study the obstructions to a
$k$-linear right exact functor $F: {\sf Qcoh }X \rightarrow {\sf
Qcoh }Y$ which commutes with direct limits being isomorphic to tensoring with a bimodule.  In order to state our
main result, we introduce notation and conventions which will be
employed throughout the paper.

We let $k$ denote a commutative ring, $Z={\operatorname{Spec}}
k$ and we assume all schemes and products of
schemes are over $Z$. We assume $X$ is a quasi-compact and
separated scheme and $Y$ is a separated scheme.

We note that the category
$$
{\sf Funct}({\sf Qcoh }X,{\sf Qcoh }Y)
$$
of functors from ${\sf Qcoh }X$ to ${\sf Qcoh }Y$ is abelian, and
we denote the full subcategory of $k$-linear functors (see Section
2 for a precise definition of {\it $k$-linear functor}) in ${\sf
Funct}({\sf Qcoh }X,{\sf Qcoh }Y)$ by
$$
{\sf Funct}_{k}({\sf Qcoh }X,{\sf Qcoh }Y).
$$
The category ${\sf Funct}_{k}({\sf Qcoh }X,{\sf Qcoh }Y)$ is
abelian as well.  We denote the full subcategory of ${\sf
Funct}_{k}({\sf Qcoh }X,{\sf Qcoh }Y)$ consisting of right exact
functors commuting with direct limits by
$$
{\sf Bimod}_{k}(X-Y).
$$
We denote the full subcategory of ${\sf Bimod}_{k}(X-Y)$
consisting of functors which take coherent objects to coherent
objects by
$$
{\sf bimod}_{k}(X-Y).
$$

The following definition, studied in Section
\ref{section.totallyglobal}, plays a central role in our theory.
\begin{introdefn}
An object $F$ of $\Funct$ is {\it totally global} if for every
open immersion $u:U \rightarrow X$ with $U$ affine, $Fu_{*}=0$.
\end{introdefn}
The functor $F$ in Example \ref{example.cohomology1} is totally global.

In order to generalize the Eilenberg-Watts Theorem, we first study an
assignment, which we call the {\it Eilenberg-Watts functor},
$$
W:{\sf Bimod}_{k}(X-Y) \rightarrow {\sf Qcoh }X  \times  Y,
$$
whose construction was sketched in \cite[Lemma 3.1.1]{ncruled}.
We prove that it is functorial (Subsection \ref{subsection.defn}),
left-exact (Proposition \ref{prop.lex}), compatible with affine
localization (Proposition \ref{prop.loc}), and has the property
that if $F \cong -\otimes_{\mathcal{O}_{X}}\F$ then $W(F) \cong \F$
(Proposition \ref{prop.sheafid}).  It follows from Propostion
\ref{prop.wattone} that if $X$ is affine, then $F \cong
-\otimes_{\mathcal{O}_{X}}W(F)$.

We then work towards our main result, established in Section
\ref{section.wattstheory}:

\begin{thm} \label{thm.main}
If $F \in {\sf Bimod}_{k}(X-Y)$, then there exists a
natural transformation
$$
\Gamma_{F}:F \longrightarrow -\otimes_{\mathcal{O}_{X}}W(F)
$$
such that $\operatorname{ker }\Gamma_{F}$
and $\operatorname{cok }\Gamma_{F}$ are totally global (Corollary
\ref{cor.totallyglobal}).  Furthermore, $\Gamma_{F}$ is an isomorphism if
\begin{enumerate}
\item{} $X$ is affine or

\item{} $F$ is exact (Corollary \ref{thm.cokvanish}) or

\item{}  $F \cong -\otimes_{\mathcal{O}_{X}}\F$ for some object $\F$
in ${\sf Qcoh }X \times Y$ (Proposition \ref{prop.recover}).
\end{enumerate}
\end{thm}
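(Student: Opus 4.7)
The plan is to glue the locally defined isomorphisms supplied by the affine case of Watts's theorem into a single natural transformation $\Gamma_{F}$, read off the totally global property from that construction, and then dispose of each of the three isomorphism conditions.

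First, I would construct $\Gamma_{F}$. For any open immersion $u:U\to X$ with $U$ affine, the composite $Fu_{*}$ lies in ${\sf Bimod}_{k}(U-Y)$, so Proposition~\ref{prop.wattone} supplies a natural isomorphism $Fu_{*}\cong -\otimes_{\mathcal{O}_{U}} W(Fu_{*})$. Compatibility of $W$ with affine localization (Proposition~\ref{prop.loc}) identifies $W(Fu_{*})$ with the restriction of $W(F)$ to $U\times Y$, and a flat base change / projection formula computation identifies $\N\otimes_{\mathcal{O}_{U}} W(F)|_{U\times Y}$ with $u_{*}\N\otimes_{\mathcal{O}_{X}} W(F)$ for every $\N\in{\sf Qcoh }U$. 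Splicing these three isomorphisms produces a natural isomorphism $\alpha_{u}:Fu_{*}\xrightarrow{\sim}u_{*}(-)\otimes_{\mathcal{O}_{X}} W(F)$. The main step is to verify that the family $\{\alpha_{u}\}$ is compatible on inclusions of affine opens and descends, via a finite affine cover of $X$ (available since $X$ is quasi-compact and separated), to a natural transformation $\Gamma_{F}:F\to -\otimes_{\mathcal{O}_{X}} W(F)$ satisfying $\Gamma_{F} u_{*}=\alpha_{u}$ for every such $u$.

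The totally global statement is then immediate: because $\Gamma_{F} u_{*}=\alpha_{u}$ is an isomorphism for every such $u$, both $(\operatorname{ker }\Gamma_{F})u_{*}$ and $(\operatorname{cok }\Gamma_{F})u_{*}$ vanish. For~(1), taking $u=\Id_{X}$ shows $\Gamma_{F}=\alpha_{\Id}$ is already an isomorphism. For~(3), Proposition~\ref{prop.sheafid} gives $W(-\otimes_{\mathcal{O}_{X}}\F)\cong\F$, and Proposition~\ref{prop.recover} verifies that under this identification $\Gamma_{F}$ agrees with the identity natural transformation on $-\otimes_{\mathcal{O}_{X}}\F$.

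Part~(2) is the main obstacle. Fix a finite affine cover $\{U_{i}\}$ of $X$; by separatedness every intersection $U_{I}=\bigcap_{i\in I}U_{i}$ is affine, so the \v{C}ech resolution $0\to\M\to C^{0}\to C^{1}\to\cdots\to C^{n-1}\to 0$ is exact for every $\M\in{\sf Qcoh }X$, with each $C^{i}$ a direct sum of pushforwards $u_{I*}u_{I}^{*}\M$. The previous paragraph shows $\Gamma_{F}$ is already an isomorphism on each $C^{i}$. Applying the exact $F$ and the right exact $T:=-\otimes_{\mathcal{O}_{X}} W(F)$ yields a commutative diagram with exact top row $0\to F(\M)\to F(C^{0})\to F(C^{1})\to\cdots$, right exact bottom row, and vertical isomorphisms $F(C^{i})\cong T(C^{i})$ in every nonnegative degree. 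Naturality forces the composite $F(\M)\xrightarrow{\Gamma_{F}(\M)}T(\M)\to T(C^{0})\cong F(C^{0})$ to coincide with the injection $F(\M)\hookrightarrow F(C^{0})$, so $\operatorname{ker }\Gamma_{F}=0$. The remaining step, vanishing of $\operatorname{cok }\Gamma_{F}$, is genuinely delicate: the cokernel is totally global but not manifestly left exact, so the \v{C}ech complex does not immediately witness its vanishing. This is the content of Corollary~\ref{thm.cokvanish}; I would attempt it by propagating the already-established isomorphism on each $C^{i}$ down through the iterated \v{C}ech cokernels via snake-lemma chases, using exactness of $F$ to control the Tor-like obstructions that arise because $T$ is only right exact.
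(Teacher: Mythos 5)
Your plan for parts (1) and (3) is essentially the paper's, but two steps contain genuine gaps.

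\textbf{The descent step in the construction of $\Gamma_F$ is the real work and you have left it unexplained.} You produce an isomorphism $\alpha_u : Fu_* \xrightarrow{\sim} u_*(-)\otimes_{\mathcal{O}_X} W(F)$ for each affine open immersion $u$ and then assert the family ``descends'' to a transformation $\Gamma_F : F \to -\otimes_{\mathcal{O}_X} W(F)$ with $\Gamma_F * u_* = \alpha_u$. But the functors $u_*$ do not jointly generate $\text{Qcoh }X$ in any way that makes this automatic: a general $\M$ is only the \emph{kernel} of the beginning of its \v{C}ech complex, $\M \cong \ker\bigl(\oplus_i u_{i*}u_i^*\M \to \oplus_{i<j} u_{ij*}u_{ij}^*\M\bigr)$, and neither $F$ nor $T := -\otimes_{\mathcal{O}_X} W(F)$ is left exact, so neither preserves that kernel. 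The composite $F\M \to FC^0 \xrightarrow{\alpha} TC^0$ lands in $TC^0$, not in $T\M$, and there is no canonical lift without additional input. The paper gets around this by \emph{not} defining $\Gamma_F$ by gluing isomorphisms. Instead it first defines $\Gamma_{F\mathcal{L}}$ only for \emph{flat} $\mathcal{L}$, as the composite of the canonical (not necessarily invertible) map $F(\ker\delta_{\mathcal{L}}) \to \ker F\delta_{\mathcal{L}}$ with two isomorphisms, the last of which ($\ker(\mathcal{L}\otimes\theta_F) \cong \mathcal{L}\otimes\ker\theta_F$) uses flatness of $\mathcal{L}$; it then extends to arbitrary $\M$ via flat presentations, which exist because $X$ is separated. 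Built this way, $\Gamma_F$ is \emph{not} assumed invertible on pushforwards from affines; that is proved afterwards (Propositions~\ref{prop.compatwithaffine} and~\ref{prop.recover}), and only then does Corollary~\ref{cor.totallyglobal} follow.

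\textbf{Your treatment of part (2) is missing the key ingredient and the substitute is unlikely to close the gap.} You propose chasing the isomorphism on each \v{C}ech term $C^i$ down through iterated cokernels while ``controlling the Tor-like obstructions'' arising because $T$ is only right exact. You do not say how those obstructions are controlled, and with $T$ only right exact it is unclear they can be. The paper's proof of Corollary~\ref{thm.cokvanish} instead rests on Corollary~\ref{cor.fexact}: if $F$ is exact, then $T$ is \emph{also} exact (this is checked by restricting to each affine chart $U_i$ and using $Fu_{i*} \cong -\otimes_{\mathcal{O}_{U_i}} v_i^*W(F)$, which is exact because $F$ and $u_{i*}$ are). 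Once $T$ is known to be left exact, the argument collapses to comparing the two kernels of $\delta_\M$ and $\delta_\M \otimes W(F)$ via Lemma~\ref{lemma.descent} --- no iterated \v{C}ech chase is required, and only the first two terms of the complex appear. Without Corollary~\ref{cor.fexact} or an equivalent statement, your proposed argument for surjectivity of $\Gamma_{F\M}$ does not go through.
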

As a consequence, if $F \in {\sf Bimod}_{k}(X-Y)$, then
$-\otimes_{\mathcal{O}_{X}}W(F)$ serves as the ``best"
approximation of $F$ by tensoring with a bimodule in the following sense (Corollary
\ref{cor.approx}):

\begin{introcor}
Let $\mathcal{F}'$ be an object of ${\sf Qcoh }X \times Y$ and
suppose $F' := -\otimes_{\mathcal{O}_{X}}\F'$ is an object in
${\sf Bimod}_{k}(X-Y)$.  If $\Phi:F \rightarrow F'$ is a morphism
in ${\sf Bimod}_{k}(X-Y)$, then $\Phi$ factors through
$\Gamma_{F}$.
\end{introcor}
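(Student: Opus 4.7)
The plan is to use the functoriality of the Eilenberg-Watts construction together with part (3) of Theorem \ref{thm.main}, which says that $\Gamma_{F'}$ is an isomorphism whenever $F' \cong -\otimes_{\mathcal{O}_{X}}\F'$. The key input will be the naturality of the family $\{\Gamma_{F}\}_{F}$ in $F$, asserted in the statement of Theorem \ref{thm.main}.

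First I would apply $W$ to $\Phi$ to obtain a morphism $W(\Phi):W(F) \to W(F')$ in ${\sf Qcoh }X\times Y$; this uses the functoriality of $W$ established in Subsection \ref{subsection.defn}. Tensoring, this yields a morphism
$$
-\otimes_{\mathcal{O}_{X}}W(\Phi):-\otimes_{\mathcal{O}_{X}}W(F) \longrightarrow -\otimes_{\mathcal{O}_{X}}W(F')
$$
in ${\sf Bimod}_{k}(X-Y)$. By Proposition \ref{prop.sheafid}, $W(F') \cong \F'$, so that $-\otimes_{\mathcal{O}_{X}}W(F') \cong F'$; alternatively, by part (3) of Theorem \ref{thm.main}, $\Gamma_{F'}:F' \to -\otimes_{\mathcal{O}_{X}}W(F')$ is an isomorphism.

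Next I would invoke the naturality of $\Gamma$ in the variable $F$ to obtain the commutative square
$$
\begin{CD}
F @>{\Phi}>> F' \\
@V{\Gamma_{F}}VV @VV{\Gamma_{F'}}V \\
-\otimes_{\mathcal{O}_{X}}W(F) @>{-\otimes_{\mathcal{O}_{X}}W(\Phi)}>> -\otimes_{\mathcal{O}_{X}}W(F').
\end{CD}
$$
Defining $\Psi := \Gamma_{F'}^{-1}\circ(-\otimes_{\mathcal{O}_{X}}W(\Phi)):-\otimes_{\mathcal{O}_{X}}W(F) \to F'$, which makes sense since $\Gamma_{F'}$ is an isomorphism, the commutativity of the square gives $\Psi \circ \Gamma_{F} = \Gamma_{F'}^{-1}\circ \Gamma_{F'}\circ \Phi = \Phi$. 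This exhibits the desired factorization.

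The only nontrivial ingredient is the naturality of $\Gamma$, which is implicit in Theorem \ref{thm.main}'s phrasing of $\Gamma$ as a natural transformation between the functors $F \mapsto F$ and $F \mapsto -\otimes_{\mathcal{O}_{X}}W(F)$ on ${\sf Bimod}_{k}(X-Y)$; assuming this has been verified during the construction of $\Gamma_{F}$ in Section \ref{section.wattstheory}, the corollary is then a purely formal consequence. Thus the main obstacle is not in this corollary but in the construction of $\Gamma$ itself: one must make sure the pointwise definition of $\Gamma_{F}$ is genuinely functorial in $F$.
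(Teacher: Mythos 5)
Your proof is correct and is essentially identical to the paper's argument for Corollary \ref{cor.approx}: form the naturality square for $\Gamma$ applied to the morphism $\Phi$, observe that $\Gamma_{F'}$ is an isomorphism by Proposition \ref{prop.recover}, and invert it to produce the factorization. The paper establishes the naturality of $\Gamma$ in $F$ in Step 8 of the construction in Section \ref{section.wattstheory}, exactly filling the ``only nontrivial ingredient'' you flagged.
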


In order to describe necessary and sufficient conditions for $\Gamma_{F}$ to be an isomorphism,
we introduce some notation:  Let $\{U_{i}\}$ be a finite affine open cover of $X$, let $U_{ij} : = U_{i} \cap U_{j}$,
and let $u_{i}:U_{i} \rightarrow X$ and $u_{ij}^{i}:U_{ij} \rightarrow U_{i}$ denote inclusions.  If $\mathcal{M}$
is an object of ${\sf Qcoh }X$, there is a canonical morphism (defined by (\ref{eqn.delta1}))
$$
\delta_{\M}:\oplus_{i}u_{i*}u_{i}^{*}\M \longrightarrow
\oplus_{i<j}u_{i*}u_{ij*}^{i}u_{ij}^{i*}u_{i}^{*}\M
$$
which is essentially the beginning of the sheafified $\check{\mbox{C}}$ech complex.  We prove the following (Corollary \ref{cor.gammaisom}):

\begin{thm} \label{thm.gammaisom}
If $F \in {\sf Bimod}_{k}(X-Y)$ then $\Gamma_{F}$ is an isomorphism if and only if
\begin{enumerate}

\item{} for all flat objects $\mathcal{L}$ in ${\sf Qcoh }X$, the canonical map $F\operatorname{ker }\delta_{\mathcal{L}} \rightarrow \operatorname{ker }F \delta_{\mathcal{L}}$ is an isomorphism, and

\item{} $-\otimes_{\mathcal{O}_{X}}W(F)$ is right exact.

\end{enumerate}
\end{thm}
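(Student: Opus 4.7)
The plan is to leverage the \v{C}ech-type sequence
$$
0 \longrightarrow \mathcal{M} \longrightarrow \bigoplus_{i} u_{i*}u_{i}^{*}\mathcal{M} \stackrel{\delta_{\mathcal{M}}}{\longrightarrow} \bigoplus_{i<j} u_{i*}u_{ij*}^{i}u_{ij}^{i*}u_{i}^{*}\mathcal{M},
$$
which is exact for every $\mathcal{M}$ in ${\sf Qcoh}\,X$ because $X$ is separated and $\{U_{i}\}$ is an affine cover, together with the ``totally global'' vanishing of $\operatorname{ker}\Gamma_{F}$ and $\operatorname{cok}\Gamma_{F}$ from Theorem \ref{thm.main}. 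The latter says precisely that $\Gamma_{F}$ is an isomorphism on any sheaf of the form $u_{i*}\mathcal{N}$, so applying $F$ and $-\otimes_{\mathcal{O}_{X}}W(F)$ to the \v{C}ech sequence and linking the two resulting rows by $\Gamma_{F}$ produces a commutative diagram whose second and third vertical columns are automatically isomorphisms.

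For the reverse direction, assume (1) and (2) and first prove that $\Gamma_{F}(\mathcal{L})$ is an isomorphism whenever $\mathcal{L}$ is flat. Condition (1) yields left-exactness of the top row at its first two terms. The bottom row is the image of the \v{C}ech sequence under $-\otimes_{\mathcal{O}_{X}}W(F)$; one establishes left-exactness at its first term by applying the exact functor $\operatorname{pr}_{1}^{*}$ to the \v{C}ech sequence, using flatness of $\operatorname{pr}_{1}^{*}\mathcal{L}$ together with the right exactness supplied by condition (2) to preserve the required kernel after tensoring with $W(F)$, and then pushing forward by the left-exact functor $\operatorname{pr}_{2*}$. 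A short diagram chase against the two isomorphisms in the other columns then forces $\Gamma_{F}(\mathcal{L})$ to be an isomorphism. To pass to an arbitrary $\mathcal{M}$, choose a flat presentation $\mathcal{L}_{1} \to \mathcal{L}_{0} \to \mathcal{M} \to 0$; since both $F$ and $-\otimes_{\mathcal{O}_{X}}W(F)$ are right exact, the induced map of right-exact sequences has isomorphisms in its first two columns by the previous step, and the five lemma completes the argument.

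The forward direction is a formal converse. If $\Gamma_{F}$ is an isomorphism, then $F \cong -\otimes_{\mathcal{O}_{X}}W(F)$, so (2) is immediate from right exactness of $F$, and (1) is precisely the left-exactness of the bottom row that was verified above, transported across the natural isomorphism $\Gamma_{F}$.

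The main obstacle is the claim that $-\otimes_{\mathcal{O}_{X}}W(F)$ preserves $\operatorname{ker}\delta_{\mathcal{L}}$ for flat $\mathcal{L}$ whenever condition (2) holds. The delicate point is that tensoring the pulled-back \v{C}ech sequence on $X \times Y$ with $W(F)$ need not be left-exact on the nose; I expect to exploit Proposition \ref{prop.loc} (compatibility of $W$ with affine localization) to reduce the question on each $U_{i} \times Y$ to the classical affine Eilenberg-Watts situation, where flatness of $u_{i}^{*}\mathcal{L}$ renders the relevant tensor product exact and hence the kernel is preserved; the global conclusion then follows from left exactness of $\operatorname{pr}_{2*}$.
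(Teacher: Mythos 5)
There is a genuine gap in your argument for the backward direction, and it stems from overlooking the factorization of $\Gamma_{F\mathcal{L}}$ already established in the construction.  In Step~2 of the construction of $\Gamma_F$, the paper writes $\Gamma_{F\mathcal{L}} = \pi_3\,\pi_2\,\pi_1$ for $\mathcal{L}$ flat, where $\pi_1$ is precisely the canonical map $F\operatorname{ker}\delta_{\mathcal{L}} \to \operatorname{ker}F\delta_{\mathcal{L}}$, while $\pi_2$ and $\pi_3$ are \emph{always} isomorphisms: $\pi_2$ comes from the diagram (\ref{eqn.kerneldiagram}) whose middle verticals $\gamma_i,\gamma_{ij}$ are isomorphisms (because $Fu_{i*}$ is a tensor functor), and $\pi_3 \colon \operatorname{ker}(\mathcal{L}\otimes\theta_F) \to \mathcal{L}\otimes W(F)$ exists because $\operatorname{pr}_1^*\mathcal{L}$ is flat (so $\operatorname{pr}_1^*\mathcal{L}\otimes_{\mathcal{O}_{X\times Y}}-$ preserves the kernel of $\theta_F$) and $\operatorname{pr}_{2*}$ is left exact.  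This is exactly Proposition~\ref{prop.gammaonflat}, and it renders condition~(1) \emph{equivalent} to ``$\Gamma_{F\mathcal{L}}$ is an isomorphism for flat $\mathcal{L}$'' with no further work.  Condition~(2) then enters only to push from flat presentations to arbitrary $\mathcal{M}$, which is your five-lemma step and is correct.

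Your workaround, replacing the factorization by a \v{C}ech five-lemma against the row
$0 \to \mathcal{L}\otimes W(F) \to (\oplus_i u_{i*}u_i^*\mathcal{L})\otimes W(F) \to (\oplus_{i<j}\cdots)\otimes W(F)$,
does not close.  The exactness of that row at its first term is not a consequence of (2) plus flatness of $\operatorname{pr}_1^*\mathcal{L}$.  Two separate problems: (a) $\operatorname{pr}_1^*$ is not in general exact when $k$ is not a field, so ``applying the exact functor $\operatorname{pr}_1^*$'' is unjustified (the monicity of the pulled-back \v{C}ech map should instead come from base change, identifying it with the \v{C}ech sequence of $\operatorname{pr}_1^*\mathcal{L}$ on $X\times Y$); and more seriously (b) flatness of $\operatorname{pr}_1^*\mathcal{L}$ lets you tensor \emph{by} $\operatorname{pr}_1^*\mathcal{L}$ exactly, i.e.\ it applies when $\operatorname{pr}_1^*\mathcal{L}$ is held fixed and the other factor varies (this is what makes $\pi_3$ an isomorphism, tensoring $\operatorname{pr}_1^*\mathcal{L}$ against the $\theta_F$ sequence of sheaves on $X\times Y$); it says nothing about tensoring the \v{C}ech sequence of $\mathcal{L}$ against the fixed $W(F)$, which is what you need.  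Right exactness (condition~(2)) cannot supply the missing left-exactness.  You do flag this as ``the main obstacle,'' but the proposed resolution via Proposition~\ref{prop.loc} is only a hope: localizing a map $\alpha$ and finding $v_i^*\alpha$ monic on each patch does not immediately recover monicity of $\alpha$ on $Y$, and nothing in the sketch addresses that.  The clean route is the paper's: the \v{C}ech comparison should be done against $\theta_F$ (the complex for $W(F)$), not against $\delta_{\mathcal{L}}\otimes W(F)$, and that comparison is already packaged inside $\pi_2,\pi_3$.

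Your forward direction and your (abandoned) five-lemma step from flats to general $\mathcal{M}$ are fine, and your use of Corollary~\ref{cor.totallyglobal} to see that $\Gamma_F$ is an isomorphism on sheaves of the form $u_{i*}\mathcal{N}$ is correct.  The item to repair is the reduction to flat objects: cite Proposition~\ref{prop.gammaonflat} (or reprove its Step~2 factorization) rather than attacking $\delta_{\mathcal{L}}\otimes W(F)$ directly.
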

The first item in Theorem \ref{thm.gammaisom} says that $F$ must be close to being flat-acyclic, hence
close to being a tensor product.  The second item in Theorem \ref{thm.gammaisom} implies that
$-\otimes_{\mathcal{O}_{X}}W(F)$ is in ${\sf Bimod}_{k}(X-Y)$.

Theorem \ref{thm.main} suggests that in order to obtain more precise information about objects in ${\sf Bimod}_{k}(X-Y)$
one must have a better understanding of the structure of totally global functors.
While a general structure theory of totally global functors seems far off,
we begin a very specialized investigation of this subject in Section \ref{section.structure}.  In particular,
we classify totally global functors in ${\sf
bimod}_{k}({\mathbb{P}^{1}}-{\mathbb{P}^{0}})$ when $k$ is
an algebraically closed field.  Our result in this direction is the following (Corollary \ref{cor.structureoftg}):

\begin{thm} \label{thm.almostend}
If $F \in {\sf bimod}_{k}(\mathbb{P}^{1}-\mathbb{P}^{0})$ is totally global, then $F$ is a direct sum of
cohomologies, i.e. there exist integers $m,
n_{i} \geq 0$ such that
$$
F \cong \oplus_{i=-m}^\infty {H}^{1}(\mathbb{P}^{1},(-)(i))^{\oplus n_{i}}.
$$
\end{thm}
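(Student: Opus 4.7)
My plan is to reduce the classification of totally global $F \in {\sf bimod}_{k}(\mathbb{P}^{1}-\mathbb{P}^{0})$ to a structure result for graded $\m$-torsion modules over the graded polynomial ring $R := k[s,t]$, with $\m := (s,t)$. Define $a_i := F(\mathcal{O}_{\mathbb{P}^{1}}(i))$, each a finite-dimensional $k$-vector space because $F$ preserves coherence. The image under $F$ of the multiplication maps $\mathcal{O}(i) \otimes_k H^{0}(\mathcal{O}(n)) \to \mathcal{O}(i+n)$ endows $a_\bullet := \bigoplus_{i} a_i$ with the structure of a graded $R$-module. Writing $M := \bigoplus_{n} H^{1}(\mathbb{P}^{1}, \mathcal{O}(n))$ for the ``top local cohomology'' $R$-module, the heart of the proof is the claim that
\[
a_\bullet \;\cong\; \bigoplus_{j \geq -m} M(j)^{\oplus n_j}
\]
as graded $R$-modules, for some integers $m, n_j \geq 0$. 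Granting this claim the theorem follows: every coherent sheaf on $\mathbb{P}^{1}$ admits a length-$1$ resolution by line bundles (Grothendieck splitting), so both $F$ and the candidate functor $\bigoplus_{j} H^{1}(\mathbb{P}^{1},(-)(j))^{\oplus n_j}$ are right-exact direct-limit-preserving functors and hence determined up to natural isomorphism by their associated graded $R$-module, the latter's associated module being $\bigoplus_{j} M(j)^{\oplus n_j}$.

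To prove the module claim I would establish four properties of $a_\bullet$. First, \emph{$\m$-torsion}: with $U_0 := \mathbb{P}^{1}\setminus V(s)$, one has $(u_0)_{*}\mathcal{O}_{U_0} \cong \varinjlim_n \mathcal{O}(n)$ with transitions given by multiplication by $s$, so total globality and direct-limit preservation of $F$ force $\varinjlim_n a_n = 0$, and every element of $a_\bullet$ is killed by some power of $s$; by symmetry also by some power of $t$. Second, \emph{divisibility by linear forms}: any coherent sheaf with finite support is a pushforward from an affine open and therefore vanishes under $F$, so applying right exactness of $F$ to $0 \to \mathcal{O}(i-1) \xrightarrow{\sigma} \mathcal{O}(i) \to k_{V(\sigma)} \to 0$ yields a surjection $\sigma \colon a_{i-1} \twoheadrightarrow a_i$ for every nonzero $\sigma \in R_1$. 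Third, \emph{top-boundedness}: the weak decrease in $\dim a_i$ coming from divisibility must stabilize; if the stable value were nonzero, $s$ would act bijectively on the tail, contradicting $\m$-torsion, so $a_i = 0$ for $i$ sufficiently large. Fourth, \emph{Koszul vanishing}: the Euler sequence
\[
0 \to \mathcal{O}(n-2) \xrightarrow{(t,-s)} \mathcal{O}(n-1)^{2} \xrightarrow{(s,t)} \mathcal{O}(n) \to 0
\]
is the ${\sf Proj}$-sheafification of the Koszul resolution of $k = R/\m$, and the middle exactness of $a_{n-2} \to a_{n-1}^{2} \to a_n \to 0$ forced by right exactness of $F$ translates precisely into $\Ext^{1}_{R}(k, a_\bullet) = 0$ in every graded degree.

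Combining these four properties, $a_\bullet$ is injective in the abelian category $\mathcal{C}$ of graded $\m$-torsion $R$-modules: every finitely generated $N \in \mathcal{C}$ admits a composition series with shifts of the residue field $k$ as subquotients, so $\Ext^{1}_{R}(k, a_\bullet) = 0$ implies $\Ext^{1}_{\mathcal{C}}(N, a_\bullet) = 0$ by d\'evissage, and filtered colimits extend this to arbitrary $N \in \mathcal{C}$. The indecomposable injectives of $\mathcal{C}$ are (shifts of) the graded injective hull $M$ of the residue field (graded Matlis duality), giving $a_\bullet \cong \bigoplus_j M(j)^{\oplus n_j}$. Since the socle of $M(j)$ is concentrated in degree $-j-2$, top-boundedness forces $n_j = 0$ for $j < -m$, where $m$ is chosen so that $a_i = 0$ for all $i > m-2$.

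The main obstacle I anticipate is step four together with the subsequent structure step: one must carefully track signs so that $F$-right-exactness of the Euler sequence lines up with the Koszul cocycle description of $\Ext^{1}_{R}(k,-)$, and then invoke graded Matlis-type theory (or a direct Baer argument in $\mathcal{C}$) to pass from the Ext vanishing to the direct-sum decomposition into shifts of $M$. Once in place, the final identification $F \cong \bigoplus_j H^{1}(\mathbb{P}^{1},(-)(j))^{\oplus n_j}$ as functors is automatic, since the associated graded $R$-module is a complete isomorphism invariant for right-exact direct-limit-preserving functors ${\sf Qcoh }\mathbb{P}^{1} \to {\sf Qcoh }\mathbb{P}^{0}$.
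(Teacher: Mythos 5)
Your proposal is correct, but it takes a genuinely different route from the paper. The paper works entirely within functor categories: it introduces the notion of an \emph{admissible} functor, constructs by hand a monic natural transformation $H^{1}(\mathbb{P}^{1},(-)(-2-r))\to F$ on line bundles by an inductive procedure using the Euler sequence (Proposition \ref{propo.split}), proves that this monic splits by a Serre-duality argument built around the auxiliary transformation $\Phi_F:F\to\Hom(-,\G)^{*}\otimes_k F\G$ (Lemmas \ref{lemma.h1}--\ref{lemma.split}, Corollary \ref{cor.extensionsplit}), and then peels off one copy of $H^{1}$ at a time (Theorem \ref{thm.structureoftg}). You instead transport the problem to the category of graded $\m$-torsion modules over $R=k[s,t]$ by recording $a_\bullet=\bigoplus F\mathcal{O}(i)$ with its $R$-action; your four observations ($\m$-torsion from $Fu_{0*}=0$ and $u_{0*}\mathcal{O}_{U_{0}}\cong\varinjlim\mathcal{O}(n)$, surjectivity along linear forms from right exactness and vanishing on torsion, top-boundedness, and $\operatorname{Tor}_{1}^{R}(k,a_\bullet)=0$ via middle-exactness of the $F$-image of the Euler sequence) then yield injectivity of $a_\bullet$ in that category, and graded Matlis theory gives the direct-sum decomposition into shifts of $H^{2}_{\m}(R)=\bigoplus_n H^{1}(\mathbb{P}^{1},\mathcal{O}(n))$. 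Two points deserve emphasis. First, as you anticipated there is a genuine sign bookkeeping issue at the Koszul step: applying $F$ to the Euler sequence directly computes $\operatorname{Tor}_{1}^{R}(k,a_\bullet)$, and converting this into $\Ext^{1}_{R}(k,a_\bullet)=0$ requires Koszul self-duality, which works here precisely because $R$ has two variables (so $\Ext^{1}$ and $\operatorname{Tor}_{1}$ are related by a plain degree shift); this is worth spelling out, since on $\mathbb{P}^{n}$ with $n>1$ the analogous reduction would fail. Second, your final step — that the graded $R$-module is a complete invariant for right-exact, direct-limit-preserving, coherence-preserving $k$-linear functors ${\sf Qcoh}\,\mathbb{P}^{1}\to{\sf Qcoh}\,\mathbb{P}^{0}$ — packages exactly the ``extend from line bundles to all coherents and then to ${\sf Qcoh}$'' construction that occupies the bulk of the paper's proof of Proposition \ref{propo.split}, so it is doing real work; it is standard, but the paper does not get it for free either. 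A fringe benefit of your route is that it appears not to use the algebraic-closedness of $k$, which the paper invokes in Lemma \ref{lemma.dim} through a determinant/Nullstellensatz argument; your top-boundedness argument via $\m$-torsion sidesteps this entirely.
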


We conclude the introduction by mentioning an application
of Theorem \ref{thm.main}(2).  In \cite{blowup}, Ingalls and
Patrick show that the blow-up of a noncommutative weighted
projective space is a noncommutative Hirzebruch surface in an
appropriate sense. More precisely, they show that the blow-up is a
projectivization of an exact functor $F:{\sf Qcoh }\mathbb{P}^{1}
\rightarrow {\sf Qcoh }\mathbb{P}^{1}$ which commutes with
direct limits.  It follows from Theorem \ref{thm.main}(2) that $F
\cong -\otimes_{\mathcal{O}_{\mathbb{P}^{1}}} \mathcal{F}$ where
$\mathcal{F}$ is a quasi-coherent
$\mathcal{O}_{\mathbb{P}^{1}\times \mathbb{P}^{1}}$-module.  This provides a crucial step in the proof that the noncommutative Hirzeburch surface Ingalls and Patrick construct is
a noncommutative ruled surface in the sense of \cite{ncruled}.
\vskip .2in
{\it An apology for including proofs that diagrams commute:}  This paper contains a number of ``technical" proofs that various
diagrams commute.  While some readers may frown upon the practice of including such proofs, we thought it wise to include them for the following reasons:

First, we are interested in proving a version of Theorem \ref{thm.main} in which $Y$ is a non-commutative space (see \cite[Section 1.2]{vblowup} for
the definition of quasi-scheme, which is what we mean by non-commutative space).  The proof of such a result will require the proof that diagrams similar to those
in this paper commute.  Since local arguments are often unavailable in the non-commutative setting, it will be important
to have a careful record of which proofs of commutativity can be reduced to arguments global on $Y$ (which should carry over without change to the non-commutative setting),
and which are local on $Y$ (which will have to be replaced by global arguments in the non-commutative setting).

Second, it is sometimes very difficult, even for extremely experienced mathematicians, to decide which diagrams
commute for elementary reasons and which commute for deeper reasons.  This fact is evidenced by the need for \cite{conrad} to fill gaps
in \cite{hartres}.  The gaps were not widely recognized as substantial until many years after the publication of \cite{hartres}.  Although the
diagrams appearing in this paper are far less complicated than those studied in \cite{conrad}, we felt it important to save the skeptical reader
from reconstructing the often tedious arguments on their own.
\vskip .2in
{\it Acknowledgements:}  I am grateful to S. Paul Smith for numerous helpful conversations,
for clarifying the proof of Proposition \ref{prop.wattone} and for allowing me to include
some of his results in Section \ref{section.structure}.  I am also grateful to Daniel Chan for showing me how to
generalize an earlier version of Theorem \ref{thm.main}(2).

Finally, I thank Quan Shui Wu for
hosting me at Fudan University during the 2006-2007 academic
year, during which parts of this paper were written.

\section{The Eilenberg-Watts Theorem} \label{section.wattstheorem}
The purpose of this section is to recall the naive generalization
of the Eilenberg-Watts Theorem that holds when the domain scheme is affine
(see Proposition \ref{prop.wattone} for a precise
formulation of this statement). The result is used implicitly in
\cite[Example 3.1.3]{ncruled}. We first recall the following
definition, which is invoked in the statement of Proposition
\ref{prop.wattone}.

\begin{defn}  Recall that $Z=\operatorname{Spec }k$, let $f:X \rightarrow Z$ denote the $k$-scheme structure
map for $X$ and let $g:Y \rightarrow Z$ denote the $k$-scheme structure map for $Y$.
An element $F \in {\sf Funct}({\sf Qcoh }X,{\sf Qcoh
}Y)$ is {\it $k$-linear} if the diagram
$$
\begin{CD}
k \times \Hom_{\mathcal{O}_{X}}(\M,\N) & \rightarrow & k \times \Hom_{\mathcal{O}_{Y}}(F\M,F\N) \\
@VVV @VVV \\
\Hom_{\mathcal{O}_{X}}(\M,\N) & \rightarrow &
\Hom_{\mathcal{O}_{Y}}(F\M,F\N)
\end{CD}
$$
whose horizontal arrows are induced by $F$, and whose vertical
arrows are induced by the $k$-module structure on $\Hom_{\mathcal{O}_{X}}(\M,\N)$ coming from global sections of the structure maps
$\mathcal{O}_{Z} \rightarrow f_{*}\mathcal{O}_{X}$ and
$\mathcal{O}_{Z} \rightarrow g_{*}\mathcal{O}_{Y}$ respectively,
commutes.
\end{defn}

\begin{prop}
\label{prop.wattone} \cite[Example 4.2]{ns} If $X$ is affine, then
the inclusion functor
$$
{\sf Qcoh}(X  \times  Y) \rightarrow {\sf Bimod}_{k}(X - Y)
$$
induced by the assignment $\F \mapsto
-\otimes_{\mathcal{O}_{X}}\F$ is an equivalence of categories.
\end{prop}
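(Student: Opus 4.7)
The plan is to reduce the assertion to the classical Eilenberg--Watts Theorem (Theorem \ref{thm.classical}) with its target enhanced from $\Mod S$ to ${\sf Qcoh }Y$. Write $X = \Spec R$, so ${\sf Qcoh }X \simeq \Mod R$ via $M \mapsto \widetilde M$. Under $\operatorname{pr}_{2*}$, quasi-coherent sheaves on $X \times_{Z} Y = \Spec R \times_{\Spec k} Y$ correspond bijectively to quasi-coherent $\mathcal{O}_{Y}$-modules equipped with a $k$-central left $R$-action, and under this identification $-\otimes_{\mathcal{O}_{X}}\F$ becomes the sectionwise tensor product $-\otimes_{R}\operatorname{pr}_{2*}\F$. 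It therefore suffices to show that sending such an $R$-$\mathcal{O}_{Y}$-bimodule $\F$ to the functor $-\otimes_{R}\operatorname{pr}_{2*}\F$ defines an equivalence onto ${\sf Bimod}_{k}(X-Y)$.

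To construct a quasi-inverse, given $F \in {\sf Bimod}_{k}(X-Y)$ I mimic the module-theoretic construction: set $\F := F(\widetilde R) \in {\sf Qcoh }Y$, and use the left multiplication maps $\phi_{r}:\widetilde R \to \widetilde R$ together with $k$-linearity of $F$ to obtain a ring homomorphism $R \to \End_{\mathcal{O}_{Y}}(\F)$. This action is $k$-central by construction, so $\F$ lifts to a quasi-coherent $\mathcal{O}_{X \times Y}$-module. I then define a natural transformation $\Gamma_{\widetilde M}:\widetilde M \otimes_{\mathcal{O}_{X}}\F \to F(\widetilde M)$ via $m \otimes s \mapsto F(\widetilde{\psi_{m}})(s)$, where $\psi_{m}:R \to M$ is right multiplication by $m \in M$. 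By construction $\Gamma_{\widetilde R}$ is the identity on $\F$; additivity promotes this to an isomorphism on finite free modules; commutation with direct limits gives isomorphisms on arbitrary free modules; and a free presentation of $M$ together with right exactness of both functors completes the argument by a five-lemma chase.

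Fully faithfulness of $\F \mapsto -\otimes_{\mathcal{O}_{X}}\F$ follows from the same right-exactness and direct-limit argument: any natural transformation $-\otimes_{\mathcal{O}_{X}}\F \to -\otimes_{\mathcal{O}_{X}}\G$ is determined by its component at $\widetilde R$, which is forced to be compatible with the induced $R$-actions and so corresponds to a morphism in ${\sf Qcoh}(X \times Y)$. Combined with the essential surjectivity from the preceding paragraph, this yields the equivalence.

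The main technical obstacle is verifying the identification of quasi-coherent sheaves on $X \times_{Z} Y$ with $R$-$\mathcal{O}_{Y}$-bimodules, and then checking that the $R$-action obtained from the functoriality of $F$ genuinely commutes with the $\mathcal{O}_{Y}$-structure on $F(\widetilde R)$ to yield a quasi-coherent sheaf on $X \times Y$. This is precisely where $k$-linearity of $F$ is indispensable, playing the same role that $k$-centrality of the bimodule plays in the classical Eilenberg--Watts theorem.
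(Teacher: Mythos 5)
Your proposal is correct and takes essentially the same route as the paper: both reduce to the classical Eilenberg--Watts construction by writing $X=\Spec R$, define $\F$ as $F(\mathcal{O}_X)$ equipped with the left $R$-action induced by $F(\mu_r)$, use $k$-linearity to get $k$-centrality and hence a quasi-coherent $\mathcal{O}_{X\times Y}$-module, and build the comparison map from the morphisms $\mu_m$. The only difference is presentational — you invoke the $\operatorname{pr}_{2*}$ equivalence for the affine projection up front, where the paper glues the sheaf from affine opens of $Y$ and refers the reader to \cite{ns} for full faithfulness — but the substance is identical.
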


Proposition \ref{prop.wattone} follows from a general form of
the Eilenberg-Watts Theorem \cite[Theorem 3.1]{ns} characterizing right exact functors $F:{\sf Mod }R
\rightarrow {\sf A}$ commuting with direct limits, where $R$ is a
ring, ${\sf Mod }R$ denotes the category of right $R$-modules and
$\sf A$ is an abelian category.

We recall the proof that the inclusion functor in Proposition
\ref{prop.wattone} is essentially surjective since we will invoke
it in the sequel.  We first construct an object, $\F$, of ${\sf Qcoh }(X
\times Y)$ whose image is isomorphic to $F \in {\sf Bimod}_{k}(X -
Y)$ as follows: Let $X = \operatorname{Spec }R$, and let $U
\subset Y$ be affine open.  We first define an $R \otimes_k
\mathcal{O}_{Y}(U)$-module, $N$.  We let $N$ have underlying set
and right-module structure equal to $F(\mathcal{O}_{X})(U)$.
We let
$$
\mu_{r} \in
\operatorname{Hom}_{\mathcal{O}_{X}}(\mathcal{O}_{X},\mathcal{O}_{X})
$$
correspond to multiplication by $r \in R \cong
\Gamma(X,\mathcal{O}_{X})$, and we give $N$ an $R$-module
structure by defining $r \cdot n := F(\mu_{r})(U) n$ for $n \in
F(\mathcal{O}_{X})(U)$. It remains to show that $N$ is
$k$-central, but this follows directly from the fact that $F$ is
$k$-linear. We conclude that $N$ is an $R \otimes_{k} \mathcal{O}_{Y}(U)$-module,
hence corresponds to a quasi-coherent $\mathcal{O}_{X \times
U}$-module, $\F_U$.  It is straightforward to check that the
sheaves $\F_U$ glue to give a quasi-coherent $\mathcal{O}_{X
 \times  Y}$-module, which we call $\mathcal{F}$.

We next construct an isomorphism $\Theta: -\otimes_{\mathcal{O}_{X}}\F
\rightarrow F$ as follows:  Let $\mathcal{M}$ be an
$\mathcal{O}_{X}$-module and let $U \subset Y$ be an affine open
subset.  We define a morphism
$\Theta_{\mathcal{M}}(U):\mathcal{M}\otimes_{\mathcal{O}_{X}} \F
(U) \rightarrow F\M(U)$.  To this end, we note that
\begin{eqnarray*}
\mathcal{M}\otimes_{\mathcal{O}_{X}} \F (U) & = & \operatorname{pr}_{2*}(\operatorname{pr}_{1}^{*} \M \otimes_{\mathcal{O}_{X  \times  Y}} \F)(U) \\
& = & (\operatorname{pr}_{1}^{*} \M \otimes_{\mathcal{O}_{X  \times  Y}} \F)(X  \times  U) \\
& \cong & \M(X) \otimes_{R} F(\mathcal{O}_{X})(U).
\end{eqnarray*}
Hence, in order to define $\Theta_{\mathcal{M}}(U)$, it suffices
to construct an $\mathcal{O}_{Y}(U)$-module map $w: \M(X)
\otimes_{R} F(\mathcal{O}_{X})(U) \rightarrow F\mathcal{M}(U)$.
This is constructed as in the proof of the Eilenberg-Watts Theorem, as
follows. Suppose $m \in \M(X)$, $n \in F(\mathcal{O}_{X})(U)$, $r
\in R$, and
$$
\mu_{m} \in
\operatorname{Hom}_{\mathcal{O}_{X}}(\mathcal{O}_{X},\M)
$$
corresponds to the homomorphism in
$\operatorname{Hom}_{R}(R,\mathcal{M}(X))$ sending $1$ to $m$.  Then
$$
F(\mu_{m}) \in \Hom_{\mathcal{O}_{Y}}(F(\mathcal{O}_{X}),F\M)
$$
and
\begin{eqnarray*}
F(\mu_{mr})(U)(n) & = & F(\mu_{m} \mu_{r})(U)(n) \\
& = & F(\mu_{m})(U) F(\mu_{r})(U) (n) \\
& = & F(\mu_{m})(U) (rn).
\end{eqnarray*}
Hence, the function $w(m \otimes n):=F(\mu_{m})(U)(n)$ extends to
a well defined homomorphism of $\mathcal{O}_{Y}(U)$-modules $w:
\M(X) \otimes_{R} F(\mathcal{O}_{X})(U) \rightarrow
F\mathcal{M}(U)$, which in turn corresponds to a map of
$\mathcal{O}_{Y}(U)$-modules
$\Theta_{\mathcal{M}}(U):\mathcal{M}\otimes_{\mathcal{O}_{X}} \F
(U) \rightarrow F\M(U)$.  It is straightforward to show that the
maps $\Theta_{\M}(U)$ glue to give a map of
$\mathcal{O}_{Y}$-modules
$$
\Theta_{\M}:\M \otimes_{\mathcal{O}_{X}}\F \rightarrow F\M
$$
and that $\Theta_{\M}$ is an isomorphism which is natural in $\M$.

In the sequel, we will often refer to $\Theta$ as the canonical
isomorphism in the proof of Proposition \ref{prop.wattone}.

\section{Basechange, the Projection Formula, and Compatibilities}
\label{section.basechange} Our construction of the Eilenberg-Watts functor
and our proof of Theorem \ref{thm.main} depends, in a fundamental way,
on the existence and properties of two canonical isomorphisms
which are constructed using basechange and the projection formula.
The purpose of this section is to describe these isomorphisms as
well as several fundamental compatibilities involving them.

Throughout this section, we let $U$ denote an affine scheme, we let $u:U \rightarrow X$ denote an open
immersion, we let $v=u \times \operatorname{id}_{Y}$, and
we let $p,q:U \times Y \rightarrow U,Y$ denote projections.

We begin with some preliminary observations. We note that the
diagram
$$
\begin{CD}
U \times Y & \overset{p}{\longrightarrow} & U \\
@V{v}VV @VV{u}V \\
X \times Y & \underset{\operatorname{pr}_{1}}{\longrightarrow} & X
\end{CD}
$$
is a fiber square.  We claim the basechange and projection formula
morphisms
$$
\operatorname{pr}_{1}^{*}u_{*} \longrightarrow
{v}_{*}{v}^{*}\operatorname{pr}_{1}^{*}u_{*}
\overset{\cong}{\longrightarrow} {v}_{*}p^{*}u^{*}u_{*}
\longrightarrow  {v}_{*}p^{*}
$$
and
$$
{v}_{*}- \otimes_{\mathcal{O}_{X \times Y}}- \rightarrow
{v}_{*}{v}^{*}({v}_{*}- \otimes_{\mathcal{O}_{X \times Y}}-)
\overset{\cong}{\rightarrow}  {v}_{*}({v}^{*}{v}_{*}-
\otimes_{\mathcal{O}_{U \times Y}}{v}^{*}-)  \rightarrow {v}_{*}(-
\otimes_{\mathcal{O}_{U \times Y}}{v}^{*}-)
$$
induced by unit and counit morphisms of $(u^{*},u_{*})$ and
$(v^{*},v_{*})$, and by the distributivity of pullbacks over tensor products, are isomorphisms.  To this end, we note that it
suffices to prove that they are isomorphisms over subsets of the
form $V \times W$ where $V \subset X$ and $W \subset Y$ are open
affine subsets. This reduces the claim to a straightforward affine
computation, which we omit.

Let $\mathcal{E} \in {\sf Qcoh}U \times Y$ and $\F \in {\sf Qcoh
}X \times Y$.  We define canonical isomorphisms
\begin{equation} \label{eqn.pullback2}
u^{*}(-) \otimes_{\mathcal{O}_{U}} \E  \longrightarrow -
\otimes_{\mathcal{O}_{X}} v_{*}\E
\end{equation}
and
\begin{equation} \label{eqn.pullback}
u_{*}(-) \otimes_{\mathcal{O}_{X}}\F \longrightarrow
-\otimes_{\mathcal{O}_{U}}v^{*}\F,
\end{equation}
natural in $\E$ and $\F$, as follows:  The map
(\ref{eqn.pullback2}) is defined to be the composition
\begin{eqnarray*}
u^{*}(-) \otimes_{\mathcal{O}_{U}}\E & \overset{=}{\longrightarrow} & q_{*}(p^{*}u^{*}- \otimes_{\mathcal{O}_{U \times Y}} \E) \\
& \overset{\cong}{\longrightarrow} & q_{*}(v^{*}\operatorname{pr}_{1}^{*}- \otimes_{\mathcal{O}_{U \times Y}} \E) \\
& \overset{=}{\longrightarrow} &
\operatorname{pr}_{2*}v_{*}(v^{*}\operatorname{pr}_{1}^{*}-
\otimes_{\mathcal{O}_{U \times Y}} \E) \\
& \overset{\cong}{\longrightarrow} &
\operatorname{pr}_{2*}(\operatorname{pr}_{1}^{*}-
\otimes_{\mathcal{O}_{X \times Y}}
v_{*}\E) \\
& \overset{=}{\longrightarrow} & -\otimes_{\mathcal{O}_{X}}
v_{*}\E
\end{eqnarray*}
where the second morphism comes from the equality
$\operatorname{pr}_{1} v = up$ and the fourth morphism is the
projection formula.

We define the map (\ref{eqn.pullback}) as the composition
\begin{eqnarray*}
u_{*}(-)\otimes_{\mathcal{O}_{X}}\F & \overset{=}{\longrightarrow} & \operatorname{pr}_{2*}(\operatorname{pr}_{1}^{*}u_{*}-\otimes_{\mathcal{O}_{X \times Y}} \mathcal{F}) \\
& \overset{\cong}{\longrightarrow} & \operatorname{pr}_{2*}({v}_{*}p^{*}- \otimes_{\mathcal{O}_{X \times Y}} \mathcal{F}) \\
& \overset{\cong}{\longrightarrow} & \operatorname{pr}_{2*}{v}_{*}(p^{*}-\otimes_{\mathcal{O}_{U \times Y}} {v}^{*}\mathcal{F}) \\
& \overset{=}{\longrightarrow} &
q_{*}(p^{*}-\otimes_{\mathcal{O}_{U \times Y}} {v}^{*}\mathcal{F})
\end{eqnarray*}
where the second morphism is basechange and the third morphism is
the projection formula.

Naturality of (\ref{eqn.pullback2}) and (\ref{eqn.pullback})
follows from naturality of basechange and the projection formula.

The remainder of this section is devoted to the proof that
(\ref{eqn.pullback2}) and (\ref{eqn.pullback}) satisfy three
compatibilities. The first says that (\ref{eqn.pullback2}) and
(\ref{eqn.pullback}) are compatible with the units and counits of
the adjoint pairs $(u^{*},u_{*})$ and $(v^{*},v_{*})$ (Lemma
\ref{lemma.tensorcomp}).  The second says that if $\widetilde{U}
\overset{\tilde{u}}{\longrightarrow} U$ is an open affine
immersion and $\tilde{v}=\tilde{u} \times \operatorname{id}_{Y}$,
then (\ref{eqn.pullback2}) and (\ref{eqn.pullback}) are compatible
with the canonical isomorphisms $(u \tilde{u})^{*} \cong
\tilde{u}^{*}u^{*}$ and $(v \tilde{v})^{*} \cong
\tilde{v}^{*}v^{*}$ (Lemma \ref{lemma.peel}). The third says that
(\ref{eqn.pullback2}) and (\ref{eqn.pullback}) are compatible with
affine basechange (Lemma
\ref{lemma.canonicalcompatwithbasechange}).

\begin{lemma} \label{lemma.tensorcomp}
Consider the following diagram
\begin{equation} \label{eqn.bigcompat0}
\begin{CD}
-\otimes_{\mathcal{O}_{X}}\F & \longrightarrow &
-\otimes_{\mathcal{O}_{X}} v_{*}v^{*}\F \\
@VVV @VVV \\
u_{*}u^{*}(-) \otimes_{\mathcal{O}_{X}} \F & \longrightarrow &
u^{*}(-)\otimes_{\mathcal{O}_{U}} v^{*}\F
\end{CD}
\end{equation}
whose top horizontal and left vertical are induced by unit
morphisms, whose right vertical is the inverse of
(\ref{eqn.pullback2}) and whose bottom horizontal is induced by
(\ref{eqn.pullback}).  Then this diagram commutes.

Similarly, consider the following diagram
\begin{equation} \label{eqn.bigcompat}
\begin{CD}
-\otimes_{\mathcal{O}_{U}}\E & \longleftarrow & -
\otimes_{\mathcal{O}_{U}}v^{*}v_{*}\E \\
@AAA @AAA \\
u^{*}u_{*}(-)\otimes_{\mathcal{O}_{U}}\E & \longleftarrow &
u_{*}(-) \otimes_{\mathcal{O}_{X}} v_{*}\E
\end{CD}
\end{equation}
whose top horizontal and left vertical are induced by counit
morphisms, whose bottom horizontal is induced by the inverse of
(\ref{eqn.pullback2}), and whose right vertical is
(\ref{eqn.pullback}).  Then this diagram commutes.
\end{lemma}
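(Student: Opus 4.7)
The plan is to prove both diagrams by pasting: substitute the explicit definitions of the canonical isomorphisms (\ref{eqn.pullback2}) and (\ref{eqn.pullback})---each a four-step composition of basechange, projection formula, and adjunction units or counits---into the sides of each square, and then verify that every elementary cell in the resulting enlarged pasting diagram commutes.

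For diagram (\ref{eqn.bigcompat0}), I would substitute the definition of (\ref{eqn.pullback}) into the bottom arrow (applied to $u^*(-)$) and the inverse of the definition of (\ref{eqn.pullback2}) (applied to $\E = v^*\F$) into the right vertical. The original square then becomes a rectangle that can be filled by inserting intermediate arrows corresponding to each constituent morphism. The cells that appear are of three types: (i) naturality squares for the unit $\operatorname{id} \to u_*u^*$ or $\operatorname{id} \to v_*v^*$ applied to a basechange or projection-formula map, which commute tautologically; (ii) cells obtained by applying $\operatorname{pr}_{2*}$ to an already-commutative diagram in ${\sf Qcoh}(X \times Y)$, which reduce the check one level down; and (iii) a core cell expressing the compatibility of the projection formula with the unit of $(v^*, v_*)$.

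For diagram (\ref{eqn.bigcompat}) the argument is formally dual, with the counits $u^*u_* \to \operatorname{id}$ and $v^*v_* \to \operatorname{id}$ in place of units, and with the roles of (\ref{eqn.pullback2}) and (\ref{eqn.pullback}) interchanged. I would carry both proofs in parallel so that the common structural cells need only be verified once.

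The main obstacle will be the core cells of type (iii). Because (\ref{eqn.pullback2}) and (\ref{eqn.pullback}) were themselves constructed from units, counits, basechange, and the projection formula, the commutativity of (\ref{eqn.bigcompat0}) and (\ref{eqn.bigcompat}) ultimately reduces to the triangle identities for the adjunctions $(u^*, u_*)$ and $(v^*, v_*)$ together with the naturality of basechange and the projection formula in each variable. Making this reduction completely explicit requires a careful enumeration of the elementary natural transformations involved; no deeper input is required, but the bookkeeping is precisely of the kind the introduction warns us not to skip.
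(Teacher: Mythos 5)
Your proposal follows the same strategy as the paper: it unfolds the definitions of the two canonical isomorphisms, reduces the enlarged square to elementary cells (naturality of units, naturality of basechange/projection formula, and a core cell expressing compatibility of the projection formula with the unit of $(v^*,v_*)$), and recognizes that the core cell ultimately rests on a triangle identity together with naturality of the monoidal distributivity morphism. This is precisely the paper's decomposition via (\ref{eqn.compat00}), (\ref{eqn.compat01}), (\ref{eqn.symmetrictensor1}), and (\ref{eqn.symmetrictensor2}), so the approaches agree.
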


\begin{proof}
We first show that (\ref{eqn.bigcompat0}) commutes. Consider the following diagram

\begin{equation} \label{eqn.compat00}
\begin{CD}
\operatorname{pr}_{1}^{*}(-) \otimes_{\mathcal{O}_{X \times Y}}\F
& \longrightarrow & \operatorname{pr}_{1}^{*}(-)
\otimes_{\mathcal{O}_{X \times Y}} v_{*}v^{*}\F  \\
@VVV @VVV \\
\operatorname{pr}_{1}^{*}u_{*}u^{*}(-) \otimes_{\mathcal{O}_{X
\times Y}}\F & & v_{*}(v^{*}\operatorname{pr}_{1}^{*}(-)
\otimes_{\mathcal{O}_{U \times Y}} v^{*}\F) \\
@VVV @VVV \\
v_{*}p^{*}u^{*}(-)\otimes_{\mathcal{O}_{X \times Y}}\F & \longrightarrow &
v_{*}(p^{*}u^{*}(-)\otimes_{\mathcal{O}_{U \times Y}}v^{*}\F)
\end{CD}
\end{equation}
whose top horizontal and upper-left vertical are induced by unit
morphisms, whose upper-right vertical and bottom horizontal are induced by the
projection formula, whose bottom-left vertical is basechange and
whose bottom-right vertical is canonical.  It suffices to show that this diagram commutes.

To this end, we consider the following diagram
\begin{equation} \label{eqn.smalldiagram}
\begin{CD}
\operatorname{pr}_{1}^{*} & \longrightarrow &
v_{*}v^{*}\operatorname{pr}_{1}^{*} \\
@VVV @VVV \\
\operatorname{pr}_{1}^{*}u_{*}u^{*} & & v_{*}p^{*}u^{*} \\
@VVV @VVV \\
v_{*}v^{*}\operatorname{pr}_{1}^{*}u_{*}u^{*} & \longrightarrow &
v_{*}p^{*}u^{*}u_{*}u^{*}
\end{CD}
\end{equation}
whose top horizontal, left verticals and bottom-right vertical
are induced by unit morphisms, and whose bottom horizontal and
upper-right vertical are canonical.  We claim that this diagram commutes.  The claim follows
by splitting (\ref{eqn.smalldiagram}) into two subdiagrams via the
morphism
$$
v_{*}v^{*}\operatorname{pr}_{1}^{*} \longrightarrow
v_{*}v^{*}\operatorname{pr}_{1}^{*}u_{*}u^{*}
$$
induced by the unit of $(u^{*},u_{*})$, and noticing that each
commutes by the naturality of the unit.

Next consider the following diagram
\begin{equation} \label{eqn.compat01}
\begin{CD}
\operatorname{pr}_{1}^{*}(-) \otimes_{\mathcal{O}_{X \times Y}}\F
& \longrightarrow & \operatorname{pr}_{1}^{*}(-)
\otimes_{\mathcal{O}_{X \times Y}}v_{*}v^{*}\F \\
@VVV @VVV \\
v_{*}v^{*}\operatorname{pr}_{1}^{*}(-) \otimes_{\mathcal{O}_{X
\times Y}} \F & \longrightarrow &
v_{*}(v^{*}\operatorname{pr}_{1}^{*}(-)\otimes_{\mathcal{O}_{U
\times Y}} v^{*}\F) \\
@VVV @VVV \\
v_{*}p^{*}u^{*}(-)\otimes_{\mathcal{O}_{X \times Y}} \F &
\longrightarrow & v_{*}(p^{*}u^{*}(-) \otimes_{\mathcal{O}_{U
\times Y}} v^{*}\F)
\end{CD}
\end{equation}
whose top horizontal and top-left vertical are induced by unit
morphisms, whose top-right vertical and middle and bottom
horizontals are induced by the projection formula, and whose bottom verticals
are canonical.  The claim implies that, in order to show (\ref{eqn.compat00})
commutes, it suffices to show that both squares of (\ref{eqn.compat01}) commute.  The bottom square of (\ref{eqn.compat01}) commutes by the
naturality of the projection formula.

We next prove that the top
square of (\ref{eqn.compat01}) commutes.  To this end, consider the following diagram
\begin{equation} \label{eqn.symmetrictensor1}
\begin{CD}
-\otimes_{\mathcal{O}_{X \times Y}}- & \longrightarrow &
-\otimes_{\mathcal{O}_{X \times Y}}v_{*}v^{*}(-) \\
@VVV @VVV \\
v_{*}v^{*}(-)\otimes_{\mathcal{O}_{X \times Y}}- & \longrightarrow
& v_{*}(v^{*}(-)\otimes_{\mathcal{O}_{U \times Y}} v^{*}(-))
\end{CD}
\end{equation}
whose right vertical and bottom horizontal are projection
formulas, and whose left vertical and top horizontal are induced
by units.  In order to prove that the top
square of (\ref{eqn.compat01}) commutes, it suffices to show that (\ref{eqn.symmetrictensor1}) commutes.  To prove this,
we note that the bottom route of (\ref{eqn.symmetrictensor1}) equals the bottom route in the diagram
\begin{equation} \label{eqn.symmetrictensor2}
\begin{CD}
-\otimes_{\mathcal{O}_{X \times Y}}- & \longrightarrow &
v_{*}v^{*}(-\otimes_{\mathcal{O}_{X \times Y}}-) & \longrightarrow
& v_{*}(v^{*}(-) \otimes_{\mathcal{O}_{U \times Y}} v^{*}(-)) \\
@VVV @VVV @AAA \\
v_{*}v^{*}-\otimes_{\mathcal{O}_{X \times Y}}- & \longrightarrow &
v_{*}v^{*}(v_{*}v^{*}-\otimes_{\mathcal{O}_{X \times Y}}-) &
\longrightarrow & v_{*}(v^{*}v_{*}v^{*}(-) \otimes_{\mathcal{O}_{U
\times Y}} v^{*}(-))
\end{CD}
\end{equation}
whose left arrows are unit morphisms, whose right horizontals are
induced by distributivity of pullbacks over tensor products
\begin{equation} \label{eqn.canonicalmorph}
v^{*}(-\otimes_{\mathcal{O}_{X \times Y}} -)
\overset{\cong}{\longrightarrow} v^{*}(-) \otimes_{\mathcal{O}_{U
\times Y}} v^{*}(-)
\end{equation}
and whose right horizontal is the counit morphism.  We claim that
(\ref{eqn.symmetrictensor2}) commutes.  It will follow from the claim
that the bottom route of (\ref{eqn.symmetrictensor1}) equals the composite
of the top horizontals in (\ref{eqn.symmetrictensor2}).
Similarly, the top route of (\ref{eqn.symmetrictensor1}) equals the composite
of the top horizontals in (\ref{eqn.symmetrictensor2}).  Therefore, the
commutativity of (\ref{eqn.symmetrictensor1}), and hence the commutativity of
(\ref{eqn.bigcompat0}) will follow from the commutativity of (\ref{eqn.symmetrictensor2}).
We establish this now.

The left square of (\ref{eqn.symmetrictensor2}) commutes by naturality of the unit
morphism.  To prove the right square of (\ref{eqn.symmetrictensor2}) commutes, we consider the following diagram
$$
\begin{CD}
v_{*}v^{*}(-\otimes_{\mathcal{O}_{X \times Y}}-) & \longrightarrow
& v_{*}(v^{*}(-) \otimes_{\mathcal{O}_{U \times Y}} v^{*}(-)) \\
@VVV @VVV  \\
v_{*}v^{*}(v_{*}v^{*}-\otimes_{\mathcal{O}_{X \times Y}}-) &
\longrightarrow & v_{*}(v^{*}v_{*}v^{*}(-) \otimes_{\mathcal{O}_{U
\times Y}} v^{*}(-))
\end{CD}
$$
whose verticals are induced by units and whose horizontals are
(\ref{eqn.canonicalmorph}).  This diagram commutes by naturality of
(\ref{eqn.canonicalmorph}).  It follows from this that the right square of (\ref{eqn.symmetrictensor2}) commutes as well.

The proof that (\ref{eqn.bigcompat}) commutes is similar to the
proof that (\ref{eqn.bigcompat0}) commutes, and we omit it.
\end{proof}

\begin{lemma} \label{lemma.peel}
Suppose $\widetilde{U} \subset U$ are open affine subschemes of
$X$, with inclusion morphisms $\tilde{u}:\widetilde{U} \rightarrow
U$ and $u:U \rightarrow X$. Let $\tilde{v}=\tilde{u} \times
\operatorname{id}_{Y}$, let $v=u \times \operatorname{id}_{Y}$,
and let $\F$ be an object of ${\sf Qcoh }X \times Y$. Consider the following diagram
\begin{equation} \label{eqn.peel1}
\begin{CD}
-\otimes_{\mathcal{O}_{\widetilde{U}}}\tilde{v}^{*}v^{*}\F &
\longrightarrow & \tilde{u}_{*}(-)\otimes_{\mathcal{O}_{U}}v^{*}\F
\\
@VVV @VVV \\
-\otimes_{\mathcal{O}_{\widetilde{U}}}(v\tilde{v})^{*}\F &
\longrightarrow & (u \tilde{u})_{*}(-) \otimes_{\mathcal{O}_{X}}\F
\end{CD}
\end{equation}
whose horizontals and right vertical are induced by the inverse of
(\ref{eqn.pullback}) and whose left vertical is induced by the
canonical isomorphism $\tilde{v}^{*}v^{*}
\overset{\cong}{\rightarrow} (v \tilde{v})^{*}$.  Then this diagram commutes.

Similarly, consider the following diagram
\begin{equation} \label{eqn.peel2}
\begin{CD}
-\otimes_{\mathcal{O}_{X}}v_{*}\tilde{v}_{*}\F & \longrightarrow &
u^{*}(-)\otimes_{\mathcal{O}_{U}}\tilde{v}_{*}\F \\
@VVV @VVV \\
(u\tilde{u})^{*}(-)\otimes_{\mathcal{O}_{\widetilde{U}}}\F &
\overset{\cong}{\longrightarrow} &
\tilde{u}^{*}u^{*}(-)\otimes_{\mathcal{O}_{\widetilde{U}}}\F
\end{CD}
\end{equation}
whose top horizontal and verticals are induced by the inverse of
(\ref{eqn.pullback2}), and whose bottom horizontal is induced by
the canonical isomorphism $(u \tilde{u})^{*} \cong
\tilde{u}^{*}u^{*}$.  Then this diagram commutes.
\end{lemma}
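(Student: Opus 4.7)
The plan is to imitate the strategy used in Lemma \ref{lemma.tensorcomp}: expand each instance of (\ref{eqn.pullback}) (resp. (\ref{eqn.pullback2})) appearing in the diagrams (\ref{eqn.peel1}) and (\ref{eqn.peel2}) into its defining composition of basechange, projection formula, and identification morphisms, and then split the resulting large diagram into subdiagrams that commute either by naturality or by standard compatibilities of basechange and the projection formula with composition of morphisms.

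For (\ref{eqn.peel1}), first introduce the projections $\tilde{p}:\widetilde{U}\times Y \rightarrow \widetilde{U}$ and $\tilde{q}:\widetilde{U}\times Y \rightarrow Y$, and record the equality $(u\tilde{u})\times \operatorname{id}_{Y}=v\tilde{v}$. Each of the three occurrences of (\ref{eqn.pullback}) in (\ref{eqn.peel1}) unfolds as the composition $\operatorname{pr}_{1}^{*}(\text{push})\otimes\F \rightarrow (\text{push})p^{*}\otimes\F \rightarrow (\text{push})(p^{*}\otimes v^{*}\F)$, with the first step being basechange and the second the projection formula. After expanding, the hexagon (\ref{eqn.peel1}) decomposes into two rectangular subdiagrams that I plan to treat separately. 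The basechange rectangle asserts that the basechange isomorphism
$$
\operatorname{pr}_{1}^{*}(u\tilde{u})_{*} \overset{\cong}{\longrightarrow} (v\tilde{v})_{*}(p\tilde{p})^{*}
$$
associated to the outer fibre square equals the composition of the basechange isomorphisms for the two stacked fibre squares; this is the standard pasting compatibility of basechange, and by the same reduction used to prove that the basechange morphism is an isomorphism, it reduces to a direct affine verification on opens of the form $V\times W$. The projection formula rectangle asserts that the projection formula for the composition $v\tilde{v}$ agrees with the composition of the projection formulas for $\tilde{v}$ and $v$, together with the canonical isomorphism $\tilde{v}^{*}v^{*} \overset{\cong}{\rightarrow} (v\tilde{v})^{*}$; this likewise reduces to an affine computation.

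The proof of (\ref{eqn.peel2}) is entirely analogous. Expanding each occurrence of (\ref{eqn.pullback2}) via the defining composition $u^{*}(-)\otimes\E \overset{=}{\rightarrow} q_{*}(p^{*}u^{*}-\otimes\E) \rightarrow q_{*}(v^{*}\operatorname{pr}_{1}^{*}-\otimes\E) = \operatorname{pr}_{2*}v_{*}(v^{*}\operatorname{pr}_{1}^{*}-\otimes\E) \rightarrow \operatorname{pr}_{2*}(\operatorname{pr}_{1}^{*}-\otimes v_{*}\E)$, diagram (\ref{eqn.peel2}) splits into (i) a rectangle expressing the compatibility of the identifications $\operatorname{pr}_{1}v=up$ and $pv\tilde{v}=\tilde{p}\tilde{u}^{*}u^{*}$ with the pseudofunctoriality isomorphism $\tilde{u}^{*}u^{*}\cong (u\tilde{u})^{*}$, and (ii) a rectangle expressing the compatibility of the projection formula with composition, dual to the one used for (\ref{eqn.peel1}). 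Both again reduce to affine verifications.

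The main obstacle is not depth but bookkeeping: inserting the correct intermediate objects so that the hexagon really decomposes into the two stated rectangles, and organizing the cell-by-cell commutativity so that each cell is either a naturality square or one of the two standard pseudofunctoriality/composition compatibilities for basechange and the projection formula. Once the decomposition is in place, each cell commutes by an argument that is either formal (naturality) or a direct affine computation of the same type as those invoked at the beginning of Section \ref{section.basechange}.
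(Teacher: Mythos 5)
Your proposal is correct and follows essentially the same strategy as the paper: expand each occurrence of (\ref{eqn.pullback}) (resp. (\ref{eqn.pullback2})) into its basechange and projection-formula components, decompose the resulting diagram cell by cell, and commute each cell by naturality or by the pasting compatibility of basechange and of the projection formula with composition, reducing the latter to affine verifications exactly as the paper does with diagrams (\ref{eqn.newbasechange}) and (\ref{eqn.newproj}). The only minor divergence is a slight overcount in calling it "two rectangles" and a couple of notational slips (e.g.\ $(p\tilde p)^*$ should read $\tilde p^*$, and the identity $p\tilde v=\tilde u\tilde p$ is garbled), but you subsequently acknowledge that the decomposition will also produce naturality squares, so the substance matches the paper's argument.
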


\begin{proof}
Let $p,q:U \times Y \rightarrow U,Y$ and $\tilde{p}, \tilde{q}:\widetilde{U}
\times Y \rightarrow \widetilde{U},Y$ denote projections.  Consider the diagram
\begin{equation} \label{eqn.peel}
\begin{CD}
\operatorname{pr}_{1}^{*}(u
\tilde{u})_{*}(-)\otimes_{\mathcal{O}_{X \times Y}}\F &
\longrightarrow &
v_{*}p^{*}\tilde{u}_{*}(-)\otimes_{\mathcal{O}_{X \times Y}}\F &
\longrightarrow &
v_{*}(p^{*}\tilde{u}_{*}(-)\otimes_{\mathcal{O}_{U
\times Y}}v^{*}\F) \\
@VVV @VVV @VVV \\
(v \tilde{v})_{*}\tilde{p}^{*}(-)\otimes_{\mathcal{O}_{X \times
Y}}\F & \overset{=}{\longrightarrow} & v_{*}
\tilde{v}_{*}\tilde{p}^{*}(-)\otimes_{\mathcal{O}_{X \times Y}}\F
& \longrightarrow & v_{*}(
\tilde{v}_{*}\tilde{p}^{*}(-)\otimes_{\mathcal{O}_{U \times
Y}}v^{*}\F) \\
@VVV & & @VVV \\
(v
\tilde{v})_{*}(\tilde{p}^{*}(-)\otimes_{\mathcal{O}_{\widetilde{U}
\times Y}}(v \tilde{v})^{*}\F) & @>>> &
v_{*}\tilde{v}_{*}(\tilde{p}^{*}(-)\otimes_{\mathcal{O}_{\widetilde{U}
\times Y}}\tilde{v}^{*}v^{*}\F)
\end{CD}
\end{equation}
whose top-left horizontal and top verticals are induced by
basechange, whose top-right horizontal, middle-right horizontal
and bottom verticals are induced by the projection formula, and
whose bottom isomorphism is induced by the canonical isomorphism
$(v\tilde{v})^{*} \overset{\cong}{\longrightarrow}
\tilde{v}^{*}v^{*}$.  Since $q=\operatorname{pr}_{2} v$
and $\tilde{q}=\operatorname{pr}_{2} v \tilde{v}$, in order
to prove (\ref{eqn.peel1}) commutes, it suffices to show that (\ref{eqn.peel}) commutes.

The upper-right square of (\ref{eqn.peel}) commutes by the
naturality of the projection formula.  The fact that the
upper-left square of (\ref{eqn.peel}) commutes follows from the
commutativity of the diagram
\begin{equation} \label{eqn.newbasechange}
\begin{CD}
\operatorname{pr}_{1}^{*}(u\tilde{u})_{*} &
\overset{=}{\longrightarrow} &
\operatorname{pr}_{1}^{*}u_{*}\tilde{u}_{*} \\
@V{\cong}VV @VV{\cong}V \\
(v\tilde{v})_{*}\tilde{p}^{*} & \underset{\cong}{\longrightarrow}
& v_{*}p^{*}\tilde{u}_{*}
\end{CD}
\end{equation}
whose non-trivial isomorphisms are induced by basechange.  The
commutativity of (\ref{eqn.newbasechange}) can be checked affine
locally and we omit the routine verification.

The commutativity of the bottom rectangle of (\ref{eqn.peel})
follows from the commutativity of the diagram
\begin{equation} \label{eqn.newproj}
\begin{CD}
(v\tilde{v})_{*}(-)\otimes_{\mathcal{O}_{X \times Y}}\F &
\overset{\cong}{\longrightarrow} &
v_{*}(\tilde{v}_{*}(-)\otimes_{\mathcal{O}_{U \times Y}}v^{*}\F)
\\
@V{\cong}VV @VV{\cong}V \\
(v \tilde{v})_{*}((-) \otimes_{\mathcal{O}_{\widetilde{U} \times
Y}}(v\tilde{v})^{*}\F) & \underset{\cong}{\longrightarrow} &
v_{*}\tilde{v}_{*}((-)\otimes_{\mathcal{O}_{\widetilde{U} \times
Y}}\tilde{v}^{*}v^{*}\F)
\end{CD}
\end{equation}
whose bottom horizontal is induced by the canonical isomorphism
$(v\tilde{v})^{*} \overset{\cong}{\longrightarrow}
\tilde{v}^{*}v^{*}$ and whose other arrows are induced by the
projection formula.  The commutativity of (\ref{eqn.newproj}) again
follows from a routine affine computation, which we omit.

The proof that (\ref{eqn.peel2}) commutes is similar and may be
reduced to the commutativity of a diagram of the form
(\ref{eqn.newproj}) as well. We leave the details to the reader.
\end{proof}

\begin{lemma} \label{lemma.canonicalcompatwithbasechange}
Let $U_{1}, U_{2} \subset X$ be affine open subschemes, let
$U_{12}:=U_{1} \cap U_{2}$ with inclusions
$$
\begin{CD}
U_{12} & \overset{u_{12}^{1}}{\longrightarrow} & U_{1}
\\
@V{u_{12}^{2}}VV @VV{u_{1}}V \\
U_{2} & \underset{u_{2}}{\longrightarrow} & X.
\end{CD}
$$
For $i=1,2$, let $v_{i}=u_{i} \times \operatorname{id}_{Y}$ and
let $v_{12}^{i}=u_{12}^{i} \times \operatorname{id}_{Y}$.  Let $\E$
be an object of ${\sf Qcoh }U_{1} \times Y$, and consider the diagram
$$
\begin{CD}
u_{1}^{*}u_{2*}(-)\otimes_{\mathcal{O}_{U_{1}}}\E &
\longrightarrow &
u_{12*}^{1}u_{12}^{2*}(-)\otimes_{\mathcal{O}_{U_{1}}}\E \\
@AAA @VVV \\
u_{2*}(-)\otimes_{\mathcal{O}_{X}}v_{1*}\E & &
u_{12}^{2*}(-)\otimes_{\mathcal{O}_{U_{12}}} v_{12}^{1*}\E \\
@AAA @VVV \\
-\otimes_{\mathcal{O}_{U_{2}}}v_{2}^{*}v_{1*}\E & \longrightarrow
& -\otimes_{\mathcal{O}_{U_{2}}}v_{12*}^{2}v_{12}^{1*}\E
\end{CD}
$$
whose horizontals are induced by basechange, and whose verticals
are induced by (\ref{eqn.pullback2}) and (\ref{eqn.pullback}).  Then this diagram commutes.
\end{lemma}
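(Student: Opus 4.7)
The plan is to unpack the hexagon using the definitions of the canonical isomorphisms (\ref{eqn.pullback2}) and (\ref{eqn.pullback}). Each of these is built as a composition of a basechange morphism with a projection-formula morphism, so after expansion the two routes around the hexagon become two paths through a substantially larger diagram. The commutativity should then reduce to two ingredients: (i) the naturality of basechange and of the projection formula, and (ii) the compatibility of each of these with the ``stacking'' of fiber squares, the latter of which may be verified by an affine computation of exactly the sort appearing in (\ref{eqn.newbasechange}) and (\ref{eqn.newproj}) in the proof of Lemma \ref{lemma.peel}.

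Concretely, I would proceed in three steps. First, rewrite the left column of the hexagon: the upper vertical, which is the inverse of (\ref{eqn.pullback2}) applied with $u = u_{1}$ acting on $u_{2*}(-)$ and the sheaf $\E$, decomposes as the inverse of basechange along the square $U_{1} \times Y \to U_{1}$, $X \times Y \to X$ followed by the inverse of the projection formula for $v_{1}$; the lower vertical, which is (\ref{eqn.pullback}) applied with $u = u_{2}$ to the sheaf $v_{1*}\E$, decomposes dually as basechange for $u_{2}$ followed by the projection formula for $v_{2}$. Second, rewrite the right column in the same way, using (\ref{eqn.pullback}) for $u_{12}^{1}$ followed by (\ref{eqn.pullback2}) for $u_{12}^{2}$. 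Third, identify the expanded diagram with a diagram living over the cube
$$
\begin{CD}
U_{12} \times Y @>{v_{12}^{2}}>> U_{2} \times Y \\
@V{v_{12}^{1}}VV @VV{v_{2}}V \\
U_{1} \times Y @>>{v_{1}}> X \times Y
\end{CD}
$$
(a fiber square, since each $v$ is obtained from the corresponding $u$ by base-change along $Y$), and tile it with subdiagrams of the two types listed above.

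The main obstacle will be the diagrammatic bookkeeping: one must insert the right intermediate objects---something of the form $v_{2*}(v_{2}^{*}\operatorname{pr}_{1}^{*}u_{2*}(-) \otimes_{\mathcal{O}_{U_{2} \times Y}} v_{2}^{*}v_{1*}\E)$ and its basechange back down to $U_{12}$, for instance---so that every face of the expanded diagram is manifestly either a naturality square for basechange or for the projection formula, or else an instance of the compatibility of basechange (respectively the projection formula) with stacked fiber squares. Once this decomposition is in place, the naturality faces commute automatically, and the remaining faces reduce to routine affine verifications analogous to those omitted in the proofs of Lemmas \ref{lemma.tensorcomp} and \ref{lemma.peel}; we would likewise omit them in the final writeup.
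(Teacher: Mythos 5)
Your approach is correct in outline but takes a genuinely different route from the paper's. The paper does not unpack the isomorphisms (\ref{eqn.pullback2}) and (\ref{eqn.pullback}) into their basechange and projection-formula constituents. Instead it inserts unit and counit morphisms (via the intermediate diagrams (\ref{eqn.dig1}) and (\ref{eqn.dig2})) and reduces the hexagon to four small subdiagrams: (\ref{eqn.sub4}) is trivial, (\ref{eqn.sub1}) is an elementary basechange/counit exercise, (\ref{eqn.sub2}) commutes by Lemma \ref{lemma.peel}, and (\ref{eqn.sub3}) commutes by Lemma \ref{lemma.tensorcomp}. This reuses the compatibilities already in hand and keeps new affine verification to a minimum. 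Your plan, by contrast, directly expands the verticals and tiles the blown-up diagram with naturality squares together with compatibility squares for the fiber square $v_1 v_{12}^1 = v_2 v_{12}^2$, the latter checked affine locally; this would work, but you would in effect be reproving, for the present square, analogues of (\ref{eqn.newbasechange}) and (\ref{eqn.newproj}) rather than invoking what Section \ref{section.basechange} already established, so your writeup would be longer and would lean harder on omitted affine computations. One small slip worth correcting: the lower-left vertical goes from $-\otimes_{\mathcal{O}_{U_{2}}}v_{2}^{*}v_{1*}\E$ upward to $u_{2*}(-)\otimes_{\mathcal{O}_{X}}v_{1*}\E$, so it is the \emph{inverse} of (\ref{eqn.pullback}) and hence decomposes as the inverse of the projection-formula step followed by the inverse of the basechange step, not the other way around as you state; this does not affect the viability of the plan.
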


\begin{proof}
Consider the following diagram
\begin{equation} \label{eqn.dig1}
\begin{CD}
u_{2*}(-)\otimes_{\mathcal{O}_{X}}v_{1*}v_{12*}^{1}v_{12}^{1*}\E &
\longrightarrow &
u_{1}^{*}u_{2*}(-)\otimes_{\mathcal{O}_{U_{1}}}v_{12*}^{1}v_{12}^{1*}\E
& \longrightarrow &
u_{12}^{1*}u_{1}^{*}u_{2*}(-)\otimes_{\mathcal{O}_{U_{12}}}v_{12}^{1*}\E
\\
@AAA & & @VVV \\
-\otimes_{\mathcal{O}_{U_{2}}}v_{2}^{*}v_{1*}v_{12*}^{1}v_{12}^{1*}\E
& & & &
u_{12}^{2*}u_{2}^{*}u_{2*}(-)\otimes_{\mathcal{O}_{U_{12}}}
v_{12}^{1*}\E \\
@AAA & & @VVV \\
-\otimes_{\mathcal{O}_{U_{2}}}v_{2}^{*}v_{1*}\E & \longrightarrow
& -\otimes_{\mathcal{O}_{U_{2}}} v_{12*}^{2}v_{12}^{1*}\E &
\longleftarrow & u_{2}^{*}u_{2*}(-)
\otimes_{\mathcal{O}_{U_{2}}}v_{12*}^{2}v_{12}^{1*}\E
\end{CD}
\end{equation}
whose top horizontals and bottom-right vertical are induced by
(\ref{eqn.pullback2}), whose bottom-left horizontal is
induced by basechange, whose bottom-right horizontal is induced by
a counit, whose bottom-left vertical is induced by a unit, whose
top-left vertical is induced by the inverse of (\ref{eqn.pullback}), and whose
top-right vertical is canonical.  By the naturality of units, counits and the morphisms
(\ref{eqn.pullback2}) and (\ref{eqn.pullback}), and by the
commutativity of (\ref{eqn.bigcompat0}), it suffices to prove that (\ref{eqn.dig1}) commutes.  To this end,
we consider the diagram
$$
\begin{CD}
-\otimes_{\mathcal{O}_{U_{2}}}v_{2}^{*}v_{1*}v_{12*}^{1}v_{12}^{1*}\E
& \longleftarrow &
-\otimes_{\mathcal{O}_{U_{2}}}v_{12*}^{2}v_{12}^{1*}v_{12*}^{1}v_{12}^{1*}\E
\\
@AAA @AAA \\
-\otimes_{\mathcal{O}_{U_{2}}}v_{2}^{*}v_{1*}\E & \longleftarrow &
-\otimes_{\mathcal{O}_{U_{2}}}v_{12*}^{2}v_{12}^{1*}\E
\end{CD}
$$
whose verticals are induced by units and whose horizontals are
induced by basechange.  By the naturality of
basechange, this diagram commutes.  Hence, to prove that
(\ref{eqn.dig1}) commutes, it suffices to prove that if $\F : =
v_{12}^{1*}\E$, then the following diagram
\begin{equation} \label{eqn.dig2}
\begin{CD}
u_{2*}(-)\otimes_{\mathcal{O}_{X}}v_{1*}v_{12*}^{1}\F &
\longrightarrow &
u_{1}^{*}u_{2*}(-)\otimes_{\mathcal{O}_{U_{1}}}v_{12*}^{1}\F &
\longrightarrow &
u_{12}^{1*}u_{1}^{*}u_{2*}(-)\otimes_{\mathcal{O}_{U_{12}}}\F
\\
@AAA & & @VVV \\
-\otimes_{\mathcal{O}_{U_{2}}}v_{2}^{*}v_{1*}v_{12*}^{1}\F & & & &
u_{12}^{2*}u_{2}^{*}u_{2*}(-)\otimes_{\mathcal{O}_{U_{12}}}
\F \\
@AAA & & @VVV \\
-\otimes_{\mathcal{O}_{U_{2}}}v_{12*}^{2}v_{12}^{1*}v_{12*}^{1}\F
& \longrightarrow & -\otimes_{\mathcal{O}_{U_{2}}} v_{12*}^{2}\F &
\longleftarrow & u_{2}^{*}u_{2*}(-)
\otimes_{\mathcal{O}_{U_{2}}}v_{12*}^{2} \F
\end{CD}
\end{equation}
whose bottom-left vertical is induced by basechange, whose
bottom-left horizontal is induced by a counit, and whose other
maps are identical to the maps in (\ref{eqn.dig1}), commutes.

We complete the proof by showing that the diagram (\ref{eqn.dig2}) commutes.
To this end, we note that (\ref{eqn.dig2}) can be broken into the following four
subdiagrams: the diagram
\begin{equation} \label{eqn.sub1}
\begin{CD}
-\otimes_{\mathcal{O}_{U_{2}}}v_{2}^{*}v_{1*}v_{12*}^{1}\F &
\overset{=}{\longrightarrow} &
-\otimes_{\mathcal{O}_{U_{2}}}v_{2}^{*}v_{2*}v_{12*}^{2}\F \\
@VVV @VVV \\
-\otimes_{\mathcal{O}_{U_{2}}}v_{12*}^{2}v_{12}^{1*}v_{12*}^{1}\F
& \longrightarrow & -\otimes_{\mathcal{O}_{U_{2}}}v_{12*}^{2}\F
\end{CD}
\end{equation}
whose left vertical is induced by basechange and whose right
vertical and bottom horizontal are counits, the diagram
\begin{equation} \label{eqn.sub2}
\begin{CD}
u_{2*}(-)\otimes_{\mathcal{O}_{X}}v_{1*}v_{12*}^{1}\F &
\longrightarrow &
u_{1}^{*}u_{2*}(-)\otimes_{\mathcal{O}_{U_{1}}}v_{12*}^{1}\F &
\longrightarrow &
u_{12}^{1*}u_{1}^{*}u_{2*}(-)\otimes_{\mathcal{O}_{U_{12}}}\F \\
@V{=}VV & & @VV{\cong}V \\
u_{2*}(-)\otimes_{\mathcal{O}_{X}}v_{2*}v_{12*}^{2}\F &
\longrightarrow & u_{2}^{*}u_{2*}(-)\otimes_{\mathcal{O}_{U_{2}}}
v_{12*}^{2}\F & \longrightarrow &
u_{12}^{2*}u_{2}^{*}u_{2*}(-)\otimes_{\mathcal{O}_{U_{12}}} \F
\end{CD}
\end{equation}
whose horizontals are induced by the inverse of (\ref{eqn.pullback2}) and whose
right vertical is canonical, the diagram
\begin{equation} \label{eqn.sub3}
\begin{CD}
-\otimes_{\mathcal{O}_{U_{2}}}v_{2}^{*}v_{2*}v_{12*}^{2}\F &
\longleftarrow & u_{2*}(-)\otimes_{\mathcal{O}_{X}}
v_{2*}v_{12*}^{2}\F \\
@VVV @VVV \\
-\otimes_{\mathcal{O}_{U_{2}}}v_{12*}^{2}\F & \longleftarrow &
u_{2}^{*}u_{2*}(-)\otimes_{\mathcal{O}_{U_{2}}}v_{12*}^{2}\F
\end{CD}
\end{equation}
whose top horizontal is (\ref{eqn.pullback}), whose right vertical
is (\ref{eqn.pullback2}) and whose other arrows are counits, and the diagram
\begin{equation} \label{eqn.sub4}
\begin{CD}
u_{2*}(-)\otimes_{\mathcal{O}_{X}}v_{1*}v_{12*}^{1}\F &
\overset{=}{\longrightarrow} &
u_{2*}(-)\otimes_{\mathcal{O}_{X}}v_{2*}v_{12*}^{2}\F \\
@AAA @AAA \\
-\otimes_{\mathcal{O}_{U_{2}}}v_{2}^{*}v_{1*}v_{12*}^{1}\F &
\overset{=}{\longrightarrow} &
-\otimes_{\mathcal{O}_{U_{2}}}v_{2}^{*}v_{2*}v_{12*}^{2}\F
\end{CD}
\end{equation}
whose verticals are induced by the inverse of (\ref{eqn.pullback}).  It suffices
to show that these subdiagrams commute.  The fact that diagram
(\ref{eqn.sub1}) commutes is left as an exercise to the reader.
The fact that diagram (\ref{eqn.sub2}) commutes follows from Lemma
\ref{lemma.peel}. The fact that diagram (\ref{eqn.sub3}) commutes
follows from Lemma \ref{lemma.tensorcomp}, and the commutativity of
(\ref{eqn.sub4}) is trivial.
\end{proof}

\section{Totally Global Functors} \label{section.totallyglobal}
Our goal in this section is to define and study elementary properties of {\it totally
global functors}.

\begin{defn} \label{defn.totallyglobal}
We say $F \in {\sf Funct}_{k}({\sf Qcoh }X,{\sf Qcoh }Y)$ is {\it
totally global} if for any open immersion $u:U
\longrightarrow X$ with $U$ affine, $Fu_{*}=0$.
\end{defn}

We note that this definition makes sense.  For, $u: U \rightarrow
X$ is an affine morphism since $X$ is separated \cite[II, ex. 4.3]{hart}, so
that $u$ is quasi-compact and separated \cite[II, ex.
5.17b]{hart}.  Hence, $u_{*}$ takes quasi-coherent $\mathcal{O}_{X}$-modules to
quasi-coherent $\mathcal{O}_{X}$-modules \cite[II, Prop.
5.8c]{hart}.

The following lemma explains the motivation behind the use of the
term {\it totally global}.

\begin{lemma} \label{lemma.totglob1}
Suppose $X$ is noetherian.  If $F$ is totally global and $\M$ is a
quasi-coherent $\mathcal{O}_{X}$-module whose support lies in an
affine open subset $U$ of $X$ (included via $u$), then $F\M=0$.
\end{lemma}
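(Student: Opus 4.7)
The plan is to show that the adjunction unit $\eta:\M \to u_*u^*\M$ is an isomorphism in ${\sf Qcoh }X$; once this is in hand, applying the functor $F$ yields
$$
F\M \cong Fu_*u^*\M = 0,
$$
where the last equality holds by the definition of totally global, since $u:U \to X$ is an open immersion from an affine scheme.  Note that this strategy does not use any assumption that $F$ preserves direct limits: the direct-limit manipulations all take place inside ${\sf Qcoh }X$, and only the resulting sheaf isomorphism is pushed through $F$.

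First I would reduce to the coherent case.  Since $X$ is noetherian, $\M = \dlim_\alpha \M_\alpha$ where $\M_\alpha$ ranges over the coherent subsheaves of $\M$.  Each $\M_\alpha$ satisfies $\operatorname{supp}\M_\alpha \subseteq \operatorname{supp}\M \subseteq U$, and coherence on a noetherian scheme forces $\operatorname{supp}\M_\alpha$ to be closed in $X$.

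Next I would verify that $\eta_\alpha:\M_\alpha \to u_*u^*\M_\alpha$ is an isomorphism by a stalk check.  At $x \in U$ this is immediate since $u$ is an open immersion.  At $x \notin U$ the left stalk vanishes by the support hypothesis.  The right stalk equals $\dlim_{V \ni x}\M_\alpha(V \cap U)$; because $\operatorname{supp}\M_\alpha$ is closed in $X$ and does not contain $x$, the open neighborhoods $V$ of $x$ disjoint from $\operatorname{supp}\M_\alpha$ form a cofinal system, and for any such $V$ one has $\M_\alpha|_V = 0$, hence $\M_\alpha(V \cap U) = 0$.

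Finally I would assemble the pieces.  The functor $u^*$ preserves direct limits as a left adjoint, while $u_*$ preserves direct limits of quasi-coherent sheaves because $u$ is quasi-compact (every open subset of a noetherian scheme is quasi-compact) and separated.  Combining these facts with the naturality of $\eta$ gives
$$
\M \cong \dlim_\alpha \M_\alpha \cong \dlim_\alpha u_*u^*\M_\alpha \cong u_*u^*\dlim_\alpha \M_\alpha = u_*u^*\M,
$$
and the lemma follows.  The main subtlety is the cofinality step in the stalk computation, which is exactly where noetherianity enters through the closedness of supports of coherent sheaves; the remaining steps are formal manipulations of adjoint functors.
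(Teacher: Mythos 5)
Your proof is correct and takes a genuinely different route from the paper's. The paper first reduces to coherent $\M$ by invoking the hypothesis that $F$ commutes with direct limits and $X$ is noetherian; then, for coherent $\M$, it lets $i:\operatorname{Supp}\M \rightarrow X$ be the closed immersion of the (scheme-theoretic) support, observes that the unit $\M \rightarrow i_*i^*\M$ is an isomorphism, and factors $i = u i'$ to get $F\M \cong Fi_*i^*\M = Fu_*i'_*i^*\M = 0$. You instead establish the isomorphism $\M \cong u_*u^*\M$ entirely inside ${\sf Qcoh}\,X$ by writing $\M$ as a filtered colimit of coherent subsheaves, checking the unit is a stalkwise isomorphism (which is where closedness of supports of coherent sheaves, hence noetherianity, enters), and then passing to the limit using that $u_*$ and $u^*$ preserve filtered colimits; only at the very end do you apply $F$. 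What your argument buys is precisely what you pointed out: nothing about $F$ is used beyond total globality itself, whereas the paper's proof silently uses that $F$ preserves direct limits, a hypothesis that is not part of Definition \ref{defn.totallyglobal} or the statement of the lemma. What the paper's route buys is brevity and avoidance of the stalk/cofinality computation, by appealing instead to the well-known fact that the counit of a closed immersion is an isomorphism on modules over the quotient structure sheaf. Both are sound.
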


\begin{proof}
Since $F$ commutes with direct limits and $X$ is noetherian, it
suffices to prove that $F\M = 0$ for $\M$ coherent.  Let
$i:\operatorname{Supp }\M \rightarrow X$ and
$i':\operatorname{Supp}\M \rightarrow U$ denote inclusions, so
that $i=ui'$.  Since $i$ is a closed immersion, the unit map $\M
\rightarrow i_{*}i^{*}\M$ is an isomorphism. Thus,
\begin{eqnarray*}
F\M & \cong & Fi_{*}i^{*}\M \\
& = & F(ui')_{*}i^{*}\M \\
& = & Fu_{*}{i'}_{*}i^{*}\M \\
& = & 0.
\end{eqnarray*}

\end{proof}

\begin{example}
Let $W$ be a noetherian scheme.  Then, for $i>0$ the functor $H^{i}(W,-)$ is
totally global by \cite[III, ex. 8.2]{hart}.
\end{example}

\begin{prop} \label{prop.vanish}
If $\F$ is an object of ${\sf Qcoh }X \times Y$ and
$F=-\otimes_{\mathcal{O}_{X}}\F$ is totally global then $\mathcal{F}=0$.
\end{prop}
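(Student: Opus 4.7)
The plan is to exploit the hypothesis on each affine open $U \subset X$, via the isomorphism (\ref{eqn.pullback}), to force $\mathcal{F}$ to vanish on an affine open cover of $X \times Y$.

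First, I would fix an open immersion $u:U \rightarrow X$ with $U$ affine and set $v = u \times \operatorname{id}_{Y}$. The totally global hypothesis says $Fu_{*} = 0$, while (\ref{eqn.pullback}) provides a natural isomorphism $u_{*}(-)\otimes_{\mathcal{O}_{X}}\mathcal{F} \cong -\otimes_{\mathcal{O}_{U}}v^{*}\mathcal{F}$, so $-\otimes_{\mathcal{O}_{U}}v^{*}\mathcal{F}$ is the zero functor on ${\sf Qcoh }U$. Evaluating at $\mathcal{O}_{U}$ and unwinding the definition of the tensor product of sheaves yields $q_{*}v^{*}\mathcal{F} = 0$, where $q:U \times Y \rightarrow Y$ is the second projection.

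Second, I would take sections over an affine open $W \subset Y$. Writing out the pushforward and using that $v$ is itself an open immersion (so $v^{*}\mathcal{F}$ is just the restriction of $\mathcal{F}$ to $U \times Y$), one obtains
$$
0 = (q_{*}v^{*}\mathcal{F})(W) = v^{*}\mathcal{F}(U \times W) = \mathcal{F}(U \times W).
$$

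Finally, I would cover $X$ by finitely many affine opens $\{U_{i}\}$ and $Y$ by affine opens $\{W_{j}\}$; since all products are taken over $Z = \operatorname{Spec} k$, each $U_{i} \times W_{j}$ is affine, and together they form an affine open cover of $X \times Y$. The preceding step forces $\mathcal{F}(U_{i} \times W_{j}) = 0$, and quasi-coherence of $\mathcal{F}$ then gives $\mathcal{F}|_{U_{i} \times W_{j}} = 0$ for every pair $(i,j)$, so $\mathcal{F} = 0$. The only real input is the isomorphism (\ref{eqn.pullback}); once that is applied, the remaining steps are direct evaluations, and I anticipate no substantive obstacle.
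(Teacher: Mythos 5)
Your proposal is correct and takes essentially the same route as the paper: apply the isomorphism (\ref{eqn.pullback}) to convert $Fu_{*}=0$ into $q_{*}v^{*}\F=0$, evaluate on affine opens $W\subset Y$ to get $\F(U\times W)=0$, and let $U$ and $W$ range over affine covers. The paper's only cosmetic difference is that it phrases the final step in terms of stalks rather than the affine cover $\{U_i\times W_j\}$ of $X\times Y$, but the content is the same.
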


\begin{proof}
Suppose $U$ is an affine scheme, $u:U \rightarrow X$ is an
open immersion, $v=u \times \operatorname{id }_{Y}$ and $p,q:U \times Y
\rightarrow U, Y$ are projections.  The map (\ref{eqn.pullback}) induces an
isomorphism,
\begin{eqnarray*}
u_{*}(-) \otimes_{\mathcal{O}_{X}} \F &
\overset{\cong}{\longrightarrow} &
-\otimes_{\mathcal{O}_{U}} v^{*} \F \\
& \overset{=}{\longrightarrow} & q_{*}(p^{*}(-)
\otimes_{\mathcal{O}_{U \times Y}} v^{*}\F)
\end{eqnarray*}
Since $F$ is totally global,
\begin{eqnarray*}
0 & = & Fu_{*}\mathcal{O}_{U} \\
& \cong & q_{*}v^{*}\F \\
& = & \operatorname{pr}_{2*}(v_{*}v^{*}\F).
\end{eqnarray*}
Thus, if
$W$ is an affine open subset of $Y$, $v^{*}\F(U \times W)=0$.
Therefore, $v^{*}\F=0$ since its sections on an affine open cover
are $0$.  We conclude that if $p \in U \times Y$, then $\F_{p}=0$.
Since $U$ is arbitrary, $\F=0$ as desired.
\end{proof}

For the remainder of this section, we take {\it affine open cover of
$X$} to mean a set of pairs $\{(U_{i},u_{i})\}$ where $u_{i}:U_{i}
\rightarrow X$ is inclusion of an affine open subset $U_{i}$ of
$X$ such that every point of $X$ is contained in some $U_{i}$.

\begin{prop} \label{prop.single}
If $F \in {\sf Bimod}_{k}(X-Y)$ and $\{(U_{i},u_{i})\}$ is an
affine open cover of $X$ such that $Fu_{i*}=0$ for all $i$, then
$F$ is totally global.
\end{prop}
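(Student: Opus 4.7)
Fix an arbitrary open immersion $u:U\to X$ with $U$ affine; I must show $Fu_*=0$. Because $X$ is separated, $u$ is an affine morphism, so $u_*$ is exact and preserves direct limits, whence $Fu_*$ lies in ${\sf Bimod}_{k}(U-Y)$. Since $U$ is affine, Proposition \ref{prop.wattone} produces $\G\in{\sf Qcoh }U\times Y$ with $Fu_*\cong -\otimes_{\mathcal{O}_U}\G$. It therefore suffices to prove $\G=0$.

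For each $i$, set $V_i:=U\cap U_i$, which is affine since $X$ is separated, and let $w_i:V_i\to U$ and $\tilde w_i:V_i\to U_i$ be the inclusions, so that $uw_i=u_i\tilde w_i$. The hypothesis then yields $Fu_*w_{i*}=Fu_{i*}\tilde w_{i*}=0$. On the other hand, putting $\tilde v_i:=w_i\times\operatorname{id}_Y$, the canonical isomorphism (\ref{eqn.pullback}) applied to $w_i$ and $\G$ supplies a natural isomorphism
$$Fu_*w_{i*}\;\cong\;w_{i*}(-)\otimes_{\mathcal{O}_U}\G\;\cong\;-\otimes_{\mathcal{O}_{V_i}}\tilde v_i^*\G$$
of functors ${\sf Qcoh }V_i\to{\sf Qcoh }Y$. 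Combining the two, the functor $-\otimes_{\mathcal{O}_{V_i}}\tilde v_i^*\G$ is zero.

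Since $V_i$ is affine, Proposition \ref{prop.wattone} tells us that $\mathcal{H}\mapsto -\otimes_{\mathcal{O}_{V_i}}\mathcal{H}$ is an equivalence of categories, and hence conservative, so $\tilde v_i^*\G\cong 0$. The open immersions $\tilde v_i:V_i\times Y\to U\times Y$ cover $U\times Y$ (because the $V_i$ cover $U$), and $\tilde v_i^*\G$ is simply the restriction of $\G$; we conclude $\G=0$. Hence $Fu_*=0$, and since $u$ was arbitrary, $F$ is totally global.

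The one real subtlety I anticipate is that right-exactness of $F$ is too weak to run a naive sheafified \v{C}ech argument directly on $u_*\M$: the augmented \v{C}ech complex for the cover $\{V_i\}$ of $U$ presents $u_*\M$ as the \emph{kernel} of a map between sheaves on which $F$ vanishes, but $F$ need not be left exact. The approach above sidesteps this difficulty by first invoking the affine Eilenberg--Watts theorem (Proposition \ref{prop.wattone}) to replace $Fu_*$ by a tensor functor, and only then transporting the vanishing hypothesis down to the cover $\{V_i\}$ via the basechange/projection isomorphism (\ref{eqn.pullback}).
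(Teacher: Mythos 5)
Your proof is correct and follows essentially the same line as the paper's: both reduce to the affine case via the intersections $U \cap U_i$, invoke Proposition \ref{prop.wattone} to realize the restricted functor as tensoring with a sheaf, and then use (\ref{eqn.pullback}) to transport the vanishing hypothesis to the restrictions of that sheaf. The only difference is organizational — the paper first isolates the affine statement as a standalone step and then reduces, while you argue directly with an arbitrary affine open $U$ from the start — but the key moves and the order in which they appear are the same.
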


\begin{proof}
We first prove that if $X$ is affine, $\{(W_{i},w_{i})\}$ is an
affine open cover of $X$, and $E \in {\sf Funct}_{k}(X-Y)$ is such
that $Ew_{i*}=0$ for all $i$, then $E=0$. Since $X$ is affine, $E
\cong -\otimes_{\mathcal{O}_{X}} \E$ for some object $\E$ of ${\sf
Qcoh} X \times Y$ by Proposition \ref{prop.wattone}. Thus, if
$p,q:W_{i} \times Y \rightarrow W_{i},Y$ are projections and
$v_{i} =w_{i} \times \operatorname{id}_{Y}$, then by
(\ref{eqn.pullback}),
$$
Ew_{i*} \cong q_{*}(p^{*}-\otimes_{\mathcal{O}_{W_{i} \times Y}}
{v_{i}}^{*}\E).
$$
The vanishing of $Ew_{i*}$ for all $i$ implies that
$q_{*}{v_{i}}^{*}\E = 0$ for all $i$. Therefore, for all $i$ and
all $W \subset Y$ open affine,
$$
{v_{i}}^{*}\E(W_{i} \times W)=0.
$$
This implies that ${v_{i}}^{*}\E=0$ for all $i$ which implies that
$\E$, and hence $E$, is $0$.

Now we prove that for any $X$ and any affine open cover $\{(V_j,v_j)\}$ of $X$, $Fv_{j*}=0$ for all $j$.  The proposition will follow.  Let $F$ and $\{(U_{i},u_{i})\}$ be as in
the statement of the Proposition.  Let $w_{ij}:U_{i} \cap V_{j}
\rightarrow V_{j}$ and $w_{ij}':U_{i} \cap V_{j} \rightarrow
U_{i}$ denote inclusions. Then $Fv_{j*}w_{ij*}=Fu_{i*}w_{ij*}'=0$
for all $i$ by hypothesis. But $V_{j}$ is affine, $Fv_{j*} \in
{\sf Bimod}_{k}(V_{j} \times Y)$ since $v_{j*}$ is right exact by
the affineness of $v_{j}$ \cite[III, Prop. 8.1 and
Remark 3.5.1]{hart}, and $\mathfrak{W} := \{(U_{i} \cap V_{j},
w_{ij})\}_{i}$ is an affine open cover of $V_{j}$.  Hence the
argument of the first paragraph applies to the functor $E=Fv_{j*}$
and the open cover $\mathfrak{W}$ of $V_{j}$, so that $Fv_{j*}=0$.
\end{proof}



\section{The Eilenberg-Watts Functor} \label{section.wattsfunctor}
In this section we review the construction of an assignment
$$
W: {\sf Bimod}_{k}(X - Y) \rightarrow {\sf Qcoh }X \times Y
$$
sketched in \cite[Lemma 3.1.1]{ncruled}, and prove it is
functorial (Subsection \ref{subsection.defn}), left-exact
(Proposition \ref{prop.lex}), and compatible with affine
localization (Proposition \ref{prop.loc}).
 We will show in Corollary \ref{cor.approx} that if $F \in {\sf Bimod}_{k}(X-Y)$ then $-\otimes_{\mathcal{O}_{X}}W(F)$ serves as a
``best" approximation to $F$ by tensoring with a bimodule.  In
order to prove this, we will need the fact, proven in Proposition
\ref{prop.sheafid}, that if $F \cong -\otimes_{\mathcal{O}_{X}}\F$ then
$W(F) \cong \F$.  We end the section by showing that if $F$ is exact, then $\operatorname{pr}_{1}^{*}(-)\otimes_{\mathcal{O}_{X \times Y}}W(F)$ is exact (Corollary \ref{cor.fexact}).  This
result is used in Section \ref{section.wattstheory} to prove that if $F$ is exact then $F \cong -\otimes_{\mathcal{O}_{X}}W(F)$ (Corollary \ref{thm.cokvanish}).

\subsection{Preliminaries} \label{subsection.prelims}
Before defining the functor $W$, we describe conventions we will
employ throughout the rest of this paper.

Let $\{U_{i}\}_{i \in I}$ be a collection of open subschemes of
$X$ (we identify the underlying set of $U_{i}$ with a subset of
the underlying set of $X$).  For any finite subset $\{i_{1},
\ldots, i_{n}\}$ of $I$, we let
$$
U_{i_{1} \cdots i_{n}} = U_{i_{1}} \cap \cdots \cap U_{i_{n}}
$$
and we let
$$
u_{i_{1}\cdots i_{n}}: U_{i_{1} \cdots i_{n}} \rightarrow X
$$
denote the inclusion morphism.  For any inclusion $\{j_{1},\ldots,
j_{m}\} \subset \{i_{1},\ldots,i_{n}\}$ of finite subsets of $I$,
we let
$$
u_{i_{1} \cdots i_{n}}^{j_{1} \cdots j_{m}}:U_{i_{1} \cdots i_{n}}
\rightarrow U_{j_{1} \cdots j_{m}}
$$
denote the inclusion morphism.  Similar conventions apply when the
open subschemes are labelled $\{V_{j}\}$ or $\{W_{k}\}$, etc.  We
denote the open cover $\{U_{i}\}_{i \in I}$ by $\mathfrak{U}$.

For $i,j$ with $j \neq i$, we let
$$
\eta_{ij}^{i}:\operatorname{id}_{{\sf Qcoh }U_{i}} \rightarrow
u_{ij*}^{i}u_{ij}^{i*}
$$
denote the canonical unit of the adjoint pair
$(u_{ij}^{i*},u_{ij*}^{i})$.

\subsection{Definition of the Eilenberg-Watts Functor}
\label{subsection.defn}

Let $F$ be an object in the category ${\sf Bimod}_{k}(X-Y)$.  Our goal in this
subsection is to associate to $F$ an object $W(F) \in {\sf Qcoh }X
\times Y$, and show the assignment $F \mapsto W(F)$ is functorial.
To this end, we first choose a finite affine open cover of $X$,
$\mathfrak{U} = \{U_{i}\}_{i \in I}$ with $I=\{1, \ldots, n\}$.
Recall that $X$ is quasi-compact, so such an open cover exists.

For each $i \in I$, the proof of Proposition \ref{prop.wattone}
gives us an $\F_{i} \in {\sf Qcoh }U_{i} \times Y$ and a canonical
isomorphism $Fu_{i*} \overset{\cong}{\longrightarrow} -
\otimes_{\mathcal{O}_{U_{i}}} \F_{i}$.

Now let $V_{i} = U_{i} \times Y$.  Recalling our notational conventions about
open covers of $X \times Y$ in Section \ref{subsection.prelims}, we claim that there exists a
canonical isomorphism
\begin{equation} \label{eqn.psidef}
\psi_{ij}:v_{ij}^{i*}\F_{i} \overset{\cong}{\longrightarrow}
v_{ij}^{j*}\F_{j}.
\end{equation}
To prove the claim, we note that there are isomorphisms
\begin{eqnarray*}
-\otimes_{\mathcal{O}_{U_{ij}}}v_{ij}^{i*}\F_{i} &
\overset{\cong}{\longrightarrow} & u_{ij*}^{i}(-)
\otimes_{\mathcal{O}_{U_{i}}} \F_{i} \\
& \overset{\cong}{\longrightarrow} & Fu_{i*}u_{ij*}^{i} \\
& \overset{=}{\longrightarrow} & Fu_{j*}u_{ij*}^{j} \\
& \overset{\cong}{\longrightarrow} & u_{ij*}^{j}(-)
\otimes_{\mathcal{O}_{U_{j}}} \F_{j} \\
& \overset{\cong}{\longrightarrow} &
-\otimes_{\mathcal{O}_{U_{ij}}}v_{ij}^{j*}\F_{j},
\end{eqnarray*}
the first is the inverse of (\ref{eqn.pullback}), the second is
from the definition of $\F_{i}$ and the fourth and fifth are
defined similarly.  The map $\psi_{ij}$ corresponds to the
composition above under the equivalence from Proposition
\ref{prop.wattone}.

Next, for each pair $i,j \in I$ with $j > i$, we let
$$
\phi_{i}^{ij}:v_{i*}\F_{i} \longrightarrow
v_{i*}v_{ij*}^{i}v_{ij}^{i*}\F_{i}
$$
denote the morphism induced by $\eta_{ij}^{i}$ and we define
$$
\phi_{j}^{ij}:v_{j*}\F_{j} \longrightarrow
v_{i*}v_{ij*}^{i}v_{ij}^{i*}\F_{i}
$$
as the composition of the morphism $v_{j*}\F_{j} \longrightarrow
v_{j*}v_{ij*}^{j}v_{ij}^{j*}\F_{j} =
v_{i*}v_{ij*}^{i}v_{ij}^{j*}\F_{j}$ induced by $\eta_{ij}^{j}$ and
the morphism $v_{i*}v_{ij*}^{i}v_{ij}^{j*}\F_{j} \longrightarrow
v_{i*}v_{ij*}^{i}v_{ij}^{i*}\F_{i}$ induced by $\psi_{ij}^{-1}$.

Finally, since $I$ is finite, in order to specify a morphism
$$
\oplus_{i}v_{i*}\F_{i} \longrightarrow
\oplus_{i<j}v_{i*}v_{ij*}^{i}v_{ij}^{i*}\F_{i},
$$
it suffices to define a morphism
$$
\theta_{i}^{jk}:v_{i*}\F_{i}
\rightarrow v_{j*}v_{jk*}^{j}v_{jk}^{j*}\F_{j}
$$
for all $i,j,k \in I$ with $j<k$.  We define such a morphism as

\begin{equation} \label{eqn.babytheta}
\theta_{i}^{jk} =
\begin{cases}
\phi_{i}^{ik} & \text{if $i=j$},\\
-\phi_{i}^{ji} & \text{if $i=k$, and} \\
0 & \text{otherwise}.
\end{cases}
\end{equation}
The morphisms $\{\theta_{i}^{jk}\}$ induce a morphism
\begin{equation} \label{eqn.theta}
\theta_{F}:\oplus_{i}v_{i*}\F_{i} \longrightarrow
\oplus_{i<j}v_{i*}v_{ij*}^{i}v_{ij}^{i*}\F_{i}.
\end{equation}
We define
$$
W_{\mathfrak{U}}(F) := \operatorname{ker }\theta_{F}.
$$
We next note that $W_{\mathfrak{U}}(F)$ is an object of ${\sf Qcoh
}X \times Y$.  For, since $v_{i*}v_{ij*}^{i}=v_{ij*}$ is an affine
morphism, it is quasi-compact and separated by \cite[II, ex.
5.17b]{hart}.  Hence if $\mathcal{M}$ is an object of ${\sf Qcoh
}U_{ij} \times Y$ then $v_{ij*}\M$ is an object of ${\sf Qcoh }X
\times Y$ by \cite[II, Prop. 5.8c]{hart}.

We define $W_{\mathfrak{U}}$ on morphisms as follows.  Let
$\Delta:E \longrightarrow F$ be a morphism in ${\sf Bimod }_{k}(X
- Y)$ and let $\Delta * u_{i*}:E u_{i*}
\longrightarrow F u_{i*}$ denote the horizontal
composition of the natural transformations $\Delta$ and
$\operatorname{id}_{u_{i*}}$. By the proof of Proposition
\ref{prop.wattone}, there are canonical isomorphisms $Eu_{i*}
\longrightarrow -\otimes_{\mathcal{O}_{U_{i}}}\E_{i}$ and $Fu_{i*}
\longrightarrow -\otimes_{\mathcal{O}_{U_{i}}}\F_{i}$. Hence,
$\Delta * u_{i*}$ induces, via these isomorphisms, a morphism
$$
-\otimes_{\mathcal{O}_{U_{i}}}\E_{i} \longrightarrow
-\otimes_{\mathcal{O}_{U_{i}}}\F_{i}.
$$
Therefore, by Proposition \ref{prop.wattone}, there is an induced
morphism
$$
\delta_{i}:\E_{i} \longrightarrow \F_{i}.
$$
The fact that the maps $\{\delta_{i}\}_{i \in I}$ induce a
morphism $\delta:W_{\mathfrak{U}}(E) \longrightarrow W_{\mathfrak{U}}(F)$ now follows from the
naturality of $\eta_{ij}^{i}$ and of $\psi_{ij}$. We leave it as a
straightforward exercise for the reader to check that the
naturality of $\psi_{ij}$ follows from the naturality of
(\ref{eqn.pullback2}) and (\ref{eqn.pullback}).

We define
$$
W_{\mathfrak{U}}(\Delta) := \delta.
$$
It is straightforward to complete the verification that
$W_{\mathfrak{U}}$ is a functor and we omit it.

\subsection{Properties of the Eilenberg-Watts Functor}
The following result will not be used in the sequel.
\begin{prop} \label{prop.lex}
The functor $W_{\mathfrak{U}}:{\sf Bimod}_{k}(X-Y) \rightarrow
{\sf Qcoh }X \times Y$ is left-exact in the sense that if $F', F,
F'' \in {\sf Bimod}_{k}(X-Y)$ are such that
\begin{equation} \label{eqn.exactif}
0 \rightarrow F' \overset{\Lambda}{\rightarrow} F
\overset{\Xi}{\rightarrow} F'' \rightarrow 0
\end{equation}
is exact in ${\sf Funct}_{k}({\sf Qcoh }X,{\sf Qcoh }Y)$, then
$$
0 \rightarrow W_{\mathfrak{U}}(F')
\overset{W_{\mathfrak{U}}(\Lambda)}{\rightarrow}
W_{\mathfrak{U}}(F) \overset{W_{\mathfrak{U}}(\Xi)}{\rightarrow}
W_{\mathfrak{U}}(F'')
$$
is exact in ${\sf Qcoh }X \times Y$.
\end{prop}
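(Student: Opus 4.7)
The plan is to use Proposition \ref{prop.wattone} to translate the exactness of (\ref{eqn.exactif}) into a short exact sequence of bimodules over each affine open $U_i$, and then to realize $W_\mathfrak{U}$ applied to (\ref{eqn.exactif}) as the induced morphism on kernels of a commutative ladder.

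First I would observe that, since $X$ is separated and each $U_i$ is affine, each $u_i$ is affine, so $u_{i*}$ is exact. Evaluating (\ref{eqn.exactif}) at objects of the form $u_{i*}\M$ with $\M \in {\sf Qcoh }U_i$, and using that exactness in ${\sf Funct}_k({\sf Qcoh }X,{\sf Qcoh }Y)$ is pointwise, shows that
\begin{equation*}
0 \longrightarrow F'u_{i*} \longrightarrow Fu_{i*} \longrightarrow F''u_{i*} \longrightarrow 0
\end{equation*}
is a short exact sequence in ${\sf Bimod}_k(U_i - Y)$. Since $U_i$ is affine, applying Proposition \ref{prop.wattone} with $X$ replaced by $U_i$ transports this into a short exact sequence
\begin{equation*}
0 \longrightarrow \E'_i \longrightarrow \F_i \longrightarrow \F''_i \longrightarrow 0
\end{equation*}
in ${\sf Qcoh }U_i \times Y$, where $\F_i, \E'_i, \F''_i$ are precisely the bimodules used in the construction of $W_\mathfrak{U}(F), W_\mathfrak{U}(F'), W_\mathfrak{U}(F'')$.

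Next, the functors $v_{i*}$ and $v^i_{ij*}$ are exact because $v_i$ and $v^i_{ij}$ are affine (using that $U_{ij}$ is affine, $X$ being separated), while $v^{i*}_{ij}$ is exact because pullback along an open immersion is restriction. Applying these functors and taking the finite direct sums, I obtain, for the pair $(F',F)$, the commutative diagram
\begin{equation*}
\begin{CD}
\oplus_i v_{i*}\E'_i @>>> \oplus_i v_{i*}\F_i \\
@V{\theta_{F'}}VV @V{\theta_F}VV \\
\oplus_{i<j} v_{i*}v^i_{ij*}v^{i*}_{ij}\E'_i @>>> \oplus_{i<j} v_{i*}v^i_{ij*}v^{i*}_{ij}\F_i
\end{CD}
\end{equation*}
together with the analogous commutative square for the pair $(F,F'')$. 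Commutativity reduces to the naturality of the unit morphisms $\eta^i_{ij}, \eta^j_{ij}$ and of the isomorphisms $\psi_{ij}$ defined in (\ref{eqn.psidef}); the latter naturality follows, as noted in Subsection \ref{subsection.defn}, from the naturality of (\ref{eqn.pullback2}) and (\ref{eqn.pullback}).

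Finally, by the first step the top horizontals, extended across all three functors $F', F, F''$, assemble into a short exact sequence in ${\sf Qcoh }X \times Y$. Applying the left-exact kernel functor vertically then produces
\begin{equation*}
0 \longrightarrow W_\mathfrak{U}(F') \longrightarrow W_\mathfrak{U}(F) \longrightarrow W_\mathfrak{U}(F''),
\end{equation*}
and the two induced morphisms coincide with $W_\mathfrak{U}(\Lambda)$ and $W_\mathfrak{U}(\Xi)$ by the very definition of $W_\mathfrak{U}$ on morphisms in Subsection \ref{subsection.defn}. The main technical obstacle is verifying the naturality of $\psi_{ij}$ with respect to morphisms in ${\sf Bimod}_k(X-Y)$, i.e.\ the commutativity of the second square above, but this is the point already flagged in Subsection \ref{subsection.defn} and ultimately rests on the compatibilities of Section \ref{section.basechange}.
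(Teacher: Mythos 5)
Your proposal is correct and follows the same path as the paper's own proof: restrict along the affine opens $u_i$ to get, via Proposition \ref{prop.wattone}, a short exact sequence of sheaves $0 \to \F'_i \to \F_i \to \F''_i \to 0$ on each $U_i \times Y$, push forward by the exact functors $v_{i*}$, $v^i_{ij*}$, $v^{i*}_{ij}$, and then recover $W_{\mathfrak U}$ as a kernel of a commutative ladder of short exact sequences, to which the Snake Lemma applies. The only cosmetic difference is that the paper explicitly names the Snake Lemma where you say ``apply the left-exact kernel functor vertically,'' and the paper records the (needed) exactness of both rows of the ladder, which you state only for the top row though it follows from the exactness of the same functors.
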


\begin{proof}
Exactness of (\ref{eqn.exactif}) implies that, for all $u_{i}$,
$$
0 \rightarrow F'u_{i*} \overset{\Lambda}{\rightarrow} Fu_{i*}
\overset{\Xi}{\rightarrow} F''u_{i*} \rightarrow 0
$$
is exact in ${\sf Funct}_{k}(U_{i}-Y)$.  Thus, this sequence is
exact in ${\sf Bimod}_{k}(U_{i}-Y)$.  By Proposition
\ref{prop.wattone}, the induced sequence
$$
0 \rightarrow \mathcal{F}_{i}' \rightarrow \mathcal{F}_{i}
\rightarrow \mathcal{F}_{i}'' \rightarrow 0
$$
is exact in ${\sf Qcoh }U_{i}  \times  Y$.  Therefore the induced
sequences
$$
0 \rightarrow \oplus_{i} v_{i*}\mathcal{F}_{i}' \rightarrow
\oplus_{i} v_{i*}\mathcal{F}_{i} \rightarrow \oplus_{i}
v_{i*}\mathcal{F}_{i}'' \rightarrow 0
$$
and
$$
0 \rightarrow \oplus_{i<j}
v_{i*}v_{ij*}^{i}v_{ij}^{i*}\mathcal{F}_{i}' \rightarrow \oplus_{i
< j} v_{i*}v^{i}_{ij*}v^{i*}_{ij}\mathcal{F}_{i} \rightarrow
\oplus_{i < j} v_{i*}v^{i}_{ij*}v_{ij}^{i*}\mathcal{F}_{i}''
\rightarrow 0
$$
are exact since $v_{i}$ and $v_{ij}^{i}$ are affine and
$v^{i}_{ij}$ is an open immersion.  There is thus a commutative
diagram with exact rows
$$
\begin{CD}
0 & \rightarrow & \oplus_{i} v_{i*}\mathcal{F}_{i}' & \rightarrow & \oplus_{i} v_{i*}\mathcal{F}_{i} &
\rightarrow & \oplus_{i} v_{i*}\mathcal{F}_{i}'' & \rightarrow & 0 \\
& & @V{\theta_{F'}}VV @V{\theta_{F}}VV @VV{\theta_{F''}}V & & \\
0 & \rightarrow & \oplus_{i<j}
v_{i*}v_{ij*}^{i}v_{ij}^{i*}\mathcal{F}_{i}' & \rightarrow &
\oplus_{i < j} v_{i*}v^{i}_{ij*}v^{i*}_{ij}\mathcal{F}_{i} &
\rightarrow & \oplus_{i < j}
v_{i*}v^{i}_{ij*}v_{ij}^{i*}\mathcal{F}_{i}'' & \rightarrow & 0
\end{CD}
$$
Left-exactness of $W_{\mathfrak{U}}$ follows from the Snake Lemma.
\end{proof}

\begin{prop} \label{prop.loc}
The functor $W_{\mathfrak{U}}$ is compatible with affine
localization in the sense that if $\mathfrak{U} \cap U_{k}$
denotes the affine open cover $\{U_{ik}\}_{i \in I}$ of $U_{k}$,
then
$$
W_{\mathfrak{U} \cap U_{k}}(Fu_{k*}) \cong
v_{k}^{*}W_{\mathfrak{U}}(F)
$$
naturally in $F$.
\end{prop}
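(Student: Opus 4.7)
The plan is to identify both sides as kernels of canonically isomorphic morphisms in ${\sf Qcoh}\, U_k \times Y$. Since $v_k$ is an open immersion, hence flat, the functor $v_k^*$ is exact and therefore commutes with the kernel defining $W_{\mathfrak{U}}(F)$; thus $v_k^*W_{\mathfrak{U}}(F) \cong \operatorname{ker}(v_k^*\theta_F)$. I would next identify the sheaves $\G_i$ on $U_{ik}\times Y$ produced by the construction of Subsection \ref{subsection.defn} applied to $Fu_{k*}$ with the cover $\{U_{ik}\}_{i\in I}$ of $U_k$. Since $(Fu_{k*})u_{ik*}^k = Fu_{i*}u_{ik*}^i \cong u_{ik*}^i(-)\otimes_{\mathcal{O}_{U_i}}\F_i$, an application of (\ref{eqn.pullback}) identifies this canonically with $-\otimes_{\mathcal{O}_{U_{ik}}}v_{ik}^{i*}\F_i$, so the associated sheaf is $\G_i := v_{ik}^{i*}\F_i$.

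The basechange isomorphism along the fiber square with corners $X\times Y$, $U_i\times Y$, $U_k\times Y$, $U_{ik}\times Y$ yields $v_k^* v_{i*}\F_i \cong v_{ik*}^k v_{ik}^{i*}\F_i = v_{ik*}^k\G_i$, which matches the $i$-th source summand of $\theta_{Fu_{k*}}$. An iterated basechange (using that $v_{ij}$ is affine, and that $U_{ijk}$ is a double fiber product with either of two factorizations $U_{ijk}\to U_{ik}\to U_i$ or $U_{ijk}\to U_{ij}\to U_i$) similarly gives $v_k^*(v_{i*}v_{ij*}^i v_{ij}^{i*}\F_i) \cong v_{ik*}^k v_{ijk*}^{ik}v_{ijk}^{ik*}\G_i$, matching the $(i,j)$-th target summand of $\theta_{Fu_{k*}}$. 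Under these canonical identifications, the maps $v_k^*\theta_F$ and $\theta_{Fu_{k*}}$ have identical source and target.

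The hard part will be showing that these two morphisms actually coincide. Since $\theta_F$ is assembled from the summands $\theta_i^{jk}$ of (\ref{eqn.babytheta}), which are built from the unit morphisms $\eta_{ij}^i$ and the gluing isomorphisms $\psi_{ij}$ from (\ref{eqn.psidef}), the comparison can be carried out summand-by-summand. Compatibility of basechange with the unit morphisms is essentially the content of Lemma \ref{lemma.canonicalcompatwithbasechange}. For the $\psi_{ij}$, I would unwind the defining composition in (\ref{eqn.psidef}) and verify, by a diagram chase combining Lemmas \ref{lemma.tensorcomp} and \ref{lemma.peel} (to handle the (\ref{eqn.pullback}) and (\ref{eqn.pullback2}) factors) with Lemma \ref{lemma.canonicalcompatwithbasechange} (to handle the basechange factors), that $v_k^*\psi_{ij}$ agrees with the gluing isomorphism attached to $Fu_{k*}$ for the cover $\mathfrak{U}\cap U_k$. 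Naturality in $F$ then follows from the naturality of each canonical isomorphism used throughout.
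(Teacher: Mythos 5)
Your plan follows the same route as the paper's proof: show that $v_k^*\theta_F$ and $\theta_{Fu_{k*}}$ become canonically isomorphic morphisms after conjugating by basechange isomorphisms, and then compare summand-by-summand using the compatibility results of Section~3. One small attribution slip: the compatibility of basechange with the unit morphisms needed for the $\delta_i^{ij}$ component is \emph{not} Lemma~\ref{lemma.canonicalcompatwithbasechange} (which is really a hexagonal compatibility between the maps~(\ref{eqn.pullback2}), (\ref{eqn.pullback}) and basechange, invoked later in the proof of Proposition~\ref{prop.compatwithaffine}); in the paper's proof it is handled by a direct affine computation, and it is Lemma~\ref{lemma.peel} that does the work of untangling the gluing isomorphisms $\psi_{ij}$. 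The substance and structure of your argument are nonetheless correct.
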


\begin{proof}
We prove the result in several steps.

{\it Step 1:  We note that the canonical basechange morphisms
$$
v_{k}^{*}v_{i*} \longrightarrow v_{ik*}^{k}v_{ik}^{i*}
$$
and
$$
v_{ik}^{i*}v_{ij*}^{i} \longrightarrow v_{ijk*}^{ik}v_{ijk}^{ij*}
$$
associated to the diagrams
$$
\begin{CD}
U_{ik} \times Y & \overset{v_{ik}^{i}}{\longrightarrow} & U_{i} \times Y \\
@V{v_{ik}^{k}}VV @VV{v_{i}}V \\
U_{k} \times Y & \underset{v_{k}}{\longrightarrow} & X \times Y
\end{CD}
$$
and
$$
\begin{CD}
U_{ijk} \times Y & \overset{v_{ijk}^{ij}}{\longrightarrow} & U_{ij} \times Y \\
@V{v_{ijk}^{ik}}VV @VV{v_{ij}^{i}}V \\
U_{ik} \times Y & \underset{v_{ik}^{i}}{\longrightarrow} & U_{i} \times Y
\end{CD}
$$
are isomorphisms.}  This follows from a routine affine
computation, which we omit.
\newline
{\it Step 2:  Consider the composition
$$
v_{ik*}^{k}v_{ik}^{i*} \rightarrow
v_{ik*}^{k}v_{ik}^{i*}v_{ij*}^{i}v_{ij}^{i*} \rightarrow
v_{ik*}^{k}v_{ijk*}^{ik}v_{ijk}^{ij*}v_{ij}^{i*} \rightarrow
v_{ik*}^{k}v_{ijk*}^{ik}v_{ijk}^{ik*}v_{ik}^{i*}
$$
whose left arrow is induced by the unit of the adjoint pair
$(v_{ij}^{i*},v_{ij*}^{i})$, whose middle arrow is induced by the second
basechange isomorphism from Step 1, and whose right arrow is
induced from the canonical isomorphism
\begin{equation} \label{eqn.anothercanonical}
v_{ijk}^{ij*}v_{ij}^{i*} \overset{\cong}{\rightarrow}
(v_{ij}^{i}v_{ijk}^{ij})^{*}=(v_{ik}^{i}v_{ijk}^{ik})^{*}
\overset{\cong}{\rightarrow} v_{ijk}^{ik*}v_{ik}^{i*}.
\end{equation}
We note that this composition is equal to the morphism induced by the unit of the adjoint pair
$(v_{ijk}^{ik*},v_{ijk*}^{ik})$.}  In order to prove this fact, consider the following diagram
$$
\begin{CD}
v_{ik}^{i*} & \rightarrow & v_{ik}^{i*}v_{ij*}^{i}v_{ij}^{i*} \\
@VVV @VVV \\
v_{ijk*}^{ik}v_{ijk}^{ik*}v_{ik}^{i*} & \rightarrow &
v_{ijk*}^{ik}v_{ijk}^{ij*}v_{ij}^{i*}
\end{CD}
$$
whose top horizontal and left vertical are induced by canonical
units, whose right vertical is induced by basechange
isomorphisms from Step 1, and whose bottom horizontal is induced by the
inverse of (\ref{eqn.anothercanonical}).  It suffices to prove that this diagram commutes.  The
verification of this fact follows from a routine affine
computation, which we omit.
\newline
{\it Step 3:  Let $F$ be an object of ${\sf Bimod}_{k}(X-Y)$ and
consider the morphism $\delta$
$$
\delta:\oplus_{i}v_{ik*}^{k}v_{ik}^{i*}\F_{i} \longrightarrow \oplus_{i<j}v_{ik*}^{k}v_{ijk*}^{ik}v_{ijk}^{ik*}v_{ik}^{i*}\F_{i}
$$
defined by the composition
\begin{eqnarray*}
\oplus_{i}v_{ik*}^{k}v_{ik}^{i*}\F_{i} &
\overset{\cong}{\longrightarrow} & \oplus_{i}v_{k}^{*}v_{i*}\F_{i}
\\
& \overset{v_{k}^{*}\theta_{F}}{\longrightarrow} & \oplus_{i
<j}v_{k}^{*}v_{i*}v_{ij*}^{i}v_{ij}^{i*}\F_{i} \\
& \overset{\cong}{\longrightarrow} &
\oplus_{i<j}v_{ik*}^{k}v_{ik}^{i*}v_{ij*}^{i}v_{ij}^{i*}\F_{i} \\
& \overset{\cong}{\longrightarrow} &
\oplus_{i<j}v_{ik*}^{k}v_{ijk*}^{ik}v_{ijk}^{ij*}v_{ij}^{i*}\F_{i}
\\
& \overset{\cong}{\rightarrow} &
\oplus_{i<j}v_{ik*}^{k}v_{ijk*}^{ik}v_{ijk}^{ik*}v_{ik}^{i*}\F_{i}
\end{eqnarray*}
whose first and third and fourth arrows are basechange
isomorphisms from Step 1, and whose fifth arrow is induced by the
canonical isomorphism (\ref{eqn.anothercanonical}).  Let
$\delta_{i}^{jl}$ denote the component of $\delta$ from the $i$th
summand to the $jl$th summand, i.e.
$$
\delta_{i}^{jl}: v_{ik*}^{k}v_{ik}^{i*}\F_{i} \longrightarrow v_{jk*}^{k}v_{jlk*}^{jk}v_{jlk}^{jk*}v_{jk}^{j*}\F_{j}.
$$
We show that
\begin{itemize}
\item{} $\delta_{i}^{jl}=0$ if $i$ is not equal to $j$ or $l$,

\item{} $\delta_{i}^{ij}$ is induced by the unit of the adjoint pair
$(v_{ijk}^{ik*},v_{ijk*}^{ik})$, and

\item{} $\delta_{j}^{ij}$ is equal to
$-1$ times the composition
$$
v_{jk*}^{k}v_{jk}^{j*}\F_{j} \longrightarrow
v_{jk*}^{k}v_{ijk*}^{jk}v_{ijk}^{jk*}v_{jk}^{j*}\F_{j}
\longrightarrow
v_{ik*}^{k}v_{ijk*}^{ik}v_{ijk}^{ik*}v_{ik}^{i*}\F_{i}
$$
whose left arrow is induced by the unit of the adjoint pair
$(v_{ijk}^{jk*},v_{ijk*}^{jk})$ and whose right arrow corresponds,
under the equivalence of Proposition \ref{prop.wattone}, to the
composition of functors
\begin{equation} \label{eqn.wattcanonical}
\begin{split}
-\otimes_{\mathcal{O}_{U_{ijk}}}v_{ijk}^{jk*}v_{jk}^{j*}\F_{j}
&\overset{\cong}{\longrightarrow} u_{ijk*}^{jk}(-)\otimes_{\mathcal{O}_{U_{jk}}}v_{jk}^{j*}\F_{j} \\
&\overset{\cong}{\longrightarrow}  u_{jk*}^{j}u_{ijk*}^{jk}(-) \otimes_{\mathcal{O}_{U_{j}}}\F_{j} \\
&\overset{\cong}{\longrightarrow} Fu_{j*}u_{jk*}^{j}u_{ijk*}^{jk} \\
&\overset{=}{\longrightarrow} Fu_{i*}u_{ik*}^{i}u_{ijk*}^{ik} \\
&\overset{\cong}{\longrightarrow} -\otimes_{\mathcal{O}_{U_{ijk}}}v_{ijk}^{ik*}v_{ik}^{i*}\F_{i} \\
\end{split}
\end{equation}
whose first two arrows are induced by the inverse of (\ref{eqn.pullback}), whose third arrow
is the canonical isomorphism from the proof of Proposition
\ref{prop.wattone}, and whose last arrow is defined analogously to the composition of the first three arrows.
\end{itemize}
}
The fact that $\delta_{i}^{jl}=0$ if $i$ is not equal to $j$ or
$l$ follows from the definition of $\theta_{F}$.  The assertion
regarding $\delta_{i}^{ij}$ follows from Step 2.

It remains to verify the description of $\delta_{j}^{ij}$.  Consider the following diagram
\begin{equation} \label{eqn.bigdig000}
\begin{CD}
v_{k}^{*}v_{j*}v_{ij*}^{j}v_{ij}^{j*}\F_{j} & \longrightarrow &
v_{k}^{*}v_{i*}v_{ij*}^{i}v_{ij}^{i*}\F_{i} \\
@VVV @VVV \\
v_{jk*}^{k}v_{jk}^{j*}v_{ij*}^{j}v_{ij}^{j*}\F_{j} & &
v_{ik*}^{k}v_{ik}^{i*}v_{ij*}^{i}v_{ij}^{i*}\F_{i} \\
@VVV @VVV \\
v_{jk*}^{k}v_{ijk*}^{jk}v_{ijk}^{ij*}v_{ij}^{j*}\F_{j} & &
v_{ik*}^{k}v_{ijk*}^{ik}v_{ijk}^{ij*}v_{ij}^{i*}\F_{i} \\
@VVV @VVV \\
v_{jk*}^{k}v_{ijk*}^{jk}v_{ijk}^{jk*}v_{jk}^{j*}\F_{j} &
\longrightarrow &
v_{ik*}^{k}v_{ijk*}^{ik}v_{ijk}^{ik*}v_{ik}^{i*}\F_{i}
\end{CD}
\end{equation}
whose top horizontal is induced by the map $\psi_{ji}$ defined in
(\ref{eqn.psidef}), whose bottom horizontal is induced by the
morphism corresponding to (\ref{eqn.wattcanonical}) and whose
verticals are induced by basechange isomorphisms from Step 1 and
by canonical morphisms of the form (\ref{eqn.anothercanonical}).   By
Step 2, it suffices to prove that this diagram commutes.  To this end, consider the diagrams
\begin{equation} \label{eqn.bigdig1}
\begin{CD}
v_{k}^{*}v_{j*}v_{ij*}^{j} & \overset{=}{\longrightarrow} &
v_{k}^{*}v_{i*}v_{ij*}^{i} \\
@VVV @VVV \\
v_{jk*}^{k}v_{jk}^{j*}v_{ij*}^{j} & &
v_{ik*}^{k}v_{ik}^{i*}v_{ij*}^{i} \\
@VVV @VVV \\
v_{jk*}^{k}v_{ijk*}^{jk}v_{ijk}^{ij*} &
\underset{=}{\longrightarrow} &
v_{ik*}^{k}v_{ijk*}^{ik}v_{ijk}^{ij*}
\end{CD}
\end{equation}
whose verticals are induced by basechange morphisms, and the diagram
\begin{equation} \label{eqn.bigdig2}
\begin{CD}
v_{ijk}^{ij*}v_{ij}^{j*}\F_{j} &
\overset{v_{ijk}^{ij*}\psi_{ji}}{\longrightarrow} &
v_{ijk}^{ij*}v_{ij}^{i*}\F_{i} \\
@VVV @VVV \\
v_{ijk}^{jk*}v_{jk}^{j*}\F_{j} & \longrightarrow &
v_{ijk}^{ik*}v_{ik}^{i*}\F_{i}
\end{CD}
\end{equation}
whose verticals are induced by canonical morphisms of the form
(\ref{eqn.anothercanonical}) and whose bottom horizontal is the
morphism corresponding to (\ref{eqn.wattcanonical}).  In order to prove that (\ref{eqn.bigdig000})
commutes, it suffices to prove that (\ref{eqn.bigdig1}) and (\ref{eqn.bigdig2}) commute.

The commutativity of (\ref{eqn.bigdig1}) follows from a
straightforward affine computation, which we omit.  To prove that
(\ref{eqn.bigdig2}) commutes, we first note that
$v_{ijk}^{ij*}\psi_{ji}$ corresponds to the composition
(\ref{eqn.wattcanonical}) by the naturality of
(\ref{eqn.pullback}).  Hence, a straightforward computation shows
that the commutativity of (\ref{eqn.bigdig2}) follows from the
commutativity of four "corner" subdiagrams.  The upper-left such
diagram, for example, is the diagram
$$
\begin{CD}
-\otimes_{\mathcal{O}_{U_{ijk}}}v_{ijk}^{ij*}v_{ij}^{j*}\F_{j} &
\longrightarrow &
u_{ijk*}^{ij}(-)\otimes_{\mathcal{O}_{U_{ij}}}v_{ij}^{j*}\F_{j} &
\longrightarrow &
u_{ij*}^{j}u_{ijk*}^{ij}(-)\otimes_{\mathcal{O}_{U_{j}}}\F_{j} \\
@VVV & & @VV{=}V \\
-\otimes_{\mathcal{O}_{U_{ijk}}}(v_{ij}^{j}v_{ijk}^{ij})^{*}\F_{j}
& @>>> &
(u_{ij}^{j}u_{ijk}^{ij})_{*}(-)\otimes_{\mathcal{O}_{U_{j}}}\F_{j}
\end{CD}
$$
whose horizontals are induced by the isomorphisms (\ref{eqn.pullback}) and
whose left vertical is induced by the canonical isomorphism
$$
v_{ijk}^{ij*}v_{ij}^{j*} \overset{\cong}{\longrightarrow}
(v_{ij}^{j}v_{ijk}^{ij})^{*}.
$$
These corner subdiagrams commute by Lemma \ref{lemma.peel}.
\newline
{\it Step 4:  Let $\E_{i} \in {\sf Qcoh }U_{ik} \times Y$ denote
the object corresponding to the functor $Fu_{ik*} \in {\sf
Bimod}_{k}(U_{ik}-Y)$ in the proof of Proposition
\ref{prop.wattone}.  Consider the following diagram
\begin{equation} \label{eqn.focus0}
\begin{CD}
v_{ijk}^{jk*}v_{jk}^{j*}\F_{j} & \longrightarrow &
v_{ijk}^{ik*}v_{ik}^{i*}\F_{i} \\
@VVV @VVV \\
v_{ijk}^{jk*}\E_{j} & \longrightarrow & v_{ijk}^{ik*}\E_{i}
\end{CD}
\end{equation}
whose top horizontal is the map (\ref{eqn.wattcanonical}), whose
bottom horizontal is the map $\psi_{ji}$ defined by
(\ref{eqn.psidef}) but corresponding to the functor $Fu_{k*}$,
whose left vertical is induced by the composition
\begin{eqnarray*}
-\otimes_{\mathcal{O}_{U_{ijk}}}v_{ijk}^{jk*}v_{jk}^{j*}\F_{j} &
\overset{\cong}{\longrightarrow} & u_{ijk*}^{jk}(-)
\otimes_{\mathcal{O}_{U_{jk}}}v_{jk}^{j*}\F_{j}
\\
& \overset{\cong}{\longrightarrow} &
u_{jk*}^{j}u_{ijk*}^{jk}(-)\otimes_{\mathcal{O}_{U_{j}}}\F_{j} \\
& \overset{\cong}{\longrightarrow} &
Fu_{j*}u_{jk*}^{j}u_{ijk*}^{jk} \\
& \overset{=}{\longrightarrow} & Fu_{jk*}u_{ijk*}^{jk} \\
& \overset{\cong}{\longrightarrow} & u_{ijk*}^{jk}(-)
\otimes_{\mathcal{O}_{U_{jk}}}\E_{j} \\
& \overset{\cong}{\longrightarrow} &
-\otimes_{\mathcal{O}_{U_{ijk}}}v_{ijk}^{jk*}\E_{j}
\end{eqnarray*}
whose first, second, and sixth morphisms are induced by (\ref{eqn.pullback}),
and whose third and fifth morphisms are the canonical ones
constructed in Proposition \ref{prop.wattone}, and whose right
vertical is defined similarly.  Then this diagram commutes.}  Upon expanding the rows
and columns of the diagram, the proof is seen to follow from the
trivial commutativity of the diagram
$$
\begin{CD}
Fu_{j*}u_{jk*}^{j}u_{ijk*}^{jk} & \overset{=}{\longrightarrow} &
Fu_{i*}u_{ik*}^{i}u_{ijk*}^{ik} \\
@V{=}VV @VV{=}V \\
Fu_{jk*}u_{ijk*}^{jk} & \underset{=}{\longrightarrow} &
Fu_{ik*}u_{ijk*}^{ik}.
\end{CD}
$$

{\it Step 5:  We show that $\operatorname{ker }\delta \cong \operatorname{ker }\theta_{Fu_{k*}}$.}  We retain the notation from Step 4.  It suffices to show that, for all $i,j,l$, the diagram
\begin{equation} \label{eqn.focus}
\begin{CD}
v_{ik*}^{k}v_{ik}^{i*}\F_{i} & \overset{\delta_{i}^{jl}}{\longrightarrow} & v_{jk*}^{k}v_{jkl*}^{jk}v_{jkl}^{jk*}v_{jk}^{j*}\F_{j} \\
@VVV @VVV \\
v_{ik*}^{k}\E_{i} & \underset{(\theta_{Fu_{k*}})_{i}^{jl}}{\longrightarrow} & v_{jk*}^{k}v_{jkl*}^{jk}v_{jkl}^{jk*}\E_{j}
\end{CD}
\end{equation}
whose verticals correspond, under the equivalence of Proposition \ref{prop.wattone}, to the composition of functors
\begin{eqnarray*}
-\otimes_{\mathcal{O}_{U_{ik}}}v_{ik}^{i*}\F_{i} & \overset{\cong}{\longrightarrow} & u_{ik*}^{i}(-) \otimes_{\mathcal{O}_{U_{i}}} \F_{i} \\
& \overset{\cong}{\longrightarrow} & Fu_{i*}u_{ik*}^{i} \\
& \overset{=}{\longrightarrow} & Fu_{ik*} \\
& \overset{\cong}{\longrightarrow} & -\otimes_{\mathcal{O}_{U_{ik}}} \E_{i},
\end{eqnarray*}
commutes.

If $i \neq j$ and $i \neq l$, both routes of (\ref{eqn.focus}) are $0$ by definition of $\delta$ and $\theta$.  If $i=j$, both the top and bottom of (\ref{eqn.focus}) are induced by the unit of $(v_{jkl}^{jk*},v_{jkl*}^{jk})$ so that (\ref{eqn.focus}) commutes in this case as well.  It remains to prove that the diagram
\begin{equation} \label{eqn.focus1}
\begin{CD}
v_{jk*}^{k}v_{jk}^{j*}\F_{j} & \overset{\delta_{j}^{ij}}{\longrightarrow} & v_{ik*}^{k}v_{ijk*}^{ik}v_{ijk}^{ik*}v_{ik}^{i*}\F_{i} \\
@VVV @VVV \\
v_{jk*}^{k}\E_{j} & \underset{(\theta_{Fu_{k*}})_{j}^{ij}}{\longrightarrow} & v_{ik*}^{k}v_{ijk*}^{ik}v_{ijk}^{ik*}\E_{i}
\end{CD}
\end{equation}
whose verticals equal those of (\ref{eqn.focus}), commutes.

By Step 3, (\ref{eqn.focus1}) may be broken up into the diagram
\begin{equation} \label{eqn.fo1}
\begin{CD}
v_{jk*}^{k}v_{jk}^{j*}\F_{j} & \longrightarrow & v_{jk*}^{k}v_{ijk*}^{jk}v_{ijk}^{jk*}v_{jk}^{j*}\F_{j} \\
@VVV @VVV \\
v_{jk*}^{k}\E_{j} & \longrightarrow & v_{jk*}^{k}v_{ijk*}^{jk}v_{ijk}^{jk*}\E_{j}
\end{CD}
\end{equation}
whose horizontals are induced by units, to the left of the diagram
\begin{equation} \label{eqn.fo2}
\begin{CD}
v_{jk*}^{k}v_{ijk*}^{jk}v_{ijk}^{jk*}v_{jk}^{j*}\F_{j} & \longrightarrow & v_{ik*}^{k}v_{ijk*}^{ik}v_{ijk}^{ik*}v_{ik}^{i*}\F_{i} \\
@VVV @VVV \\
v_{jk*}^{k}v_{ijk*}^{jk}v_{ijk}^{jk*}\E_{j} & \longrightarrow & v_{ik*}^{k}v_{ijk*}^{ik}v_{ijk}^{ik*}\E_{i}
\end{CD}
\end{equation}
which is $v_{ijk*}^{k}$ applied to (\ref{eqn.focus0}).  The commutativity of (\ref{eqn.fo1}) is elementary, while the commutativity of (\ref{eqn.fo2}) follows from Step 4.

{\it Step 6:  Retain the notation from Step 5.  We prove that there is an isomorphism
$\rho:W_{\mathfrak{U} \cap U_{k}}(Fu_{k*}) \longrightarrow v_{k}^{*}W_{\mathfrak{U}}(F)$ making the diagram
\begin{equation} \label{eqn.rhocommute}
\begin{CD}
W_{\mathfrak{U} \cap U_{k}}(Fu_{k*}) & \longrightarrow &
\oplus_{i}v_{ik*}^{k}\E_{i} \\
& & @VVV  \\
@V{\rho}VV \oplus_{i}v_{ik*}^{k}v_{ik}^{i*}\F_{i} \\
& & @VVV \\
v_{k}^{*}W_{\mathfrak{U}}(F) & \longrightarrow &
\oplus_{i}v_{k}^{*}v_{i*}\F_{i}
\end{CD}
\end{equation}
whose top vertical is induced by the inverse of the left vertical in (\ref{eqn.focus}) and whose bottom vertical is induced by basechange, commute.  The proof will follow}.

By Step 5 there is an isomorphism
$\rho_1:W_{\mathfrak{U} \cap U_{k}}(Fu_{k*}) \rightarrow \operatorname{ker }\delta$ making the diagram
$$
\begin{CD}
W_{\mathfrak{U} \cap U_{k}}(Fu_{k*}) & \longrightarrow & \oplus_{i}v_{ik*}^{k}\E_{i} \\
@V{\rho_1}VV @VVV \\
\operatorname{ker }\delta & \longrightarrow & \oplus_{i}v_{ik*}^{k}v_{ik}^{i*}\F_{i} \\
\end{CD}
$$
whose right vertical is the upper right vertical in (\ref{eqn.rhocommute}), commute.  By
Step 3, there is an isomorphism $\rho_2:\operatorname{ker }\delta \longrightarrow v_{k}^{*}W_{\mathfrak{U}}(F)$
making the diagram
$$
\begin{CD}
\operatorname{ker }\delta & \longrightarrow & \oplus_{i}v_{ik*}^{k}v_{ik}^{i*}\F_{i} \\
@V{\rho_2}VV @VVV \\
v_{k}^{*}W_{\mathfrak{U}}(F) & \longrightarrow & \oplus_{i}v_{k}^{*}v_{i*}\F_{i}
\end{CD}
$$
whose right vertical is induced by basechange, commute.  We let $\rho=\rho_2 \rho_1$.  Naturality of $\rho$ in $F$
is a straightforward but tedious exercise, which we omit.
\end{proof}
We now work towards a proof of Proposition \ref{prop.sheafid}.  We
begin by introducing some notation and proving a preliminary lemma.

Let $S$ be a scheme with finite open cover $\{W_{i}\}_{i \in I}$
where $I=\{1,\ldots,n\}$ and let $\F$ be an object of ${\sf Qcoh
}S$.  Let
$$
\psi_{ij}:w_{ij}^{i*}w_{i}^{*}\F \overset{\cong}{\longrightarrow}
(w_{i}w_{ij}^{i})^{*}\F \overset{=}{\longrightarrow}
(w_{j}w_{ij}^{j})^{*} \overset{\cong}{\longrightarrow}
w_{ij}^{j*}w_{j}^{*}\F
$$
denote the canonical isomorphism, let
$$
\phi_{i}^{ij}:w_{i*}w_{i}^{*}\F \longrightarrow
w_{i*}w_{ij*}^{i}w_{ij}^{i*}w_{i}^{*}\F
$$
be induced by the unit of $(w_{ij}^{i*},w_{ij*}^{i})$, and let
$\phi_{j}^{ij}=w_{ij*}\psi_{ji} \circ \phi_{j}^{ji}$.  We define
\begin{equation} \label{eqn.delta1}
\delta_{\F}:\oplus_{i}w_{i*}w_{i}^{*}\F \longrightarrow
\oplus_{i<j}w_{i*}w_{ij*}^{i}w_{ij}^{i*}w_{i}^{*}\F
\end{equation}
via its components
$$
\delta_{i}^{jk}:w_{i*}w_{i}^{*}\F \longrightarrow
w_{j*}w_{jk*}^{j}w_{jk}^{j*}w_{j}^{*}\F
$$
as follows:
\begin{equation}
\delta_{i}^{jk} =
\begin{cases}
\phi_{i}^{ik} & \text{if $i=j$},\\
-\phi_{i}^{ji} & \text{if $i=k$, and} \\
0 & \text{otherwise}.
\end{cases}
\end{equation}

\begin{lemma} \label{lemma.descent}
The map $\F \longrightarrow
\oplus_{i} w_{i*}w_{i}^{*}\F$ induced by the units of
$\{(w_{i}^{*},w_{i*})\}_{i}$ is a kernel of $\delta_{\F}$.
\end{lemma}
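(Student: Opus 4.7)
The plan is to check the claim on sections over open subsets of $S$, where it reduces to the sheaf axiom for $\F$ relative to the cover $\{W_{i}\}$. Because kernels of morphisms of sheaves are computed sectionwise, it suffices to show that for each open $U \subset S$ the sequence
$$
0 \longrightarrow \F(U) \longrightarrow \bigoplus_{i} w_{i*}w_{i}^{*}\F(U) \overset{\delta_{\F}(U)}{\longrightarrow} \bigoplus_{i<j} w_{i*}w_{ij*}^{i}w_{ij}^{i*}w_{i}^{*}\F(U)
$$
is exact, where the first map is induced by the units $\F \rightarrow w_{i*}w_{i}^{*}\F$.

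First I would set up the identifications. Since $w_{i}:W_{i} \rightarrow S$ is an open immersion, $w_{i}^{*}\F$ is literally the restriction $\F|_{W_{i}}$, and since $w_{ij} = w_{i} w_{ij}^{i} = w_{j}w_{ij}^{j}$ is the open immersion $W_{ij} \hookrightarrow S$, there are canonical identifications $w_{i*}w_{i}^{*}\F(U) = \F(U \cap W_{i})$ and $w_{i*}w_{ij*}^{i}w_{ij}^{i*}w_{i}^{*}\F(U) = \F(U \cap W_{ij})$. Under these identifications the unit $\F \rightarrow w_{i*}w_{i}^{*}\F$ evaluates to restriction $s \mapsto s|_{U \cap W_{i}}$, and the morphism $\phi_{i}^{ij}$ evaluates to restriction from $U \cap W_{i}$ to $U \cap W_{ij}$. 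The isomorphism $\psi_{ij}$ is by construction the composition of canonical pullback-composition isomorphisms identifying both $w_{ij}^{i*}w_{i}^{*}\F$ and $w_{ij}^{j*}w_{j}^{*}\F$ with $\F|_{W_{ij}}$, so on sections it is the identity on $\F(U \cap W_{ij})$; consequently $\phi_{j}^{ij}$ also evaluates to restriction from $U \cap W_{j}$ to $U \cap W_{ij}$.

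With these identifications and the sign conventions in the definition of $\delta_{\F}$, the section sequence above becomes
$$
0 \longrightarrow \F(U) \longrightarrow \bigoplus_{i} \F(U \cap W_{i}) \longrightarrow \bigoplus_{i<j}\F(U \cap W_{ij}),
$$
with the second arrow sending $(s_{i})$ to $(s_{i}|_{U \cap W_{ij}} - s_{j}|_{U \cap W_{ij}})_{i<j}$. This is precisely the standard sheaf exact sequence for $\F$ with respect to the open cover $\{U \cap W_{i}\}$ of $U$: injectivity of the first map is the separation axiom, and exactness at the middle is the gluing axiom. The identifications are manifestly natural in $U$, so the original sequence of sheaves is exact, proving the lemma.

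The only real work in this argument is the bookkeeping needed to check that $\psi_{ij}$ reduces to the identity on $\F(U \cap W_{ij})$ once both sides are identified with $\F|_{W_{ij}}$; this is formal because $\psi_{ij}$ was defined as a composition of the canonical isomorphisms $(w_{ij}^{i})^{*}w_{i}^{*} \cong w_{ij}^{*} \cong (w_{ij}^{j})^{*}w_{j}^{*}$, each of which becomes the identity on restrictions. After that step the lemma is simply a restatement of the sheaf axiom.
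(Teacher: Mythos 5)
Your proof is correct, and it takes a genuinely different route from the paper's. You identify each pushforward--pullback term with sections over the appropriate intersection (using that the $w$'s are open immersions) and then observe that on every open $U$ the sequence becomes the standard separation--gluing sequence for $\F$ with respect to the cover $\{U\cap W_i\}$ of $U$; since kernels of sheaf morphisms are computed sectionwise, this gives the result directly. The paper instead argues by verifying the universal property of the kernel: given a test morphism $\phi:\N\to\oplus_i w_{i*}w_i^*\F$ killed by $\delta_\F$, it extracts local morphisms $\psi_i:w_i^*\N\to w_i^*\F$ by adjunction, checks the compatibility condition on overlaps, and invokes the gluing lemma for morphisms of sheaves (cited from Bosch--L\"utkebohmert--Raynaud) to produce a unique $\psi:\N\to\F$. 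The two arguments encode the same underlying content --- both are the sheaf axiom in different guises --- but your version is more elementary and transparent, while the paper's formulation is notably independent of pointwise section computations, which matters for the author's stated goal of eventually transporting these arguments to noncommutative target spaces where sections are not available. One small point worth making explicit in your write-up: when you invoke the ``sheaf axiom,'' exactness at the middle term requires the gluing axiom together with separation (to ensure the glued section is unique), not just gluing; your phrasing ``exactness at the middle is the gluing axiom'' slightly understates this, though the conclusion is unaffected.
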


\begin{proof}
Let $\eta_{\F}:\F \rightarrow \oplus_{i}w_{i*}w_{i}^{*}\F$ be
induced by the units of $\{(w_{i}^{*},w_{i*})\}_{i}$ and let
$$
\phi:\N \rightarrow \oplus_{i}w_{i*}w_{i}^{*}\F
$$
be a morphism such that $\delta_{\F} \phi=0$.  We must show that
there exists a unique $\psi:\N \rightarrow \F$ such that
$\eta_{\F} \psi = \phi$.

For each $i$, $\phi$ has a component
$$
\phi_{i}:\N \rightarrow w_{i*}w_{i}^{*}\F.
$$
By adjointness of $(w_{i}^{*},w_{i*})$, there exists a morphism
$$
\psi_{i}:w_{i}^{*}\N \rightarrow w_{i}^{*}\F
$$
such that $\phi_{i}$ is the composition
$$
\N \longrightarrow w_{i*}w_{i}^{*}\N
\overset{w_{i*}\psi_{i}}{\longrightarrow} w_{i*}w_{i}^{*}\F.
$$
whose left map is the unit.

{\it Step 1:  We show that there exists a unique morphism $\psi:\mathcal{N} \rightarrow \F$ such that $w_{i}^{*}\psi=\psi_{i}$ for all $i$.}  It suffices, by \cite[Section 6.1]{descent}, to show that, for
all pairs $i,j$, the diagram
\begin{equation} \label{eqn.descent0}
\begin{CD}
(w_{i}w_{ij}^{i})^{*}\N & \overset{\cong}{\longrightarrow} &
w_{ij}^{i*}w_{i}^{*}\N &
\overset{w_{ij}^{i*}\psi_{i}}{\longrightarrow} &
w_{ij}^{i*}w_{i}^{*}\F & \overset{\cong}{\longrightarrow} &  (w_{i}w_{ij}^{i})^{*}\F \\
@V{=}VV & & & & @VV{=}V \\
(w_{j}w_{ij}^{j})^{*}\N & \underset{\cong}{\longrightarrow} &
w_{ij}^{j*}w_{j}^{*}\N &
\underset{w_{ij}^{j*}\psi_{j}}{\longrightarrow} &
w_{ij}^{j*}w_{j}^{*}\F & \underset{\cong}{\longrightarrow} &
(w_{j}w_{ij}^{j})^{*}\F
\end{CD}
\end{equation}
whose unlabelled arrows are canonical, commutes.  To this end, we
note that since $\delta_{F} \phi=0$, the diagram
\begin{equation} \label{eqn.descent1}
\begin{CD}
\N & \longrightarrow & w_{i*}w_{i}^{*}\N &
\overset{w_{i*}\psi_{i}}{\longrightarrow} &
w_{i*}w_{i}^{*}\F & \longrightarrow & w_{i*}w_{ij*}^{i}w_{ij}^{i*}w_{i*}\F \\
@V{=}VV & & & & @VV{\cong}V \\
\N & \longrightarrow & w_{j*}w_{j}^{*}\N &
\underset{w_{j*}\psi_{j}}{\longrightarrow} & w_{j*}w_{j}^{*}\F &
\longrightarrow & w_{j*}w_{ij*}^{j}w_{ij}^{j*}w_{j}^{*}\F
\end{CD}
\end{equation}
whose right vertical is canonical and whose other unlabelled
morphisms are units, commutes for all pairs $i,j$.

Applying $w_{ij}^{*}$ to (\ref{eqn.descent1}) yields the
commutative diagram
\begin{equation} \label{eqn.descent2}
\begin{CD}
w_{ij}^{*}\N & \longrightarrow & w_{ij}^{*}w_{i*}w_{i}^{*}\N &
\overset{w_{ij}^{*}w_{i*}\psi_{i}}{\longrightarrow} &
w_{ij}^{*}w_{i*}w_{i}^{*}\F & \longrightarrow & w_{ij}^{*}w_{i*}w_{ij*}^{i}w_{ij}^{i*}w_{i*}\F \\
@V{=}VV & & & & @VV{\cong}V \\
w_{ij}^{*}\N & \longrightarrow & w_{ij}^{*}w_{j*}w_{j}^{*}\N &
\underset{w_{ij}^{*}w_{j*}\psi_{j}}{\longrightarrow} &
w_{ij}^{*}w_{j*}w_{j}^{*}\F & \longrightarrow &
w_{ij}^{*}w_{j*}w_{ij*}^{j}w_{ij}^{j*}w_{j}^{*}\F.
\end{CD}
\end{equation}
Consider the following diagram
\begin{equation} \label{eqn.descent3}
\begin{CD}
w_{ij}^{i*}w_{i}^{*}w_{i*}w_{i}^{*}\N
@>{w_{ij}^{i*}w_{i}^{*}w_{i*}\psi_{i}}>>
w_{ij}^{i*}w_{i}^{*}w_{i*}w_{i}^{*}\F & \longrightarrow &
w_{ij}^{i*}w_{i}^{*}\F \\
@AAA @VVV @VV{=}V \\
w_{ij}^{i*}w_{i}^{*}\N & &
w_{ij}^{i*}w_{i}^{*}w_{i*}w_{ij*}^{i}w_{ij}^{i*}w_{i}^{*}\F &
\longrightarrow & w_{ij}^{i*}w_{i}^{*}\F \\
@V{\cong}VV @V{\cong}VV @VV{\cong}V \\
w_{ij}^{j*}w_{j}^{*}\N & &
w_{ij}^{j*}w_{j}^{*}w_{j*}w_{ij*}^{j}w_{ij}^{j*}w_{j}^{*}\F &
\longrightarrow & w_{ij}^{j*}w_{j}^{*}\F \\
@VVV @AAA @VV{=}V \\
w_{ij}^{j*}w_{j}^{*}w_{j*}w_{j}^{*}\N @>>{w_{ij}^{j*}w_{j}^{*}w_{j*}\psi_{j}}>
w_{ij}^{j*}w_{j}^{*}w_{j*}w_{j}^{*}\F & \longrightarrow &
w_{ij}^{j*}w_{j}^{*}\F
\end{CD}
\end{equation}
whose unadorned arrows are induced by units and counits, and whose
unlabelled isomorphisms are canonical.  It follows from a straightforward computation that the commutativity
of (\ref{eqn.descent2}) implies the commutativity of (\ref{eqn.descent3}).  As one can check, the
outside circuit of this diagram starting at $w_{ij}^{i*}w_{i}^{*}\mathcal{N}$ equals (\ref{eqn.descent0}).

{\it Step 2:  We show that the map $\psi:\mathcal{N} \rightarrow \F$ from Step 1 is unique such that the diagram
\begin{equation} \label{eqn.maindescent}
\begin{CD}
\N & \longrightarrow &
w_{i*}w_{i}^{*}\N \\
@V{\psi}VV @VV{w_{i*}\psi_{i}}V \\
\F & \longrightarrow &
w_{i*}w_{i}^{*}\F
\end{CD}
\end{equation}
whose horizontals are units, commutes for all $i$.}  We first note that $\psi$ makes (\ref{eqn.maindescent}) commute by naturality of the unit of $(w_{i}^{*},w_{i*})$, since $\psi=w_{i}^{*}\psi_{i}$.

We next note that if $\gamma:\mathcal{N}
\rightarrow \F$ replacing $\psi$ in (\ref{eqn.maindescent}) makes (\ref{eqn.maindescent}) commute for all $i$, the
commutativity of the diagram constructed by applying
$w_{i}^{*}$ to (\ref{eqn.maindescent}) and composing on the right
with the counit $w_{i}^{*}w_{i*} \rightarrow
\operatorname{id}_{{\sf Qcoh}S}$ implies that $w_{i}^{*}\gamma =
\psi_{i}$.  Step 1 tells us that $\psi$ is unique with this
property.  Therefore $\gamma=\psi$.

{\it Step 3:  We complete the proof.}  By Step 2, $\psi:\mathcal{N} \rightarrow \F$ is unique making the diagram
$$
\begin{CD}
\N & \longrightarrow &
\oplus_{i}w_{i*}w_{i}^{*}\N \\
@V{\psi}VV @VV{w_{i*}\psi_{i}}V \\
\F & \underset{\eta_{\F}}{\longrightarrow} &
\oplus_{i}w_{i*}w_{i}^{*}\F.
\end{CD}
$$
whose top horizontal is induced by units, commute.  By the construction of $\psi_{i}$, the top route of this diagram is $\phi$.  The result follows.
\end{proof}

\begin{prop} \label{prop.sheafid}
If $\F$ is an object of the category ${\sf Qcoh }X \times Y$ and $F$ is an
object of the category ${\sf Bimod}_{k}(X-Y)$ such that
$F \cong -\otimes_{\mathcal{O}_{X}}\F$, then $W_{\mathfrak{U}}(F) \cong
\F$.
\end{prop}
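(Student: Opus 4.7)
The plan is to exhibit $W_{\mathfrak{U}}(F)$ as the kernel of the map $\delta_{\F}$ from (\ref{eqn.delta1}), where we regard $\{V_i = U_i \times Y\}_{i\in I}$ as a finite open cover of $X\times Y$, and then quote Lemma \ref{lemma.descent}.

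First I would identify the sheaves $\F_i$ appearing in the definition of $W_{\mathfrak{U}}(F)$. Since $F \cong -\otimes_{\mathcal{O}_X}\F$, the isomorphism (\ref{eqn.pullback}) applied to $u=u_i$ gives
$$
Fu_{i*} \;=\; u_{i*}(-)\otimes_{\mathcal{O}_X}\F \;\overset{\cong}{\longrightarrow}\; -\otimes_{\mathcal{O}_{U_i}} v_i^*\F.
$$
Because the functor $\F_i \mapsto -\otimes_{\mathcal{O}_{U_i}}\F_i$ is fully faithful by Proposition \ref{prop.wattone}, this forces $\F_i \cong v_i^*\F$, and we may arrange that the canonical isomorphism $Fu_{i*}\cong -\otimes_{\mathcal{O}_{U_i}}\F_i$ from the proof of Proposition \ref{prop.wattone} is identified with (\ref{eqn.pullback}) itself.

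Second, and this is the crux, I would show that under these identifications the comparison isomorphism $\psi_{ij}:v_{ij}^{i*}\F_i \to v_{ij}^{j*}\F_j$ defined by (\ref{eqn.psidef}) coincides with the canonical pullback isomorphism
$$
v_{ij}^{i*}v_i^*\F \;\overset{\cong}{\longrightarrow}\; (v_i v_{ij}^i)^*\F \;\overset{=}{\longrightarrow}\; (v_j v_{ij}^j)^*\F \;\overset{\cong}{\longrightarrow}\; v_{ij}^{j*}v_j^*\F
$$
used in the construction of $\delta_{\F}$. Unwinding (\ref{eqn.psidef}), $\psi_{ij}$ is built out of the five canonical morphisms, four of which are instances of (\ref{eqn.pullback}) at the nested immersions $u_i$ and $u_{ij}^i$ (resp. $u_j$ and $u_{ij}^j$). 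Lemma \ref{lemma.peel} is precisely the statement that (\ref{eqn.pullback}) is compatible with composition along such nested immersions, so applying it on both sides identifies the composite in (\ref{eqn.psidef}) with the canonical map.

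Third, the morphisms $\phi_i^{ij}$ and $\phi_j^{ij}$ used to build $\theta_F$ are assembled from $\psi_{ij}$ and the units of $(v_{ij}^{i*},v_{ij*}^i)$, $(v_{ij}^{j*},v_{ij*}^j)$; by the previous step this is exactly the recipe producing $\delta_{\F}$ applied to the cover $\{V_i\}$ of $X\times Y$. Hence $W_{\mathfrak{U}}(F) = \ker\theta_F \cong \ker\delta_{\F}$, and Lemma \ref{lemma.descent} gives $\ker\delta_{\F}\cong \F$. The main obstacle is the second step, namely the diagram chase verifying that $\psi_{ij}$ is the canonical pullback isomorphism; all the real content is carried by Lemma \ref{lemma.peel}, and the remaining verifications (identifying $\F_i$ with $v_i^*\F$ and matching $\theta_F$ with $\delta_{\F}$) are essentially formal once that identification of $\psi_{ij}$ is in hand.
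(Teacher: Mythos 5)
Your proposal is correct and follows essentially the same route as the paper. Both identify $\F_i$ with $v_i^*\F$ via (\ref{eqn.pullback}), use Lemma \ref{lemma.peel} to show the glue maps $\psi_{ij}$ coincide with the canonical pullback isomorphisms (the paper does this through diagrams (\ref{eqn.gluecanonical}) and (\ref{eqn.diagramlastlast})), so that $\theta_F$ transports to $\delta_\F$ under the isomorphisms $\psi_i:\F_i\to v_i^*\F$, and then invoke Lemma \ref{lemma.descent}.
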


\begin{proof}
Since $W_{\mathfrak{U}}$ is a functor, we may assume without loss of generality that $F=-\otimes_{\mathcal{O}_{X}}\F$.  Let $\psi_{i}:\F_{i} \longrightarrow
v_{i}^{*}\F$ correspond, via Proposition \ref{prop.wattone},
to the composition
\begin{eqnarray*}
-\otimes_{\mathcal{O}_{U_{i}}}\F_{i} &
\overset{\cong}{\longrightarrow} & Fu_{i*} \\
& \overset{=}{\longrightarrow} & u_{i*}(-)
\otimes_{\mathcal{O}_{X}}\F \\
& \overset{\cong}{\longrightarrow} &
-\otimes_{\mathcal{O}_{U_{i}}}v_{i}^{*}\F
\end{eqnarray*}
whose first arrow is the canonical isomorphism from the proof of
Proposition \ref{prop.wattone}, and whose third arrow is
(\ref{eqn.pullback}).

By Lemma \ref{lemma.descent}, it suffices to prove that the
diagram
$$
\begin{CD}
\oplus_{i}v_{i*}v_{i}^{*}\F &
\overset{\delta_{F}}{\longrightarrow} & \oplus_{i
<j}v_{i*}v_{ij*}^{i}v_{ij}^{i*}v_{i}^{*}\F \\
@V{\oplus_{i} v_{i*}\psi_{i}^{-1}}VV @VV{\oplus_{i
<j}v_{i*}v_{ij*}^{i}v_{ij}^{i*}\psi_{i}^{-1}}V \\
\oplus_{i}v_{i*}\F_{i} & \underset{\theta_{F}}{\longrightarrow} &
\oplus_{i<j}v_{i*}v_{ij*}^{i}v_{ij}^{i*}\F_{i}
\end{CD}
$$
commutes, where we specialize the notation for the definition of
$\delta_{F}$ preceding Lemma \ref{lemma.descent} to our situation by setting $S=X \times Y$,
$W_{i}=U_{i} \times Y$, and $w_{i}=v_{i}$.

We recall that $\delta_{i}^{ij}$ denotes the component of
$\delta_{\mathcal{F}}$ from the $i$th summand to the $i,j$th
summand, and $\theta_{i}^{ij}$ is defined similarly.  The
verification that
$$
v_{i*}v_{ij*}^{i}v_{ij}^{i*}\psi_{i}^{-1} \circ \delta_{i}^{ij} =
\theta_{i}^{ij} \circ v_{i*} \psi_{i}^{-1}
$$
is trivial, so that it remains to check that the diagram
$$
\begin{CD}
v_{j*}\F_{j} & \overset{}{\longrightarrow} &
v_{j*}v_{ij*}^{j}v_{ij}^{j*}\F_{j} &
\overset{v_{ij*}\psi_{ji}}{\longrightarrow} &
v_{i*}v_{ij*}^{i}v_{ij}^{i*}\F_{i} \\
@V{v_{j*}\psi_{j}}VV @V{v_{j*}v_{ij*}^{j}v_{ij}^{j*}\psi_{j}}VV @VV{v_{i*}v_{ij*}^{i}v_{ij}^{i*}\psi_{i}}V \\
v_{j*}v_{j}^{*}\F & \underset{}{\longrightarrow} &
v_{j*}v_{ij*}^{j}v_{ij}^{j*}v_{j}^{*}\F &
\underset{\cong}{\longrightarrow} &
v_{i*}v_{ij*}^{i}v_{ij}^{i*}v_{i}^{*}\F
\end{CD}
$$
whose unadorned arrows are induced by units, whose unlabelled
isomorphism is canonical, and whose upper right horizontal is defined by (\ref{eqn.psidef}), commutes.  The left square commutes by
naturality of units, while to prove the right square commutes, it
suffices to prove that the square
\begin{equation} \label{eqn.gluecanonical}
\begin{CD}
v_{ij}^{j*}\F_{j} & \overset{\psi_{ji}}{\longrightarrow} &
v_{ij}^{i*}\F_{i} \\
@V{v_{ij}^{j*}\psi_{j}}VV @VV{v_{ij}^{i*}\psi_{i}}V \\
v_{ij}^{j*}v_{j}^{*}\F & \underset{\cong}{\longrightarrow} &
v_{ij}^{i*}v_{i}^{*}\F
\end{CD}
\end{equation}
whose unlabeled isomorphism is canonical, commutes.  To prove that (\ref{eqn.gluecanonical}) commutes, it suffices, by Proposition \ref{prop.wattone},
to prove that the diagram resulting in applying the functor
$-\otimes_{\mathcal{O}_{U_{ij}}}(-)$ to (\ref{eqn.gluecanonical})
commutes.  Upon expanding the
resulting diagram, it is straightforward to check that the commutativity of
(\ref{eqn.gluecanonical}) follows from the commutativity of the
diagram
\begin{equation} \label{eqn.diagramlastlast}
\begin{CD}
-\otimes_{\mathcal{O}_{U_{ij}}}v_{ij}^{j*}v_{j}^{*}\F &
\overset{\cong}{\longrightarrow} & u_{ij*}^{j}(-)
\otimes_{\mathcal{O}_{U_{j}}}v_{j}^{*}\F &
\overset{\cong}{\longrightarrow} & u_{j*}u_{ij*}^{j}(-)
\otimes_{\mathcal{O}_{X}}\F \\
@V{\cong}VV & & @VV{=}V \\
-\otimes_{\mathcal{O}_{U_{ij}}}v_{ij}^{i*}v_{i}^{*}\F &
\underset{\cong}{\longrightarrow} & u_{ij*}^{i}(-)
\otimes_{\mathcal{O}_{U_{i}}} v_{i}^{*}\F &
\underset{\cong}{\longrightarrow} & u_{i*}u_{ij*}^{i}(-)
\otimes_{\mathcal{O}_{X}}\F
\end{CD}
\end{equation}
whose left vertical is canonical and whose horizontal isomorphisms
are induced by the inverse of (\ref{eqn.pullback}).  The commutativity of (\ref{eqn.diagramlastlast}) follows from Lemma
\ref{lemma.peel}.
\end{proof}

\begin{cor} \label{cor.fexact}
If $F \in {\sf Bimod}_{k}(X-Y)$ is exact, then $\operatorname{pr}_{1}^{*}(-)\otimes_{\mathcal{O}_{X \times Y}}W_{\mathfrak{U}}(F)$ is
exact.
\end{cor}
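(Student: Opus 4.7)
The plan is to reduce exactness of $\operatorname{pr}_{1}^{*}(-)\otimes_{\mathcal{O}_{X\times Y}}W_{\mathfrak{U}}(F)$ to a local question on the open cover $\{V_{k}:=U_{k}\times Y\}_{k\in I}$ of $X\times Y$, then invoke Propositions \ref{prop.loc} and \ref{prop.sheafid} to identify the restriction of $W_{\mathfrak{U}}(F)$ to each $V_{k}$.

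First, since the $V_{k}$ cover $X\times Y$, a sequence in ${\sf Qcoh}(X\times Y)$ is exact if and only if each $v_{k}^{*}$ of the sequence is exact on $V_{k}$. From the equality $\operatorname{pr}_{1}v_{k}=u_{k}p_{k}$ (where $p_{k}:V_{k}\to U_{k}$) and distributivity of pullback over tensor products, there is a natural isomorphism
$$
v_{k}^{*}\bigl(\operatorname{pr}_{1}^{*}\M\otimes_{\mathcal{O}_{X\times Y}}W_{\mathfrak{U}}(F)\bigr)\;\cong\;p_{k}^{*}u_{k}^{*}\M\otimes_{\mathcal{O}_{V_{k}}}v_{k}^{*}W_{\mathfrak{U}}(F).
$$
Proposition \ref{prop.loc} identifies $v_{k}^{*}W_{\mathfrak{U}}(F)\cong W_{\mathfrak{U}\cap U_{k}}(Fu_{k*})$. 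Since $u_{k}$ is affine ($X$ separated, $U_{k}$ affine), $u_{k*}$ is exact, hence $Fu_{k*}$ is exact; as $U_{k}$ is affine, Proposition \ref{prop.wattone} produces $\G_{k}\in{\sf Qcoh}(V_{k})$ with $Fu_{k*}\cong -\otimes_{\mathcal{O}_{U_{k}}}\G_{k}$, and Proposition \ref{prop.sheafid} then gives $W_{\mathfrak{U}\cap U_{k}}(Fu_{k*})\cong\G_{k}$.

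Hence it suffices to show that for each $k$ the functor $p_{k}^{*}u_{k}^{*}(-)\otimes_{\mathcal{O}_{V_{k}}}\G_{k}$ is exact on ${\sf Qcoh}\,X$. Since $u_{k}^{*}$ is exact (open immersion), this reduces to exactness of $p_{k}^{*}(-)\otimes_{\mathcal{O}_{V_{k}}}\G_{k}$ on ${\sf Qcoh}\,U_{k}$. Now, by the very definition of $-\otimes_{\mathcal{O}_{U_{k}}}\G_{k}$,
$$
-\otimes_{\mathcal{O}_{U_{k}}}\G_{k}\;=\;q_{k*}\bigl(p_{k}^{*}(-)\otimes_{\mathcal{O}_{V_{k}}}\G_{k}\bigr),
$$
where $q_{k}:V_{k}\to Y$ is the second projection, and the left-hand side is $Fu_{k*}$, which is exact. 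Because $U_{k}$ is affine, $q_{k}$ is affine, so $q_{k*}$ is exact; moreover every affine open of $V_{k}$ is of the form $U_{k}\times W$ for $W\subset Y$ affine, so $q_{k*}$ reflects exactness of sequences of quasi-coherent sheaves. Therefore $p_{k}^{*}(-)\otimes_{\mathcal{O}_{V_{k}}}\G_{k}$ is exact, completing the argument.

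The one substantive point in this plan is the last step, where one passes from the exactness of $q_{k*}\bigl(p_{k}^{*}(-)\otimes\G_{k}\bigr)$ back to exactness of $p_{k}^{*}(-)\otimes\G_{k}$; this uses the affineness of $q_{k}$ to ensure that $q_{k*}$ both preserves and reflects exact sequences of quasi-coherent sheaves. Everything else is bookkeeping with the canonical isomorphisms and identifications already established in Sections \ref{section.basechange} and \ref{section.wattsfunctor}.
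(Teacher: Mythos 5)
Your argument is essentially identical to the paper's: restrict along $v_k^*$, use Proposition \ref{prop.loc} and Proposition \ref{prop.sheafid} to identify $v_k^*W_{\mathfrak U}(F)$ with the sheaf representing the exact functor $Fu_{k*}$, and transfer exactness back through $q_{k*}$. One assertion in your final step is false as stated, though the conclusion you draw from it is fine: it is not true that every affine open of $V_k=U_k\times Y$ has the form $U_k\times W$ (in $\mathbb A^1\times\mathbb A^1=\mathbb A^2$, the complement of the hyperbola $xy=1$ is affine but not a product). What you actually need is only that the opens $U_k\times W$, for $W\subset Y$ affine open, form an affine open cover of $V_k$, so that exactness of a sequence of quasi-coherent sheaves on $V_k$ can be tested on sections over these opens; equivalently, one can say that $q_{k*}$ is exact (since $q_k$ is affine) and faithful on quasi-coherent sheaves (also since $q_k$ is affine), and an exact faithful functor reflects exactness. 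With that correction the proof goes through, and matches the paper's — which in fact takes a marginally shorter route by observing that $\operatorname{pr}_1^*(-)\otimes W_{\mathfrak U}(F)$ is automatically right exact, so only preservation of monics needs to be checked via $q_*$.
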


\begin{proof}
We first claim that
$-\otimes_{\mathcal{O}_{U_{i}}} v_{i}^{*}W_{\mathfrak{U}}(F)$ is exact.  To prove the claim, we note that by Proposition \ref{prop.wattone},
$Fu_{i*} \cong - \otimes_{\mathcal{O}_{U_{i}}} \mathcal{F}$ for some quasi-coherent
$\mathcal{O}_{U_{i} \times Y}$-module $\mathcal{F}$.  Thus, by Proposition \ref{prop.sheafid} and Proposition \ref{prop.loc},
 $Fu_{i*} \cong -\otimes_{\mathcal{O}_{U_{i}}} v_{i}^{*}W_{\mathfrak{U}}(F)$.  The claim follows.

We now proceed to prove the corollary.  Let $p, q:U_{i} \times Y \rightarrow U_{i},Y$ denote projections.  It suffices to show that, for all $i$,
$v_{i}^{*}(\operatorname{pr}_{1}^{*}(-)\otimes_{\mathcal{O}_{X \times Y}}W_{\mathfrak{U}}(F))$ is exact.  We note that
\begin{eqnarray*}
v_{i}^{*}(\operatorname{pr}_{1}^{*}(-)\otimes_{\mathcal{O}_{X \times Y}}W_{\mathfrak{U}}(F)) & \cong & v_{i}^{*}\operatorname{pr}_{1}^{*}(-)\otimes_{\mathcal{O}_{U_{i} \times Y}}v_{i}^{*}W_{\mathfrak{U}}(F) \\
& \cong & p^{*}u_{i}^{*}(-)\otimes_{\mathcal{O}_{U_{i} \times Y}}v_{i}^{*}W_{\mathfrak{U}}(F).
\end{eqnarray*}
Thus, to complete the proof, it suffices to show that if $\phi:\M \rightarrow \N$
is monic and $V \subset Y$ is affine open, then $q_{*}(p^{*}u_{i}^{*}(\phi)\otimes_{\mathcal{O}_{U_{i} \times Y}} v_{i}^{*}W_{\mathfrak{U}}(F))(V)$ is monic.  But
$q_{*}(p^{*}u_{i}^{*}(-)\otimes_{\mathcal{O}_{U_{i} \times Y}} v_{i}^{*}W_{\mathfrak{U}}(F))$ is exact by the
claim and the corollary follows.
\end{proof}

\section{The Eilenberg-Watts Transformation} \label{section.wattstheory}
Our goal in this section is to prove the generalization of the Eilenberg-Watts Theorem mentioned in Section 1 (Theorem \ref{thm.main}).
Throughout this section, we use the fact that since $X$ is separated, every object of
${\sf Qcoh }X$ is a quotient of a flat object \cite[Lemma 1.1.4]{polish}. We
begin by constructing, for each $F$ in ${\sf Bimod}_{k}(X-Y)$, a
natural transformation
$$
\Gamma_{F}:F \longrightarrow
-\otimes_{\mathcal{O}_{X}}W_{\mathfrak{U}}(F)
$$
which we show is natural in $F$.

The construction of $\Gamma_{F}$ will allow us to describe obstructions to its being an isomorphism
(Corollary \ref{cor.gammaisom}).  It will also follow readily from the construction of $\Gamma_{F}$ that if $F \cong -\otimes_{\mathcal{O}_{X}}\F$ for some
object $\F$ in ${\sf Qcoh }X \times Y$ then $\Gamma_{F}$ is an
isomorphism (Proposition \ref{prop.recover}), and $\Gamma$ is
compatible with affine localization (Proposition
\ref{prop.compatwithaffine}).  As a consequence of this last property, we show that the kernel and cokernel of $\Gamma_{F}$
are totally global (Corollary \ref{cor.totallyglobal}). It follows
immediately that $\Gamma_{F}$ is an isomorphism if $X$ is affine or if $F$ is exact.

\subsection{Construction of the Eilenberg-Watts Transformation}
Let $F$ be an object of ${\sf Bimod}_{k}(X-Y)$.  We construct a
natural transformation
$$
\Gamma_{F}:F \longrightarrow -\otimes_{\mathcal{O}_{X}}
W_{\mathfrak{U}}(F)
$$
and show it is natural in $F$.
\newline
{\it Step 1:  We note that for any morphism $\lambda: \M
\longrightarrow \N$ in ${\sf Qcoh }X$, the canonical
morphism coming from the universal property of the kernel
$$
\pi:F(\operatorname{ker }\lambda) \longrightarrow
\operatorname{ker }F\lambda
$$
is natural in the sense that if
$$
\begin{CD}
\M & \overset{\lambda}{\longrightarrow} & \N \\
@VVV @VVV \\
\M' & \underset{\lambda'}{\longrightarrow} & \N' \\
\end{CD}
$$
commutes, then the induced maps $\iota:F(\operatorname{ker
}\lambda) \longrightarrow F(\operatorname{ker }\lambda')$ and
$\iota':\operatorname{ker }F\lambda \longrightarrow
\operatorname{ker }F\lambda'$ make the diagram
$$
\begin{CD}
F(\operatorname{ker }\lambda) & \longrightarrow &
\operatorname{ker }F\lambda \\
@V{\iota}VV @VV{\iota'}V \\
F(\operatorname{ker }\lambda') & \longrightarrow &
\operatorname{ker }F\lambda'
\end{CD}
$$
whose horizontals are the canonical morphisms, commute.}
\newline
{\it Step 2:  Let $\mathcal{L}$ be a flat object in ${\sf Qcoh
}X$.  We construct a morphism
$$
\Gamma_{F\mathcal{L}}:F(\mathcal{L}) \longrightarrow \mathcal{L}
\otimes_{\mathcal{O}_{X}}W_{\mathfrak{U}}(F)
$$
in the category ${\sf Qcoh}Y$.}  Specialize the notation preceding
Lemma \ref{lemma.descent} to the case that $S=X$ and
$W_{i}=U_{i}$.  By Lemma \ref{lemma.descent}, the morphism
$$
\mathcal{L} \longrightarrow \oplus_{i}u_{i*}u_{i}^{*}\mathcal{L}
$$
induced by unit morphisms is a kernel of
$$
\delta_{\mathcal{L}}:\oplus_{i}u_{i*}u_{i}^{*}\mathcal{L}
\longrightarrow
\oplus_{i<j}u_{i*}u_{ij*}^{i}u_{ij}^{i*}u_{i}^{*}\mathcal{L}.
$$
Let
$$
\pi_{1}:F(\mathcal{L}) \longrightarrow \operatorname{ker
}F(\delta_{\mathcal{L}})
$$
denote the morphism from Step 1.  Let $\gamma_i$ denote the composition
$$
Fu_{i*}u_{i}^{*}\mathcal{L} \longrightarrow
u_{i}^{*}\mathcal{L} \otimes_{\mathcal{O}_{U_{i}}} \F_{i}
\longrightarrow \mathcal{L}\otimes_{\mathcal{O}_{X}}v_{i*}\F_{i}
$$
whose left arrow is the canonical isomorphism from Proposition
\ref{prop.wattone} and whose right arrow is induced by
(\ref{eqn.pullback2}).  Let $\gamma_{ij}$ denote the composition
\begin{eqnarray*}
Fu_{i*}u_{ij*}^{i}u_{ij}^{i*}u_{i}^{*}\mathcal{L} &
\overset{\cong}{\longrightarrow} &
u_{ij*}^{i}u_{ij}^{i*}u_{i}^{*}\mathcal{L}
\otimes_{\mathcal{O}_{U_{i}}} \F_{i} \\
& \overset{\cong}{\longrightarrow} &
u_{ij}^{i*}u_{i}^{*}\mathcal{L}
\otimes_{\mathcal{O}_{U_{ij}}}v_{ij}^{i*}\F_{i} \\
& \overset{\cong}{\longrightarrow} & u_{i}^{*}\mathcal{L}
\otimes_{\mathcal{O}_{U_{i}}}v_{ij*}^{i}v_{ij}^{i*}\F_{i} \\
& \overset{\cong}{\longrightarrow} & \mathcal{L}
\otimes_{\mathcal{O}_{X}} v_{i*}v_{ij*}^{i}v_{ij}^{i*}\F_{i}
\end{eqnarray*}
whose first arrow is from Proposition \ref{prop.wattone}, whose
second arrow is induced by (\ref{eqn.pullback}) and whose third and fourth
arrows are induced by (\ref{eqn.pullback2}).

We first claim
\begin{equation} \label{eqn.firsteq}
\gamma_{ij} \circ F\delta_{i}^{ij} = \mathcal{L}
\otimes_{\mathcal{O}_{X}} \theta_{i}^{ij} \circ \gamma_{i},
\end{equation}
for all $i<j$, where $\theta_{i}^{ij}$ is defined in (\ref{eqn.babytheta}).  To prove the claim, consider
the following diagram
$$
\begin{CD}
Fu_{i*}u_{i}^{*}\mathcal{L} & \longrightarrow &
Fu_{i*}u_{ij*}^{i}u_{ij}^{i*}u_{i}^{*}\mathcal{L} \\
@VVV @VVV \\
u_{i}^{*}\mathcal{L} \otimes_{\mathcal{O}_{U_{i}}} \F_{i} &
\longrightarrow & u_{ij*}^{i}u_{ij}^{i*}u_{i}^{*}\mathcal{L}
\otimes_{\mathcal{O}_{U_{i}}} \F_{i} \\
@VVV @VVV \\
\mathcal{L} \otimes_{\mathcal{O}_{X}} v_{i*}\F_{i} & &
u_{ij}^{i*}u_{i}^{*}\mathcal{L}
\otimes_{\mathcal{O}_{U_{ij}}}v_{ij}^{i*}\F_{i} \\
@VVV @VVV \\
\mathcal{L} \otimes_{\mathcal{O}_{X}}
v_{i*}v_{ij*}^{i}v_{ij}^{i*}\F_{i} & \longrightarrow &
u_{i}^{*}\mathcal{L} \otimes_{\mathcal{O}_{U_{i}}}
v_{ij*}^{i}v_{ij}^{i*} \F_{i}
\end{CD}
$$
whose two top horizontals and bottom-left vertical are induced by
the units, whose top verticals are from Proposition
\ref{prop.wattone}, whose left-middle vertical is induced by
(\ref{eqn.pullback2}), whose right-middle vertical is induced by
(\ref{eqn.pullback}), whose right-bottom vertical is induced by (\ref{eqn.pullback2}) and whose bottom
horizontal is induced by the inverse of (\ref{eqn.pullback2}).  The claim will follow from the commutativity of this
diagram.  The top square
commutes by the naturality of the canonical isomorphism from
Proposition \ref{prop.wattone}.  To show that the bottom rectangle commutes,
we first split it down the diagonal via the morphism
$$
u_{i}^{*}\mathcal{L} \otimes_{\mathcal{O}_{U_{i}}} \F_{i}
\longrightarrow u_{i}^{*}\mathcal{L} \otimes_{\mathcal{O}_{U_{i}}}
v_{ij*}^{i}v_{ij}^{i*}\F_{i}
$$
induced by the unit of $(v_{ij}^{i*},v_{ij*}^{i})$.  The resulting left
subdiagram commutes by the naturality of (\ref{eqn.pullback2}),
while the right subdiagram commutes by the commutativity of
(\ref{eqn.bigcompat0}).

We next claim
$$
\gamma_{ij} \circ F\delta_{j}^{ij} = \mathcal{L}
\otimes_{\mathcal{O}_{X}} \theta_{j}^{ij} \circ \gamma_{j}.
$$
To prove the claim, consider the following diagram
$$
\begin{CD}
Fu_{j*}u_{ij*}^{j}u_{ij}^{j*}u_{j}^{*}\mathcal{L} &
\longrightarrow &
Fu_{i*}u_{ij*}^{i}u_{ij}^{i*}u_{i}^{*}\mathcal{L} \\
@VVV @VVV \\
u_{ij*}^{j}u_{ij}^{j*}u_{j}^{*}\mathcal{L}
\otimes_{\mathcal{O}_{U_{j}}} \F_{j} & &
u_{ij*}^{i}u_{ij}^{i*}u_{i}^{*}\mathcal{L}
\otimes_{\mathcal{O}_{U_{i}}} \F_{i} \\
@VVV @VVV \\
u_{ij}^{j*}u_{j}^{*}\mathcal{L} \otimes_{\mathcal{O}_{U_{ij}}}
v_{ij}^{j*}\F_{j} & & u_{ij}^{i*}u_{i}^{*}\mathcal{L}
\otimes_{\mathcal{O}_{U_{ij}}} v_{ij}^{i*}\F_{i} \\
@VVV @VVV \\
u_{j}^{*}\mathcal{L} \otimes_{\mathcal{O}_{U_{j}}}
v_{ij*}^{j}v_{ij}^{j*}\F_{j} & & u_{i}^{*}\mathcal{L}
\otimes_{\mathcal{O}_{U_{i}}} v_{ij*}^{i}v_{ij}^{i*}\F_{i} \\
@VVV @VVV \\
\mathcal{L} \otimes_{\mathcal{O}_{X}}
v_{j*}v_{ij*}^{j}v_{ij}^{j*}\F_{j} & \longrightarrow & \mathcal{L}
\otimes_{\mathcal{O}_{X}} v_{j*}v_{ij*}^{i}v_{ij}^{i*}\F_{i}
\end{CD}
$$
whose top horizontal is induced by the canonical isomorphism
$$
u_{ij}^{j*}u_{j}^{*} \overset{\cong}{\longrightarrow}
(u_{j}u_{ij}^{j})^{*} \overset{=}{\longrightarrow}
(u_{i}u_{ij}^{i})^{*} \overset{\cong}{\longrightarrow}
u_{ij}^{i*}u_{i}^{*}
$$
whose verticals are induced by (\ref{eqn.pullback}) and
(\ref{eqn.pullback2}), and whose bottom horizontal is induced by the map
$$
\psi_{ji}:v_{ij}^{j*}\F_{j} \overset{\cong}{\longrightarrow}
v_{ij}^{i*}\F_{i}
$$
defined after (\ref{eqn.psidef}).  Since (\ref{eqn.firsteq}) holds when $i$ and $j$ are
interchanged, the proof of the claim follows from the commutativity of this diagram.  This follows easily
from the definition of
$\psi_{ji}$.

Next, consider the following diagram
\begin{equation} \label{eqn.kerneldiagram}
\begin{CD}
F(\oplus_i u_{i*}u_{i}^{*}\mathcal{L}) & \overset{F\delta_{\mathcal{L}}}{\longrightarrow} & F(\oplus_{i<j}u_{i*}u_{ij*}^{i}u_{ij}^{i*}u_{i}^{*}\mathcal{L}) \\
@VVV @VVV \\
\oplus_{i} Fu_{i*}u_{i}^{*}\mathcal{L} & \longrightarrow & \oplus_{i<j}Fu_{i*}u_{ij*}^{i}u_{ij}^{i*}u_{i}^{*}\mathcal{L} \\
@V{\oplus_{i}\gamma_i}VV @VV{\oplus_{i<j}\gamma_{ij}}V \\
\oplus_i \mathcal{L} \otimes_{\mathcal{O}_{X}}v_{i*}\mathcal{F}_{i} & \longrightarrow & \oplus_{i<j} \mathcal{L}\otimes_{\mathcal{O}_{X}} v_{i*}v_{ij*}^{i}v_{ij}^{i*}\mathcal{F}_{i} \\
@VVV @VVV \\
\mathcal{L}\otimes_{\mathcal{O}_{X}} ( \oplus_{i} v_{i*}\mathcal{F}_{i}) & \underset{\mathcal{L} \otimes_{\mathcal{O}_{X}} \theta_{F}}{\longrightarrow} & \mathcal{L}\otimes_{\mathcal{O}_{X}} (\oplus_{i<j} v_{i*}v_{ij*}^{i}v_{ij}^{i*}\mathcal{F}_{i})
\end{CD}
\end{equation}
whose second horizontal is induced by the maps $F\delta_i^{jk}$,
whose third horizontal is induced by the maps $\mathcal{L}\otimes_{\mathcal{O}_{X}} \theta_{i}^{jk}$
and whose corner verticals are canonical isomorphisms.
It follows from the claims that the center square in the diagram commutes.
Since the top and bottom square of (\ref{eqn.kerneldiagram}) commute, there is an induced isomorphism
$$
\pi_{2}: \operatorname{ker }F\delta_{\mathcal{L}}
\overset{\cong}{\longrightarrow} \operatorname{ker }(\mathcal{L}
\otimes_{\mathcal{O}_{X}} \theta_{F}).
$$
Finally, since $\mathcal{L}$ is flat and $\operatorname{pr}_{2*}$
is left-exact, there is a canonical isomorphism
$$
\pi_{3}:\operatorname{ker }(\mathcal{L} \otimes_{\mathcal{O}_{X}}
\theta_{F}) \overset{\cong}{\longrightarrow} \mathcal{L}
\otimes_{\mathcal{O}_{X}} \operatorname{ker }\theta_{F}.
$$
We define
$$
\Gamma_{F\mathcal{L}} = \pi_{3} \pi_{2} \pi_{1}.
$$
\newline
{\it Step 3:  We show $\Gamma_{F}$ is natural on flats, i.e. we
show that if
$$
\psi: \mathcal{L} \longrightarrow \mathcal{L}'
$$
is a morphism of flat objects in ${\sf Qcoh }X$ then the diagram
\begin{equation} \label{eqn.naturalonflats}
\begin{CD}
F\mathcal{L} & \overset{F\psi}{\longrightarrow} & F\mathcal{L}' \\
@V{\Gamma_{F\mathcal{L}}}VV @VV{\Gamma_{F\mathcal{L}'}}V \\
\mathcal{L}\otimes_{\mathcal{O}_{X}} W_{\mathfrak{U}}(F) &
\underset{\psi \otimes_{\mathcal{O}_{X}}
W_{\mathfrak{U}}(F)}{\longrightarrow} &
\mathcal{L}'\otimes_{\mathcal{O}_{X}} W_{\mathfrak{U}}(F)
\end{CD}
\end{equation}
commutes.}  We leave it as an easy exercise for the reader to
check that the diagram
$$
\begin{CD}
F(\oplus_{i}u_{i*}u_{i}^{*}\mathcal{L}) &
\overset{F\delta_{\mathcal{L}}}{\longrightarrow} &
F(\oplus_{i<j}u_{i*}u_{ij*}^{i}u_{ij}^{i*}u_{i}^{*}\mathcal{L}) \\
@VVV @VVV \\
F(\oplus_{i}u_{i*}u_{i}^{*}\mathcal{L}') &
\underset{F\delta_{\mathcal{L}'}}{\longrightarrow} &
F(\oplus_{i<j}u_{i*}u_{ij*}^{i}u_{ij}^{i*}u_{i}^{*}\mathcal{L}')
\end{CD}
$$
whose verticals are induced by $\psi$, commutes.  Therefore, by
Step 1, the induced morphism $\psi':\operatorname{ker
}F\delta_{\mathcal{L}} \longrightarrow \operatorname{ker
}F\delta_{\mathcal{L}'}$ makes the diagram
$$
\begin{CD}
F\mathcal{L}=F\operatorname{ker }\delta_{\mathcal{L}} &
\longrightarrow & \operatorname{ker }F\delta_{\mathcal{L}} \\
@V{F\psi}VV @VV{\psi'}V \\
F\mathcal{L}'=F\operatorname{ker }\delta_{\mathcal{L}'} &
\longrightarrow & \operatorname{ker }F\delta_{\mathcal{L}'}
\end{CD}
$$
whose horizontals are canonical, commute.  Thus, the top square in the diagram
$$
\begin{CD}
F \operatorname{ker }\delta_{\mathcal{L}} & \overset{F \psi}{\longrightarrow} & F \operatorname{ker }\delta_{\mathcal{L}'} \\
@V{\pi_{1}}VV @VV{\pi_{1}}V \\
\operatorname{ker }F \delta_{\mathcal{L}} & \underset{\psi'}{\longrightarrow} & \operatorname{ker }F\delta_{\mathcal{L}'} \\
@V{\pi_{2}}VV @VV{\pi_{2}}V \\
\operatorname{ker }(\mathcal{L} \otimes_{\mathcal{O}_{X}} \theta_{F}) & \longrightarrow & \operatorname{ker }(\mathcal{L}' \otimes_{\mathcal{O}_{X}} \theta_{F}) \\
@V{\pi_{3}}VV @VV{\pi_{3}}V \\
\mathcal{L}\otimes_{\mathcal{O}_{X}} \operatorname{ker }\theta_{F} & \longrightarrow & \mathcal{L}'\otimes_{\mathcal{O}_{X}} \operatorname{ker }\theta_{F}
\end{CD}
$$
whose verticals are defined in Step 2 and whose bottom two horizontals are induced by $\psi$, commutes.  The proofs that the middle and bottom squares of this diagram commute are left as straightforward exercises.
\newline
{\it Step 4:  We show that, for each $\M$ in ${\sf Qcoh }X$ and each flat presentation
\begin{equation} \label{eqn.presentation}
\mathcal{L}_{1} \overset{\xi_{1}}{\longrightarrow} \mathcal{L}_{0}
\overset{\xi_{0}}{\longrightarrow} \M,
\end{equation}
there exists a unique morphism
$$
\gamma_{F\M}:F\M \longrightarrow \M
\otimes_{\mathcal{O}_{X}}W_{\mathfrak{U}}(F)
$$
making
\begin{equation} \label{eqn.absolutecom}
\begin{CD}
F\mathcal{L}_{0} & \overset{F\xi_{0}}{\longrightarrow} &
F\mathcal{M} \\
@V{\Gamma_{F\mathcal{L}_{0}}}VV  @VV{\gamma_{F\M}}V  \\
\mathcal{L}_{0} \otimes_{\mathcal{O}_{X}} W_{\mathfrak{U}}(F) &
\underset{\xi_{0}\otimes_{\mathcal{O}_{X}}W_{\mathfrak{U}}(F)}{\longrightarrow}
& \mathcal{M} \otimes_{\mathcal{O}_{X}} W_{\mathfrak{U}}(F)
\end{CD}
\end{equation}
commute.}  Applying $F$ to the flat presentation (\ref{eqn.presentation})
yields the first row in the diagram
\begin{equation} \label{eqn.wattlike}
\begin{CD}
F\mathcal{L}_{1} & \overset{F\xi_{1}}{\longrightarrow} & F\mathcal{L}_{0} & \overset{F\xi_{0}}{\longrightarrow} & F\mathcal{M}  \\
@V{\Gamma_{F\mathcal{L}_{1}}}VV  @VV{\Gamma_{F\mathcal{L}_{0}}}V  \\
\mathcal{L}_{1} \otimes_{\mathcal{O}_{X}}W_{\mathfrak{U}}(F) &
\underset{\xi_{1}\otimes_{\mathcal{O}_{X}}W_{\mathfrak{U}}(F)}{\longrightarrow}
& \mathcal{L}_{0} \otimes_{\mathcal{O}_{X}} W_{\mathfrak{U}}(F) &
\underset{\xi_{0}\otimes_{\mathcal{O}_{X}}W_{\mathfrak{U}}(F)}{\longrightarrow}
& \mathcal{M} \otimes_{\mathcal{O}_{X}} W_{\mathfrak{U}}(F)
\end{CD}
\end{equation}
which commutes by Step 3.  Thus, there exists a unique morphism
$$
\gamma_{F\M}:F \mathcal{M} \longrightarrow \mathcal{M}
\otimes_{\mathcal{O}_{X}} W_{\mathfrak{U}}(F)
$$
making (\ref{eqn.absolutecom}) commute.

We will show, in Step 6, that $\gamma_{F\M}$ is
independent of presentation chosen.
\newline
{\it Step 5: We show that if $\phi:\M \rightarrow \N$ is a
morphism in ${\sf Qcoh }X$, then the diagram
$$
\begin{CD}
F\M & \overset{F\phi}{\longrightarrow} & F\N \\
@V{\gamma_{F\M}}VV @VV{\gamma_{F\N}}V \\
\M\otimes_{\mathcal{O}_{X}}W_{\mathfrak{U}} & \underset{\phi
\otimes_{\mathcal{O}_{X}}W_{\mathfrak{U}}(F)}{\longrightarrow} &
\N\otimes_{\mathcal{O}_{X}}W_{\mathfrak{U}}
\end{CD}
$$
commutes.}  Suppose
$$
\mathcal{L}_{1}' \longrightarrow \mathcal{L}_{0}'
\overset{\pi'}{\longrightarrow} \N
$$
is a flat presentation for $\N$ and let $\gamma_{F\mathcal{N}}:F \mathcal{N} \longrightarrow \mathcal{N}
\otimes_{\mathcal{O}_{X}} W_{\mathfrak{U}}(F)$ denote the corresponding morphism constructed in Step 4.  Then there exists a flat presentation
\begin{equation} \label{eqn.newnewnewflat}
\mathcal{L} \longrightarrow \mathcal{L}_{0} \oplus
\mathcal{L}_{0}' \overset{ \phi \pi \oplus \pi'}{\longrightarrow}
\N
\end{equation}
for $\N$, and the corresponding morphism $\gamma_{F\N}'$ constructed
in Step 4 makes the outer circuit of the diagram
\begin{equation} \label{eqn.outerouter}
\begin{CD}
F(\mathcal{L}_{0} \oplus \mathcal{L}_{0}') & \overset{F(\pi \oplus
\operatorname{id}_{\mathcal{L}_{0}'})}{\longrightarrow} & F(\M
\oplus \mathcal{L}_{0}') & \overset{F(\phi \oplus
\pi')}{\longrightarrow} & F\N \\
@V{\Gamma_{F(\mathcal{L}_{0} \oplus \mathcal{L}_{0}')}}VV
@V{\Gamma_{F(\M \oplus \mathcal{L}_{0}')}}VV @VV{\gamma_{F\N}'}V
\\
(\mathcal{L}_{0} \oplus
\mathcal{L}_{0}')\otimes_{\mathcal{O}_{X}}W_{\mathfrak{U}}(F) &
\longrightarrow & (\M \oplus
\mathcal{L}_{0}')\otimes_{\mathcal{O}_{X}}W_{\mathfrak{U}}(F) &
\longrightarrow & \N \otimes_{\mathcal{O}_{X}}W_{\mathfrak{U}}(F)
\end{CD}
\end{equation}
whose bottom-left horizontal is induced  by $\pi \oplus
\operatorname{id}_{\mathcal{L}_{0}'}$ and whose bottom-right
horizontal is induced by $\phi \oplus \pi'$, commute.

It follows from the commutativity of the outer circuit of (\ref{eqn.outerouter}) and from Step 3 that the outer circuit of the diagram constructed by placing the diagram
\begin{equation} \label{eqn.expand1}
\begin{CD}
F \mathcal{L}_{0} \oplus F\mathcal{L}_{0}' & \overset{F \pi
\oplus {\operatorname{id}}_{F\mathcal{L}_{0}'}}{\longrightarrow} &
F\M \oplus F\mathcal{L}_{0}' \\
@V{\Gamma_{F\mathcal{L}_{0}} \oplus \Gamma_{F\mathcal{L}_{0}'}}VV @V{\gamma_{F\M} \oplus \Gamma_{F\mathcal{L}_{0}'}}VV  \\
(\mathcal{L}_{0} \otimes_{\mathcal{O}_{X}}W_{\mathfrak{U}}(F))
\oplus (\mathcal{L}_{0}'
\otimes_{\mathcal{O}_{X}}W_{\mathfrak{U}}(F)) & \longrightarrow &
(\M \otimes_{\mathcal{O}_{X}}W_{\mathfrak{U}}(F)) \oplus
(\mathcal{L}_{0}' \otimes_{\mathcal{O}_{X}}W_{\mathfrak{U}}(F))
\end{CD}
\end{equation}
whose bottom horizontal is $(\pi
\otimes_{\mathcal{O}_{X}}W_{\mathfrak{U}}(F)) \oplus
(\operatorname{id}_{\mathcal{L}_{0}'}\otimes_{\mathcal{O}_{X}}W_{\mathfrak{U}}(F))$,
to the left of the diagram
\begin{equation} \label{eqn.expand2}
\begin{CD}
F\M \oplus F\mathcal{L}_{0}' & \overset{F \phi \oplus F
\pi'}{\longrightarrow} & F\N \\
@V{\gamma_{F\M} \oplus \Gamma_{F\mathcal{L}_{0}'}}VV @VV{\gamma_{F\N}'}V \\
(\M \otimes_{\mathcal{O}_{X}}W_{\mathfrak{U}}(F)) \oplus
(\mathcal{L}_{0}' \otimes_{\mathcal{O}_{X}}W_{\mathfrak{U}}(F)) &
\longrightarrow & \N \otimes_{\mathcal{O}_{X}}W_{\mathfrak{U}}(F)
\end{CD}
\end{equation}
whose bottom horizontal is induced by $\phi
\otimes_{\mathcal{O}_{X}}W_{\mathfrak{U}}(F)$ and $\pi'
\otimes_{\mathcal{O}_{X}}W_{\mathfrak{U}}(F)$, commutes. We note also that
the diagram (\ref{eqn.expand1}) commutes since Step 4 implies that (\ref{eqn.absolutecom}) commutes.
Since the top horizontal in (\ref{eqn.expand1}) is an epimorphism, it follows that (\ref{eqn.expand2})
commutes as well. By restricting both routes of (\ref{eqn.expand2})
to $F\mathcal{L}_{0}'$ and using the fact, established in Step 4, that $\gamma_{F\N}$ is unique making the diagram
$$
\begin{CD}
F\mathcal{L}_{0}' & \overset{F\pi'}{\longrightarrow} &
F\mathcal{N} \\
@V{\Gamma_{F\mathcal{L}_{0}'}}VV  @VV{\gamma_{F\N}}V  \\
\mathcal{L}_{0}' \otimes_{\mathcal{O}_{X}} W_{\mathfrak{U}}(F) &
\underset{\pi' \otimes_{\mathcal{O}_{X}}W_{\mathfrak{U}}(F)}{\longrightarrow}
& \mathcal{N} \otimes_{\mathcal{O}_{X}} W_{\mathfrak{U}}(F)
\end{CD}
$$
commute, we have $\gamma_{F\N}=\gamma_{F\N}'$.  On the other hand,
restricting both routes of (\ref{eqn.expand2}) to $F\M$ allows us
to conclude that
$$
(\phi \otimes_{\mathcal{O}_{X}} W_{\mathfrak{U}}(F)) \gamma_{F\M} =
\gamma'_{F\N} F\phi.
$$
Step 5 follows.
\newline
{\it Step 6: We show that $\gamma_{F\mathcal{M}}$ is independent
of presentation.}  Let $\gamma_{F\M}':F\M \longrightarrow \M
\otimes_{\mathcal{O}_{X}}W_{\mathfrak{U}}(F)$ denote the morphism
constructed in Step 4 using a flat presentation
$$
\mathcal{L}_{1}' \longrightarrow \mathcal{L}_{0}' \longrightarrow
\M.
$$
Now apply Step 5 to conclude that the diagram
$$
\begin{CD}
F\M & \overset{F\operatorname{id}_{\M}}{\longrightarrow} & F\M \\
@V{\gamma_{F\M}}VV @VV{\gamma_{F\M}'}V \\
\M \otimes_{\mathcal{O}_{X}}W_{\mathfrak{U}}(F) &
\underset{\operatorname{id}_{\M}
\otimes_{\mathcal{O}_{X}}W_{\mathfrak{U}}(F)}{\longrightarrow} &
\M \otimes_{\mathcal{O}_{X}}W_{\mathfrak{U}}(F)
\end{CD}
$$
commutes.  Step 6 follows.

We define
$$
\Gamma_{F\mathcal{M}}:=\gamma_{F\mathcal{M}}.
$$
\newline
{\it Step 7: We show that $\Gamma_{F}$ is natural in $\M$.}  This
follows from Step 5 in light of the definition of $\Gamma_{F\M}$
given in Step 6.
\newline
{\it Step 8: We show $\Gamma_{F}$ is natural in $F$.} It suffices
to check that if $\mathcal{L}$ is a flat object in ${\sf Qcoh }X$
and $\eta:F \rightarrow G$ is a morphism in ${\sf Bimod}_{k}(X-Y)$
then the diagram
\begin{equation} \label{eqn.natonflat}
\begin{CD}
F(\mathcal{L}) & \overset{\eta_{\mathcal{L}}}{\longrightarrow} &
G(\mathcal{L}) \\
@V{\Gamma_{F\mathcal{L}}}VV @VV{\Gamma_{G\mathcal{L}}}V \\
\mathcal{L} \otimes_{\mathcal{O}_{X}} W_{\mathfrak{U}}(F) &
\underset{\mathcal{L}
\otimes_{\mathcal{O}_{X}}W_{\mathfrak{U}}(\eta)}{\longrightarrow}
& \mathcal{L} \otimes_{\mathcal{O}_{X}}W_{\mathfrak{U}}(G)
\end{CD}
\end{equation}
commutes.  Sufficiency follows from the right exactness of $F$.
The proof that (\ref{eqn.natonflat}) commutes is straightforward,
and we omit it.

\subsection{Properties of the Eilenberg-Watts Transformation}
As in the previous subsection, we specialize the notation preceding
Lemma \ref{lemma.descent} to the case that $S=X$ and
$W_{i}=U_{i}$.  Let $\mathcal{M}$ be an object in ${\sf Qcoh }X$.  By Lemma \ref{lemma.descent}, the morphism
$$
\mathcal{M} \longrightarrow \oplus_{i}u_{i*}u_{i}^{*}\mathcal{M}
$$
induced by unit morphisms is a kernel of
$$
\delta_{\mathcal{M}}:\oplus_{i}u_{i*}u_{i}^{*}\mathcal{M}
\longrightarrow
\oplus_{i<j}u_{i*}u_{ij*}^{i}u_{ij}^{i*}u_{i}^{*}\mathcal{M}.
$$
Throughout this subsection, $F$ is assumed to be an object in ${\sf Bimod}_{k}(X-Y)$.

\begin{prop} \label{prop.gammaonflat}
If $\mathcal{L}$ is a flat object in ${\sf Qcoh }X$, then $\Gamma_{F\mathcal{L}}$ is an isomorphism if and only if the canonical map $F\operatorname{ker }\delta_{\mathcal{L}} \rightarrow \operatorname{ker }F \delta_{\mathcal{L}}$ is an isomorphism.
\end{prop}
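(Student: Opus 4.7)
The plan is to unpack the three-step factorization $\Gamma_{F\mathcal{L}} = \pi_3 \pi_2 \pi_1$ given in Step 2 of the construction of the Eilenberg--Watts transformation and observe that $\pi_2$ and $\pi_3$ are already isomorphisms, so the proposition reduces to the assertion that $\pi_1$ is precisely the canonical comparison map in the statement.

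First I recall the factorization. By Lemma \ref{lemma.descent}, the morphism $\mathcal{L} \to \oplus_i u_{i*}u_i^*\mathcal{L}$ induced by units is a kernel of $\delta_{\mathcal{L}}$, so $\mathcal{L} \cong \operatorname{ker}\delta_{\mathcal{L}}$ canonically. The map $\pi_1 : F\mathcal{L} = F\operatorname{ker}\delta_{\mathcal{L}} \to \operatorname{ker}F\delta_{\mathcal{L}}$ is the canonical morphism from Step 1 induced by the universal property of the kernel, which is exactly the map appearing in the statement of the proposition.

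Next I note that $\pi_2$ and $\pi_3$ are isomorphisms. The map $\pi_2 : \operatorname{ker}F\delta_{\mathcal{L}} \overset{\cong}{\to} \operatorname{ker}(\mathcal{L}\otimes_{\mathcal{O}_X}\theta_F)$ is induced by the two outer squares of diagram (\ref{eqn.kerneldiagram}) in Step 2, whose corner vertical arrows $\oplus_i \gamma_i$ and $\oplus_{i<j}\gamma_{ij}$ are isomorphisms (each $\gamma_i$, $\gamma_{ij}$ is a composition of instances of the canonical isomorphism from Proposition \ref{prop.wattone} with the isomorphisms (\ref{eqn.pullback}) and (\ref{eqn.pullback2})). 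The map $\pi_3 : \operatorname{ker}(\mathcal{L}\otimes_{\mathcal{O}_X}\theta_F) \overset{\cong}{\to} \mathcal{L}\otimes_{\mathcal{O}_X}\operatorname{ker}\theta_F$ is an isomorphism because $\mathcal{L}$ is flat and $\operatorname{pr}_{2*}$ is left exact: pulling back along $\operatorname{pr}_1$ preserves kernels when the sheaf being pulled back tensors through a flat module, and then $\operatorname{pr}_{2*}$ preserves the kernel by left exactness.

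Combining, $\Gamma_{F\mathcal{L}} = \pi_3 \pi_2 \pi_1$ with $\pi_2, \pi_3$ isomorphisms, so $\Gamma_{F\mathcal{L}}$ is an isomorphism if and only if $\pi_1$ is an isomorphism, which is precisely the claim. There is no real obstacle here; the only thing to be careful about is confirming that $\pi_1$ in the construction literally coincides with the canonical map named in the statement, and that the definitions of $\pi_2$ and $\pi_3$ are forced (not merely shown to exist) so that the factorization is valid in the stated form.
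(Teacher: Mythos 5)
Your proof is correct and takes essentially the same approach as the paper: the paper's proof simply notes that $\Gamma_{F\mathcal{L}} = \pi_3\pi_2\pi_1$ with $\pi_2$, $\pi_3$ isomorphisms and $\pi_1$ equal to the canonical comparison map, which is exactly the argument you spell out in more detail.
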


\begin{proof}
The map $\Gamma_{F\mathcal{L}}$ is a composition of the canonical map $F\operatorname{ker }\delta_{\mathcal{L}} \rightarrow \operatorname{ker }F \delta_{\mathcal{L}}$ and two isomorphisms, by Step 2 of the construction of $\Gamma$.
\end{proof}
The next result follows from Proposition \ref{prop.gammaonflat} and a straightforward diagram chase.
\begin{cor} \label{cor.gammaisom}
If $F \in {\sf Bimod}_{k}(X-Y)$ then $\Gamma_{F}$ is an isomorphism if and only if
\begin{enumerate}

\item{} for all flat objects $\mathcal{L}$ in ${\sf Qcoh }X$, the canonical map $F\operatorname{ker }\delta_{\mathcal{L}} \rightarrow \operatorname{ker }F \delta_{\mathcal{L}}$ is an isomorphism, and

\item{} $-\otimes_{\mathcal{O}_{X}}W_{\mathfrak{U}}(F)$ is right exact.

\end{enumerate}
\end{cor}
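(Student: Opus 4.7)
The plan is to deduce the corollary from Proposition~\ref{prop.gammaonflat} together with the construction of $\Gamma_F$ via flat presentations (Step 4 of the construction), using a routine three-term diagram chase. Both implications reduce to combining right-exactness of $F$ and $-\otimes_{\mathcal{O}_X} W_{\mathfrak{U}}(F)$ with the criterion for $\Gamma$ to be an isomorphism on flats.

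For the forward direction, I would argue as follows. If $\Gamma_F$ is an isomorphism, then for every flat $\mathcal{L} \in {\sf Qcoh}\,X$ the component $\Gamma_{F\mathcal{L}}$ is an isomorphism, so by Proposition~\ref{prop.gammaonflat} the canonical map $F\operatorname{ker}\delta_{\mathcal{L}} \to \operatorname{ker}F\delta_{\mathcal{L}}$ is an isomorphism; this is condition (1). For condition (2), note that $F$ lies in ${\sf Bimod}_k(X-Y)$ and is therefore right exact; since $\Gamma_F:F \to -\otimes_{\mathcal{O}_X}W_{\mathfrak{U}}(F)$ is an isomorphism of functors, the target is right exact as well.

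For the reverse direction, assume (1) and (2) and fix $\mathcal{M} \in {\sf Qcoh}\,X$. Using the fact recalled at the start of Section~\ref{section.wattstheory} that every object of ${\sf Qcoh}\,X$ is a quotient of a flat, choose a flat presentation $\mathcal{L}_1 \overset{\xi_1}{\to} \mathcal{L}_0 \overset{\xi_0}{\to} \mathcal{M} \to 0$. Step~3 of the construction of $\Gamma$ and the defining diagram~(\ref{eqn.absolutecom}) of Step~4 assemble into the commutative diagram
$$
\begin{CD}
F\mathcal{L}_1 @>{F\xi_1}>> F\mathcal{L}_0 @>{F\xi_0}>> F\mathcal{M} @>>> 0 \\
@V{\Gamma_{F\mathcal{L}_1}}VV @V{\Gamma_{F\mathcal{L}_0}}VV @VV{\Gamma_{F\mathcal{M}}}V \\
\mathcal{L}_1\otimes_{\mathcal{O}_X}W_{\mathfrak{U}}(F) @>>> \mathcal{L}_0\otimes_{\mathcal{O}_X}W_{\mathfrak{U}}(F) @>>> \mathcal{M}\otimes_{\mathcal{O}_X}W_{\mathfrak{U}}(F) @>>> 0.
\end{CD}
$$
The top row is exact because $F$ is right exact, and the bottom row is exact by hypothesis~(2). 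By hypothesis~(1) and Proposition~\ref{prop.gammaonflat}, the two left-hand verticals $\Gamma_{F\mathcal{L}_0}$ and $\Gamma_{F\mathcal{L}_1}$ are isomorphisms. A standard chase (the right-exact version of the five lemma, applied to cokernels) then forces $\Gamma_{F\mathcal{M}}$ to be an isomorphism. Since $\mathcal{M}$ was arbitrary, $\Gamma_F$ is an isomorphism of functors.

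There is essentially no genuine obstacle here: the nontrivial content has already been packaged into Proposition~\ref{prop.gammaonflat} and into the compatibility of $\Gamma$ with flat presentations established in Steps~3--4 of its construction. The only point to verify carefully is that the bottom row really is exact at $\mathcal{L}_0 \otimes_{\mathcal{O}_X} W_{\mathfrak{U}}(F)$, which is exactly what hypothesis~(2) supplies, and that the top row is exact at $F\mathcal{L}_0$, which is immediate from $F$ being right exact. The diagram chase producing the isomorphism $\Gamma_{F\mathcal{M}}$ is then standard.
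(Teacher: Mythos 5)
Your proof is correct and is essentially the same argument the paper has in mind: the paper's entire justification is the single sentence that the corollary ``follows from Proposition~\ref{prop.gammaonflat} and a straightforward diagram chase,'' and your write-up simply makes that chase explicit, using the flat presentation and the commutative ladder from Steps~3--4 of the construction of $\Gamma_F$.
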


\begin{cor} \label{cor.exactvanish}
Let $F$ be a totally global, exact
functor such that $-\otimes_{\mathcal{O}_{X}}W_{\mathfrak{U}}(F)$
is right exact. Then $F=0$.
\end{cor}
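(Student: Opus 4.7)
The plan is to combine Corollary \ref{cor.gammaisom} with Proposition \ref{prop.vanish}. Since $F$ is assumed exact, it preserves kernels: for any morphism $\lambda$ in ${\sf Qcoh }X$, the canonical map $F(\ker \lambda) \to \ker F\lambda$ is an isomorphism. In particular, applying this to $\lambda = \delta_{\mathcal{L}}$ for any flat $\mathcal{L} \in {\sf Qcoh }X$ shows that condition (1) of Corollary \ref{cor.gammaisom} is satisfied. Condition (2) of Corollary \ref{cor.gammaisom} is the standing hypothesis that $-\otimes_{\mathcal{O}_{X}}W_{\mathfrak{U}}(F)$ is right exact.

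By Corollary \ref{cor.gammaisom}, $\Gamma_{F}$ is therefore an isomorphism, i.e.
\[
F \cong -\otimes_{\mathcal{O}_{X}}W_{\mathfrak{U}}(F).
\]
Now I invoke the hypothesis that $F$ is totally global: since $F$ is isomorphic to tensoring with the bimodule $W_{\mathfrak{U}}(F)$, Proposition \ref{prop.vanish} applies and yields $W_{\mathfrak{U}}(F)=0$. Combining the two displays gives $F=0$.

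No step here looks delicate; the only thing worth pausing over is the first sentence, namely that exactness of $F$ really does give an isomorphism $F(\ker \delta_{\mathcal{L}}) \overset{\cong}{\to} \ker F\delta_{\mathcal{L}}$ for \emph{all} $\mathcal{L}$ (not merely a statement about short exact sequences), but this is a standard property of exact additive functors between abelian categories. After that, the proof is a two-line composition: Corollary \ref{cor.gammaisom} to identify $F$ with $-\otimes_{\mathcal{O}_{X}}W_{\mathfrak{U}}(F)$, then Proposition \ref{prop.vanish} to kill $W_{\mathfrak{U}}(F)$.
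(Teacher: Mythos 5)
Your proof is correct and follows exactly the route the paper takes: verify both hypotheses of Corollary \ref{cor.gammaisom} (kernel condition from exactness of $F$, right-exactness of $-\otimes_{\mathcal{O}_{X}}W_{\mathfrak{U}}(F)$ by hypothesis) to get $F \cong -\otimes_{\mathcal{O}_{X}}W_{\mathfrak{U}}(F)$, then apply Proposition \ref{prop.vanish} to conclude $W_{\mathfrak{U}}(F)=0$ and hence $F=0$. The only difference is that you unpack the exactness argument a bit more explicitly; the paper states it in a single line.
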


\begin{proof}
Since $F$ is exact and $-\otimes_{\mathcal{O}_{X}}W_{\mathfrak{U}}(F)$ is right exact, $F \cong -\otimes_{\mathcal{O}_{X}}W_{\mathfrak{U}}(F)$ by Corollary \ref{cor.gammaisom}.
Thus, since $F$ is totally global, $F=0$ by Proposition
\ref{prop.vanish}.
\end{proof}

\begin{prop} \label{prop.recover}
If $F \cong -\otimes_{\mathcal{O}_{X}}\F$ for some object $\F$ in ${\sf
Qcoh }X \times Y$, then $\Gamma_{F}$ is an isomorphism.
\end{prop}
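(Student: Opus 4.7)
The strategy is to apply Corollary \ref{cor.gammaisom}, which reduces the task to verifying two conditions: (1) for every flat $\mathcal{L}\in{\sf Qcoh}X$, the canonical map $F\operatorname{ker}\delta_{\mathcal{L}}\to\operatorname{ker}F\delta_{\mathcal{L}}$ is an isomorphism, and (2) $-\otimes_{\mathcal{O}_{X}}W_{\mathfrak{U}}(F)$ is right exact. Condition (2) is immediate from Proposition \ref{prop.sheafid}: that result produces an isomorphism $W_{\mathfrak{U}}(F)\cong\F$, so $-\otimes_{\mathcal{O}_{X}}W_{\mathfrak{U}}(F)\cong -\otimes_{\mathcal{O}_{X}}\F\cong F$, which is right exact by the hypothesis $F\in{\sf Bimod}_{k}(X-Y)$.

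For condition (1), fix a flat $\mathcal{L}$ and set $\widetilde{\mathcal{L}}:=\operatorname{pr}_{1}^{*}\mathcal{L}\in{\sf Qcoh}(X\times Y)$. Combining the basechange isomorphism $\operatorname{pr}_{1}^{*}u_{i*}\cong v_{i*}p_{i}^{*}$ (where $p_{i}:U_{i}\times Y\to U_{i}$), the canonical identifications $p_{i}^{*}u_{i}^{*}\cong v_{i}^{*}\operatorname{pr}_{1}^{*}$ together with the analogous basechange and pullback identities for the morphisms $u_{ij*}^{i}$ and $u_{ij}^{i*}$, and the projection formula with $\F$, I would produce natural isomorphisms
\[
Fu_{i*}u_{i}^{*}\mathcal{L}\;\cong\;\operatorname{pr}_{2*}v_{i*}v_{i}^{*}(\widetilde{\mathcal{L}}\otimes_{\mathcal{O}_{X\times Y}}\F)
\]
and
\[
Fu_{i*}u_{ij*}^{i}u_{ij}^{i*}u_{i}^{*}\mathcal{L}\;\cong\;\operatorname{pr}_{2*}v_{i*}v_{ij*}^{i}v_{ij}^{i*}v_{i}^{*}(\widetilde{\mathcal{L}}\otimes_{\mathcal{O}_{X\times Y}}\F),
\]
identifying the sequence obtained by applying $F$ to the kernel presentation $\mathcal{L}=\operatorname{ker}\delta_{\mathcal{L}}$ of Lemma \ref{lemma.descent} with $\operatorname{pr}_{2*}$ applied to the \v{C}ech-style sequence $\delta_{\widetilde{\mathcal{L}}\otimes\F}$ on $X\times Y$ associated to the finite open cover $\{V_{i}:=U_{i}\times Y\}$ (noting that each $v_{i}$ is affine as a basechange of the affine morphism $u_{i}$, so all pushforwards preserve quasi-coherence). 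By Lemma \ref{lemma.descent} applied in ${\sf Qcoh}(X\times Y)$, the latter sequence is exact with kernel $\widetilde{\mathcal{L}}\otimes\F$; since $\operatorname{pr}_{2*}$ is left exact, the identifications then carry $F\mathcal{L}\cong\operatorname{pr}_{2*}(\widetilde{\mathcal{L}}\otimes\F)$ isomorphically onto $\operatorname{ker}F\delta_{\mathcal{L}}$, as required.

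The main obstacle is verifying that, under the object-level identifications above, the morphism $F\delta_{\mathcal{L}}$ is carried to $\operatorname{pr}_{2*}\delta_{\widetilde{\mathcal{L}}\otimes\F}$. Since $\delta_{\mathcal{L}}$ and $\delta_{\widetilde{\mathcal{L}}\otimes\F}$ are built from units of the adjoint pairs $(u_{ij}^{i*},u_{ij*}^{i})$ and $(v_{ij}^{i*},v_{ij*}^{i})$ respectively, this reduces to a compatibility between units, basechange, and the projection formula; these compatibilities are precisely what Lemmas \ref{lemma.tensorcomp}, \ref{lemma.peel}, and \ref{lemma.canonicalcompatwithbasechange} of Section 3 were designed to provide, so the map-level verification becomes a (somewhat lengthy but formal) diagram chase.
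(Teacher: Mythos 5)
Your proposal is correct in outline but takes a genuinely different route from the paper. Both of you start by invoking Corollary \ref{cor.gammaisom}, and the verification of condition (2) via Proposition \ref{prop.sheafid} is identical. The difference lies in condition (1). The paper observes that Step 2 of the construction of $\Gamma$ already produced isomorphisms $\pi_2:\operatorname{ker}F\delta_{\mathcal{L}}\to\operatorname{ker}(\mathcal{L}\otimes_{\mathcal{O}_X}\theta_F)$ and $\pi_3:\operatorname{ker}(\mathcal{L}\otimes_{\mathcal{O}_X}\theta_F)\to\mathcal{L}\otimes_{\mathcal{O}_X}W_{\mathfrak{U}}(F)$ (the latter using flatness of $\mathcal{L}$ and left exactness of $\operatorname{pr}_{2*}$), so the only remaining content is that $\pi_2^{-1}\pi_3^{-1}$ coincides with the canonical kernel-comparison morphism $\pi_1$; this reduces to a single invocation of Lemma \ref{lemma.tensorcomp}. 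You instead bypass $\theta_F$ entirely, pull everything up to $X\times Y$ by setting $\widetilde{\mathcal{L}}=\operatorname{pr}_{1}^{*}\mathcal{L}$, identify $Fu_{i*}u_i^*\mathcal{L}$ with $\operatorname{pr}_{2*}v_{i*}v_i^*(\widetilde{\mathcal{L}}\otimes\F)$ (and similarly on double overlaps) via basechange and the projection formula, apply Lemma \ref{lemma.descent} to the cover $\{U_i\times Y\}$ of $X\times Y$, and then push down by the left-exact $\operatorname{pr}_{2*}$. This is a clean and conceptually transparent picture — it makes visible that the kernel of $F\delta_{\mathcal{L}}$ is literally a \v{C}ech kernel on $X\times Y$ — but it re-derives from scratch work that the paper has already packaged into the maps $\gamma_i$, $\gamma_{ij}$, $\pi_2$, $\pi_3$. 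The cost is the diagram chase you flag at the end: you must verify not only that $F\delta_{\mathcal{L}}$ is carried to $\operatorname{pr}_{2*}\delta_{\widetilde{\mathcal{L}}\otimes\F}$ under your identifications (which uses Lemmas \ref{lemma.tensorcomp}, \ref{lemma.peel}, \ref{lemma.canonicalcompatwithbasechange}), but also that the identification $F\mathcal{L}\cong\operatorname{pr}_{2*}(\widetilde{\mathcal{L}}\otimes\F)$ is compatible with the two unit maps into the zeroth \v{C}ech terms, so that the \emph{canonical} comparison morphisms on both sides correspond; that last compatibility is of the same nature but should be stated explicitly. Finally, as the paper does, you should begin by reducing to the case $F=-\otimes_{\mathcal{O}_X}\F$ (rather than merely $F\cong-\otimes_{\mathcal{O}_X}\F$) using the naturality of $\Gamma$ in $F$, since your displayed identities treat $F$ literally as a tensor functor.
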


\begin{proof}
By the naturality of $\Gamma$ (noted in Step 8 of the construction of $\Gamma$) we may assume without loss of generality that $F=-\otimes_{\mathcal{O}_{X}}\F$.  By Proposition \ref{prop.sheafid}, $W_{\mathfrak{U}}(F) \cong \F$.
Since $F$ is right exact, so is $-\otimes_{\mathcal{O}_{X}}
W_{\mathfrak{U}}(F)$.  Hence, by Corollary \ref{cor.gammaisom}, it suffices to show that if $\mathcal{L}$ is a flat object in ${\sf Qcoh }X$, then the canonical map $F(\mathcal{L})=F(\operatorname{ker }\delta_{\mathcal{L}})
\longrightarrow \operatorname{ker }F\delta_{\mathcal{L}}$ is an isomorphism.  To prove this, we note that in Step 2 of the construction of $\Gamma$ we constructed an isomorphism
\begin{equation} \label{eqn.pi2pi3}
\pi_2^{-1} \pi_{3}^{-1}:(\operatorname{ker }\delta_{\mathcal{L}}) \otimes_{\mathcal{O}_{X}} \mathcal{F} \longrightarrow \operatorname{ker }(\delta_{\mathcal{L}} \otimes_{\mathcal{O}_{X}} \mathcal{F}).
\end{equation}
Hence, to complete the proof of the proposition, it suffices to prove that (\ref{eqn.pi2pi3}) is the canonical map induced by the universal property of the kernel.  This fact follows from Lemma \ref{lemma.tensorcomp}, as one can check.
\end{proof}

\begin{cor} \label{cor.approx}
Let $\mathcal{F}'$ be an object of ${\sf Qcoh }X \times Y$ such that $F' := -\otimes_{\mathcal{O}_{X}}\F'$ is an object in
${\sf Bimod}_{k}(X-Y)$.  If $\Phi:F \rightarrow F'$ is a morphism
in ${\sf Bimod}_{k}(X-Y)$, then $\Phi$ factors through
$\Gamma_{F}$.
\end{cor}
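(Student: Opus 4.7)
The plan is to exploit naturality of $\Gamma$ in the functor variable, combined with Proposition \ref{prop.recover}, which guarantees that $\Gamma_{F'}$ is an isomorphism whenever $F'$ is itself of the form $-\otimes_{\mathcal{O}_X}\mathcal{F}'$.

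First, I would invoke Step 8 of the construction of $\Gamma$ in Section \ref{section.wattstheory}, applied to the morphism $\Phi:F\to F'$ in ${\sf Bimod}_k(X-Y)$. This yields a commutative square
$$
\begin{CD}
F @>\Phi>> F' \\
@V\Gamma_F VV @VV\Gamma_{F'}V \\
-\otimes_{\mathcal{O}_X}W_{\mathfrak{U}}(F) @>>-\otimes_{\mathcal{O}_X}W_{\mathfrak{U}}(\Phi)> -\otimes_{\mathcal{O}_X}W_{\mathfrak{U}}(F')
\end{CD}
$$
in ${\sf Funct}_k({\sf Qcoh}\,X,{\sf Qcoh}\,Y)$, where $W_{\mathfrak{U}}(\Phi)$ is the morphism in ${\sf Qcoh}\,X\times Y$ defined in Subsection \ref{subsection.defn}.

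Next, because $F'\cong -\otimes_{\mathcal{O}_X}\mathcal{F}'$, Proposition \ref{prop.recover} applies to $F'$ and tells us that $\Gamma_{F'}$ is an isomorphism. Consequently we may invert the right vertical arrow of the square and write
$$
\Phi \;=\; \Gamma_{F'}^{-1}\;\circ\;\bigl(-\otimes_{\mathcal{O}_X}W_{\mathfrak{U}}(\Phi)\bigr)\;\circ\;\Gamma_F,
$$
which exhibits $\Phi$ as a composition whose first factor is $\Gamma_F$. Setting $\Psi := \Gamma_{F'}^{-1}\circ\bigl(-\otimes_{\mathcal{O}_X}W_{\mathfrak{U}}(\Phi)\bigr)$ gives the desired factorization $\Phi=\Psi\circ\Gamma_F$.

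There is essentially no obstacle here: every ingredient (functoriality of $W_{\mathfrak{U}}$, naturality of $\Gamma$ in $F$, and invertibility of $\Gamma_{F'}$) has already been established. The only minor point worth remarking is that one should observe that $-\otimes_{\mathcal{O}_X}W_{\mathfrak{U}}(\Phi)$ lies in ${\sf Bimod}_k(X-Y)$ because $-\otimes_{\mathcal{O}_X}W_{\mathfrak{U}}(F')\cong F'$ is right exact and commutes with direct limits, so the composition defining $\Psi$ makes sense as a morphism in the ambient functor category; but since the corollary only asserts a factorization of $\Phi$ (not a factorization inside ${\sf Bimod}_k(X-Y)$ with prescribed intermediate object), this is automatic.
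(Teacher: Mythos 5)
Your proof is correct and follows exactly the paper's argument: both proofs apply the naturality of $\Gamma$ in the functor variable to $\Phi$, then invoke Proposition \ref{prop.recover} to invert $\Gamma_{F'}$ and read off the factorization. The explicit formula $\Phi = \Gamma_{F'}^{-1}\circ\bigl(-\otimes_{\mathcal{O}_X}W_{\mathfrak{U}}(\Phi)\bigr)\circ\Gamma_F$ is precisely what the paper leaves implicit in the phrase ``the assertion follows.''
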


\begin{proof}
Since $\Gamma_{G}$ is natural in $G$, the diagram
$$
\begin{CD}
F & \overset{\Phi}{\longrightarrow} & F' \\
@V{\Gamma_{F}}VV @VV{\Gamma_{F'}}V \\
-\otimes_{\mathcal{O}_{X}}W_{\mathfrak{U}}(F) &
\underset{-\otimes_{\mathcal{O}_{X}}W_{\mathfrak{U}}(\Phi)}{\longrightarrow}
& -\otimes_{\mathcal{O}_{X}}W_{\mathfrak{U}}(F')
\end{CD}
$$
commutes.  Since $\Gamma_{F'}$ is an isomorphism by Proposition
\ref{prop.recover}, the assertion follows.
\end{proof}

\begin{prop} \label{prop.compatwithaffine}
Let $\Gamma_{F}
* u_{k*}$ denote the horizontal composition of the natural transformations $\Gamma_{F}$ and $\operatorname{id}_{u_{k*}}$.
Then $\Gamma_{F}$ is compatible with affine localization, i.e.
the diagram
\begin{equation} \label{eqn.affineloc}
\begin{CD}
Fu_{k*} & \overset{\Gamma_{F}*u_{k*}}{\longrightarrow} & u_{k*}(-)
\otimes_{{\mathcal{O}}_{X}}W_{\mathfrak{U}}(F) \\
@V{\Gamma_{Fu_{k*}}}VV  @VVV \\
-\otimes_{{\mathcal{O}_{U_{k}}}}W_{\mathfrak{U} \cap
U_{k}}(Fu_{k*}) & \longrightarrow & -
\otimes_{{\mathcal{O}}_{U_{k}}} v_{k}^{*}W_{\mathfrak{U}}(F)
\end{CD}
\end{equation}
whose bottom horizontal is induced by the isomorphism constructed
in Proposition \ref{prop.loc} and whose right vertical is induced
by the isomorphism (\ref{eqn.pullback}), commutes for all $k$.
\end{prop}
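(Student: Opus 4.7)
The plan is to verify commutativity of (\ref{eqn.affineloc}) pointwise, by first reducing to the case of flat inputs and then unpacking the explicit Step-2 formula for $\Gamma$ on both sides. Both routes in (\ref{eqn.affineloc}) are natural transformations between $k$-linear functors ${\sf Qcoh }U_{k} \rightarrow {\sf Qcoh }Y$. Since $F$ is right exact and $u_{k*}$ is exact (as $u_{k}$ is an affine open immersion), $Fu_{k*}$ is right exact. Hence, for any $\M \in {\sf Qcoh }U_{k}$ with a flat presentation $\mathcal{L}_{0} \rightarrow \M$ in ${\sf Qcoh }U_{k}$, the induced map $Fu_{k*}\mathcal{L}_{0} \rightarrow Fu_{k*}\M$ is epic, and naturality of both routes in the input reduces the verification at $\M$ to the verification at $\mathcal{L}_{0}$. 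It therefore suffices to check commutativity when $\mathcal{L} \in {\sf Qcoh }U_{k}$ is flat. The key point is that such $u_{k*}\mathcal{L}$ is then flat in ${\sf Qcoh }X$: checking on an affine open cover $\{V_{\alpha}\}$ of $X$, the intersection $V_{\alpha} \cap U_{k}$ is affine by separatedness, the ring map $\mathcal{O}_{X}(V_{\alpha}) \rightarrow \mathcal{O}_{U_{k}}(V_{\alpha} \cap U_{k})$ is flat since open immersions are flat, and flatness is transitive under restriction of scalars. Consequently, $\Gamma_{F, u_{k*}\mathcal{L}}$ admits the explicit description $\pi_{3} \pi_{2} \pi_{1}$ of Step 2 of the construction of $\Gamma$ with respect to the cover $\mathfrak{U}$, and $\Gamma_{Fu_{k*}, \mathcal{L}}$ admits the analogous description with respect to the cover $\mathfrak{U} \cap U_{k}$.

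With both sides made explicit, the instance of (\ref{eqn.affineloc}) at $\mathcal{L}$ unfolds into a large diagram assembled from the isomorphisms (\ref{eqn.pullback2}) and (\ref{eqn.pullback}), the gluing maps $\psi_{ij}$ defined in (\ref{eqn.psidef}), units and counits of the adjunctions $(u_{i}^{*}, u_{i*})$ and $(u_{ij}^{i*}, u_{ij*}^{i})$, the basechange isomorphisms $v_{k}^{*}v_{i*} \cong v_{ik*}^{k}v_{ik}^{i*}$ attached to the fiber squares formed by $(u_{i}, u_{k}, u_{ik}^{i}, u_{ik}^{k})$, and the isomorphism $\rho$ constructed in the proof of Proposition \ref{prop.loc}. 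I would dissect this into corner subdiagrams and match each to one of Lemma \ref{lemma.tensorcomp} (tensor products versus units and counits), Lemma \ref{lemma.peel} (iterated pullback), or Lemma \ref{lemma.canonicalcompatwithbasechange} (compatibility with basechange). The identification $W_{\mathfrak{U} \cap U_{k}}(Fu_{k*}) \cong v_{k}^{*}W_{\mathfrak{U}}(F)$ from Proposition \ref{prop.loc} then bridges the inner kernel on the left side of (\ref{eqn.affineloc}) with the pulled-back kernel on the right.

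The main obstacle will be bookkeeping: tracking how the descent morphism $\delta_{u_{k*}\mathcal{L}}$ for the cover $\mathfrak{U}$ corresponds under $v_{k}^{*}$ to $\delta_{\mathcal{L}}$ for the cover $\mathfrak{U} \cap U_{k}$, and how the gluing maps $\psi_{ji}$ attached to $F$ interact with those attached to $Fu_{k*}$ under this correspondence. This matching is essentially the content of Steps 3--5 of the proof of Proposition \ref{prop.loc}, so the most delicate part of the present argument should reduce to invoking those steps in conjunction with the Section \ref{section.basechange} compatibilities and the naturality, established in Step 8 of the construction of $\Gamma$, of $\Gamma$ in its functor argument.
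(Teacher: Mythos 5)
Your proposal follows essentially the same route as the paper: reduce to flat $\mathcal{L}$ in ${\sf Qcoh\,}U_{k}$ via right exactness of $Fu_{k*}$, unfold both vertical arrows of (\ref{eqn.affineloc}) using the explicit Step-2 description of $\Gamma$, and then assemble the resulting large diagram from the Section \ref{section.basechange} compatibility lemmas and Steps 3--6 of Proposition \ref{prop.loc}. Your observation that $u_{k*}\mathcal{L}$ is already flat in ${\sf Qcoh\,}X$ (which holds by the affine-local transitivity argument you give, using separatedness of $X$) is correct and in fact a small improvement: the paper instead extends the commutativity of its auxiliary diagram to arbitrary objects of ${\sf Qcoh\,}X$ by a flat-cover epimorphism argument, which your flatness remark renders unnecessary. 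The part you leave as bookkeeping is exactly where the paper expends Steps 2--4, building the intermediate diagrams and invoking Lemma \ref{lemma.canonicalcompatwithbasechange} and Step 6 of Proposition \ref{prop.loc}, so your sketch has correctly located the load-bearing ingredients even though it does not execute them.
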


\begin{proof}
We prove the result in several steps.
\newline
{\it Step 1:  We show that it suffices to prove that
(\ref{eqn.affineloc}) commutes when applied to flat objects of
${\sf Qcoh }U_{k}$.}  For, if $\pi:\mathcal{L} \rightarrow \M$ is
an epimorphism in ${\sf Qcoh }U_{k}$ where $\mathcal{L}$ flat,
then, since the arrows in (\ref{eqn.affineloc}) are natural, and
since $Fu_{k*}$ is right exact, Step 1 follows from a standard
diagram chase.
\newline
{\it Step 2:  Consider the following diagram
\begin{equation} \label{eqn.finaldig1}
\begin{CD}
F & \longrightarrow & \oplus_{i}Fu_{i*}u_{i}^{*} \\
& & @VVV \\
@V{\Gamma_{F}}VV
\oplus_{i}u_{i}^{*}(-)\otimes_{\mathcal{O}_{U_{i}}}\F_{i}
\\
& & @VVV \\
-\otimes_{\mathcal{O}_{X}}W_{\mathfrak{U}}(F) & \longrightarrow &
\oplus_{i} - \otimes_{\mathcal{O}_{X}}v_{i*}\F_{i}
\end{CD}
\end{equation}
whose top horizontal is induced by a unit, whose top vertical is
induced by the canonical isomorphism from Proposition
\ref{prop.wattone}, whose bottom vertical is induced by
(\ref{eqn.pullback2}), and whose bottom horizontal comes from the
definition of $W_{\mathfrak{U}}(F)$ as a kernel.  We note that this diagram commutes.}  We
first note that (\ref{eqn.finaldig1}) commutes on flats by the
definition of $\Gamma_{F}$.  Now, if $\M$ is an object in ${\sf
Qcoh }X$, there exists an epimorphism from a flat object
$\mathcal{L}$ in ${\sf Qcoh }X$ to $\M$.  This epimorphism induces
a map from (\ref{eqn.finaldig1}) applied to $\mathcal{L}$ to
(\ref{eqn.finaldig1}) applied to $\M$.  Since all the arrows in
(\ref{eqn.finaldig1}) are natural and the induced map
$F\mathcal{L} \rightarrow F\M$ is an epimorphism, the commutativity
of (\ref{eqn.finaldig1}) applied to $\M$ follows from a routine
diagram chase.
\newline
{\it Step 3:  Consider the following diagram
\begin{equation} \label{eqn.finaldig2}
\begin{CD}
Fu_{k*} & \longrightarrow & Fu_{k*}u_{ik*}^{k}u_{ik}^{k*} \\
@VVV @VV{=}V \\
Fu_{i*}u_{i}^{*}u_{k*} & \longrightarrow &
Fu_{i*}u_{ik*}^{i}u_{ik}^{k*} \\
@VVV @VVV \\
u_{i}^{*}u_{k*}(-)\otimes_{\mathcal{O}_{U_{i}}}\F_{i} &
\longrightarrow &
u_{ik*}^{i}u_{ik}^{k*}(-)\otimes_{\mathcal{O}_{U_{i}}} \F_{i} \\
@VVV @VVV \\
u_{k*}(-)\otimes_{\mathcal{O}_{X}}v_{i*}\F_{i} & &
u_{ik}^{k*}(-)\otimes_{\mathcal{O}_{U_{ik}}}v_{ik}^{i*}\F_{i} \\
@VVV @VVV \\
-\otimes_{\mathcal{O}_{U_{k}}}v_{k}^{*}v_{i*}\F_{i} &
\longrightarrow &
-\otimes_{\mathcal{O}_{U_{k}}}v_{ik*}^{k}v_{ik}^{i*}\F_{i}
\end{CD}
\end{equation}
whose top horizontal and top-left vertical are unit morphisms,
whose second verticals are from Propostion \ref{prop.wattone},
whose second, third and fourth horizontal are induced by
basechange, whose third left-vertical and bottom right-vertical
are induced by (\ref{eqn.pullback2}), and whose bottom-left
vertical and third right-vertical are induced by
(\ref{eqn.pullback}).  Then this diagram commutes.} The proof of the commutativity of
the top square of (\ref{eqn.finaldig2}) is routine and left to the
reader.  The commutativity of the middle square of
(\ref{eqn.finaldig2}) follows from the fact that the second
verticals are induced by the same natural transformations.  The
fact that the bottom rectangle in (\ref{eqn.finaldig2}) commutes
follows from Lemma \ref{lemma.canonicalcompatwithbasechange}.
\newline
{\it Step 4:  We complete the proof of the proposition.}
Recall that $\mathcal{E}_{i} \in {\sf Qcoh }U_{ik} \times Y$ denotes the object
corresponding to the functor $Fu_{ik*} \in {\sf Bimod}_{k}(U_{ik}-Y)$ in the proof
of Proposition \ref{prop.wattone}.  Consider the following commutative diagram
\begin{equation} \label{eqn.finaldig4}
\begin{CD}
-\otimes_{\mathcal{O}_{U_{k}}}W_{\mathfrak{U}\cap U_{k}}(Fu_{k*})
& \longrightarrow &
\oplus_{i}-\otimes_{\mathcal{O}_{U_{k}}}v_{ik*}^{k}\E_{i} &
\longrightarrow &
\oplus_{i}u_{ik}^{k*}(-)\otimes_{\mathcal{O}_{U_{ik}}}\E_{i} \\
@A{\Gamma_{Fu_{k*}}}AA & & @VVV \\
Fu_{k*} & \underset{=}{\longrightarrow} & Fu_{k*} &
\longrightarrow & \oplus_{i}Fu_{k*}u_{ik*}^{k}u_{ik}^{k*} \\
& & @VVV  @VV{=}V \\
& & \oplus_{i}Fu_{i*}u_{i}^{*}u_{k*} & &
\oplus_{i}Fu_{i*}u_{ik*}^{i}u_{ik}^{k*} \\
@V{\Gamma_{F}*u_{k*}}VV @VVV @VVV \\
& &
\oplus_{i}u_{i}^{*}u_{k*}(-)\otimes_{\mathcal{O}_{U_{i}}}\F_{i} &
 & \oplus_{i}
u_{ik*}^{i}u_{ik}^{k*}(-)\otimes_{\mathcal{O}_{U_{i}}}\F_{i} \\
& & @VVV @VVV \\
u_{k*}(-)\otimes_{\mathcal{O}_{X}}W_{\mathfrak{U}}(F) &
\longrightarrow &
\oplus_{i}u_{k*}(-)\otimes_{\mathcal{O}_{X}}v_{i*}\F_{i} & &
\oplus_{i}u_{ik}^{k*}(-)
\otimes_{\mathcal{O}_{U_{ik}}}v_{ik}^{i*}\F_{i} \\
@VVV @VVV  @VVV \\
-\otimes_{\mathcal{O}_{U_{k}}}v_{k}^{*}W_{\mathfrak{U}}(F) &
\longrightarrow & \oplus_{i}
-\otimes_{\mathcal{O}_{U_{k}}}v_{k}^{*}v_{i*}\F_{i} &
\longrightarrow &
\oplus_{i}-\otimes_{\mathcal{O}_{U_{k}}}v_{ik*}^{k}v_{ik}^{i*}\F_{i}
\end{CD}
\end{equation}
whose upper and middle-left rectangle are (\ref{eqn.finaldig1}),
whose lower-right rectangle is (\ref{eqn.finaldig2}) and whose
lower-left square has verticals induced by (\ref{eqn.pullback})
and horizontals induced by the inclusion
\begin{equation} \label{eqn.inclusion}
W_{\mathfrak{U}}(F) \longrightarrow \oplus_{i} v_{i*}\F_{i}.
\end{equation}
It follows from Step 2, Step 3, and the naturality of
(\ref{eqn.pullback}) that all squares in this diagram commute.

Next, we consider the following commutative diagram
\begin{equation} \label{eqn.finaldig5}
\begin{CD}
-\otimes_{\mathcal{O}_{U_{k}}}W_{\mathfrak{U}\cap U_{k}}(Fu_{k*})
& \overset{=}{\longrightarrow} &
-\otimes_{\mathcal{O}_{U_{k}}}W_{\mathfrak{U}\cap U_{k}}(Fu_{k*})
\\
& & @AA{\Gamma_{Fu_{k*}}}A \\
& & Fu_{k*} \\
@V{-\otimes_{\mathcal{O}_{U_{k}}}\rho}VV @VV{\Gamma_{F}*u_{k*}}V \\
& & u_{k*}(-) \otimes_{\mathcal{O}_{X}} W_{\mathfrak{U}}(F) \\
& & @VVV \\
-\otimes_{\mathfrak{O}_{U_{k}}}v_{k}^{*}W_{\mathfrak{U}}(F) &
\underset{=}{\longrightarrow} &
-\otimes_{\mathfrak{O}_{U_{k}}}v_{k}^{*}W_{\mathfrak{U}}(F)
\end{CD}
\end{equation}
whose bottom-right vertical is induced by (\ref{eqn.pullback}).
The outside of the diagram formed by placing this diagram to
the left of (\ref{eqn.finaldig4}) commutes by Step 6 of Proposition \ref{prop.loc}. Since
(\ref{eqn.finaldig5}) equals (\ref{eqn.affineloc}), and since the
map
$$
-\otimes_{\mathcal{O}_{U_{k}}}v_{k}^{*}W_{\mathfrak{U}}(F)
\longrightarrow
\oplus_{i}(-)\otimes_{\mathcal{O}_{U_{k}}}v_{k}^{*}v_{i*}\F_{i}
$$
induced by (\ref{eqn.inclusion}) is monic on flat objects, we
conclude, by a straightforward diagram chase on the diagram
constructed by placing (\ref{eqn.finaldig5}) to the left of
(\ref{eqn.finaldig4}), that (\ref{eqn.affineloc}) commutes on flat
objects. The proposition follows from Step 1.
\end{proof}

\begin{cor} \label{cor.totallyglobal}
If $F$ is an object of ${\sf Bimod}_{k}(X-Y)$ then
$\operatorname{ker }\Gamma_{F}$ and $\operatorname{cok
}\Gamma_{F}$ are totally global.  In particular, if $X$ is affine,
then $\Gamma_{F}$ is an isomorphism.
\end{cor}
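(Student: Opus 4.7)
The plan is to show that for every open immersion $u:U\to X$ with $U$ affine, the horizontal composite natural transformation $\Gamma_F*u_*:Fu_* \to u_*(-)\otimes_{\mathcal{O}_X}W_{\mathfrak{U}}(F)$ is an isomorphism. Once this is granted, since kernels and cokernels in a functor category are computed pointwise and commute with precomposition, we obtain $(\ker\Gamma_F)u_* = \ker(\Gamma_F*u_*) = 0$ and $(\operatorname{cok}\Gamma_F)u_* = \operatorname{cok}(\Gamma_F*u_*) = 0$, which is precisely the assertion that both $\ker\Gamma_F$ and $\operatorname{cok}\Gamma_F$ are totally global. The ``in particular'' clause follows at once: when $X$ is affine, taking $u=\operatorname{id}_X$ in the totally global property gives $\ker\Gamma_F = 0 = \operatorname{cok}\Gamma_F$, so $\Gamma_F$ is an isomorphism.

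To establish that $\Gamma_F*u_*$ is an isomorphism I would appeal to the affine-localization diagram (\ref{eqn.affineloc}) of Proposition \ref{prop.compatwithaffine}, applied with the given $u$ in place of $u_k$. Three of its four arrows are isomorphisms: the bottom horizontal is an isomorphism by Proposition \ref{prop.loc} (applied to the induced affine open cover $\{U\cap U_i\}_{i\in I}$ of $U$); the right vertical is an isomorphism by the canonical map (\ref{eqn.pullback}); and the left vertical $\Gamma_{Fu_*}$ is an isomorphism because, since $U$ is affine, Proposition \ref{prop.wattone} provides $Fu_* \cong -\otimes_{\mathcal{O}_U}\mathcal{F}$ for some $\mathcal{F}\in {\sf Qcoh }(U\times Y)$, and Proposition \ref{prop.recover} then forces $\Gamma_{Fu_*}$ to be an isomorphism. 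Commutativity of (\ref{eqn.affineloc}) makes $\Gamma_F*u_*$ an isomorphism as well.

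The one subtlety, and the main obstacle, is that Propositions \ref{prop.loc} and \ref{prop.compatwithaffine} are stated only for $u_k$ belonging to the chosen cover $\mathfrak{U}$, whereas the argument above requires them for an arbitrary open immersion $u:U\to X$ with $U$ affine. This turns out to be a notational rather than substantive issue: inspection of the proofs of those propositions shows that membership of $u_k$ in $\mathfrak{U}$ is never used; the arguments rely only on $u$ being an open immersion from an affine scheme, together with the fibre products $U\cap U_i$ of $u$ with the members of $\mathfrak{U}$, and so extend verbatim to any such $u$.
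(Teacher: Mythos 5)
Your proof is correct in outline but takes a genuinely different route from the paper's. The paper derives total globality of $\operatorname{ker}\Gamma_F$ and $\operatorname{cok}\Gamma_F$ from Proposition \ref{prop.single}, which says that a single affine open cover already detects total globality; it therefore only needs $\Gamma_F * u_{i*}$ to be an isomorphism for $u_i$ ranging over the \emph{fixed} finite cover $\mathfrak{U}$, which is exactly the setting Propositions \ref{prop.loc} and \ref{prop.compatwithaffine} cover. You instead go straight to the definition of ``totally global,'' which requires $\Gamma_F * u_*$ to be an isomorphism for \emph{every} affine open immersion $u:U\to X$. This forces the subtlety you honestly flag: Propositions \ref{prop.loc} and \ref{prop.compatwithaffine} are formulated only for $u_k\in\mathfrak{U}$, and you must assert, rather than verify, that their proofs extend verbatim to arbitrary $u$. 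The assertion is plausible --- those proofs appear to use only that $u_k:U_k\to X$ is an affine open immersion together with the induced cover $\{U_i\cap U_k\}_{i}$ of $U_k$ --- but that is precisely the check Proposition \ref{prop.single} allows the paper to skip. A modest bonus of your route is that you invoke only the definition of total globality and never need $\operatorname{ker}\Gamma_F$ or $\operatorname{cok}\Gamma_F$ to lie in ${\sf Bimod}_k(X-Y)$, a right-exactness hypothesis Proposition \ref{prop.single} formally carries. Your treatment of the affine case via $u=\operatorname{id}_X$ agrees with the paper's.
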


\begin{proof}
By Proposition \ref{prop.single}, it suffices to show that
$(\operatorname{ker }\Gamma_{F})u_{i*}$ and $(\operatorname{cok
}\Gamma_{F})u_{i*}$ equal $0$ for all $i$.  To this end, we
compute

\begin{eqnarray*}
(\operatorname{ker }\Gamma_{F})u_{i*} & = & \operatorname{ker
}(\Gamma_{F}*u_{i*}) \\
& \cong & \operatorname{ker }\Gamma_{Fu_{i*}} \\
& = & 0
\end{eqnarray*}
where the second line follows from Proposition
\ref{prop.compatwithaffine}, and the third follows from the fact
that since $Fu_{i*} \cong -\otimes_{\mathcal{O}_{U_{i}}}\F_{i}$ by
Proposition \ref{prop.wattone}, $\Gamma_{Fu_{i*}}$ is an
isomorphism by Proposition \ref{prop.recover}.

A similar proof establishes the fact that $\operatorname{cok
}\Gamma_{F}$ is totally global.

The last statement follows from the fact that if $X$ is affine, every totally global functor from ${\sf Qcoh }X$ is $0$.
\end{proof}

From now on, we fix a finite affine open cover $\mathfrak{U}$ of
$X$ and write $W$ for $W_{\mathfrak{U}}$.






\begin{cor} \label{thm.cokvanish}
If $F$ is an exact functor in ${\sf Bimod}_{k}(X-Y)$, then $\Gamma_{F}$ is an isomorphism.
\end{cor}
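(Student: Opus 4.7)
The plan is to show $\Gamma_{F\mathcal{M}}$ is an isomorphism for every $\mathcal{M}$ in ${\sf Qcoh }X$ by presenting both $F\mathcal{M}$ and $\mathcal{M}\otimes_{\mathcal{O}_{X}}W(F)$ as kernels of the same shape, and then verifying that the naturality ladder for $\Gamma_{F}$ is already an isomorphism on the two columns beyond the kernel.

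First I observe that $-\otimes_{\mathcal{O}_{X}}W(F)$ is left exact. Indeed, by Corollary \ref{cor.fexact} the functor $\operatorname{pr}_{1}^{*}(-)\otimes_{\mathcal{O}_{X\times Y}}W(F)$ is exact, and $\operatorname{pr}_{2*}$ is left exact (as a right adjoint), so their composition is left exact. Now apply Lemma \ref{lemma.descent} to the fixed finite affine open cover $\mathfrak{U}=\{U_{i}\}$ of $X$ (each $U_{ij}=U_{i}\cap U_{j}$ is affine since $X$ is separated): every $\mathcal{M}$ in ${\sf Qcoh }X$ is the kernel of
$$
\delta_{\mathcal{M}}:\bigoplus_{i}u_{i*}u_{i}^{*}\mathcal{M}\longrightarrow \bigoplus_{i<j}u_{i*}u_{ij*}^{i}u_{ij}^{i*}u_{i}^{*}\mathcal{M}.
$$
Because $F$ is exact (by hypothesis) and $-\otimes_{\mathcal{O}_{X}}W(F)$ is left exact (just shown), and because both preserve finite direct sums, I obtain $F\mathcal{M}\cong \ker F\delta_{\mathcal{M}}$ and $\mathcal{M}\otimes_{\mathcal{O}_{X}}W(F)\cong \ker(\delta_{\mathcal{M}}\otimes_{\mathcal{O}_{X}}W(F))$. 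Naturality of $\Gamma_{F}$ then supplies a commutative ladder whose two left-exact rows are these kernel presentations and whose left column is $\Gamma_{F\mathcal{M}}$.

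It now suffices to check that the middle and right verticals of this ladder are isomorphisms, because then $\Gamma_{F\mathcal{M}}$, being the induced map on kernels, must also be an isomorphism. Each summand of the middle column is $\Gamma_{F}$ evaluated at $u_{i*}\mathcal{N}$ for some $U_{i}\in\mathfrak{U}$; each summand of the right column is $\Gamma_{F}$ evaluated at $u_{ij*}\mathcal{N}'=u_{i*}(u_{ij*}^{i}\mathcal{N}')$, which is again $\Gamma_{F}$ at $u_{i*}$ of something. Proposition \ref{prop.compatwithaffine} fits the horizontal composition $\Gamma_{F}*u_{i*}$ into a commutative square with $\Gamma_{Fu_{i*}}$ and the isomorphisms of Proposition \ref{prop.loc} and (\ref{eqn.pullback}); since $U_{i}$ is affine, Proposition \ref{prop.wattone} gives $Fu_{i*}\cong -\otimes_{\mathcal{O}_{U_{i}}}\F_{i}$, and Proposition \ref{prop.recover} then shows $\Gamma_{Fu_{i*}}$ is an isomorphism. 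Hence $\Gamma_{F}*u_{i*}$ is an isomorphism for every $i$, the middle and right verticals of the ladder are isomorphisms, and $\Gamma_{F\mathcal{M}}$ is an isomorphism.

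The substantive input is Corollary \ref{cor.fexact}: without left exactness of $-\otimes_{\mathcal{O}_{X}}W(F)$ one cannot express $\mathcal{M}\otimes_{\mathcal{O}_{X}}W(F)$ as a kernel parallel to the kernel description of $F\mathcal{M}$, and the reduction to pushforwards from affine opens collapses. Everything else is naturality of $\Gamma_{F}$ together with the already-established fact that $\Gamma_{F}$ becomes an isomorphism after precomposition with $u_{i*}$.
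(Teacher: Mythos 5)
Your proof is correct and follows essentially the same route as the paper's: you read off left exactness of $-\otimes_{\mathcal{O}_X} W(F)$ from Corollary \ref{cor.fexact}, present both $F\M$ and $\M\otimes_{\mathcal{O}_X}W(F)$ as kernels of $F\delta_\M$ and $\delta_\M\otimes W(F)$ via Lemma \ref{lemma.descent} and the exactness hypotheses, and then use Propositions \ref{prop.compatwithaffine}, \ref{prop.loc}, \ref{prop.wattone}, and \ref{prop.recover} to see that $\Gamma_F$ is an isomorphism on the two columns of pushforwards from affines. The paper compresses that last step into a single citation of Proposition \ref{prop.compatwithaffine} (the unrolling you give is the one used in the proof of Corollary \ref{cor.totallyglobal}), but the logic is identical.
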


\begin{proof}
Let $\M$ be a quasi-coherent $\mathcal{O}_{X}$-module and let
$$
\delta_{\M}:\oplus_{i}u_{i*}u_{i}^{*}\M \longrightarrow
\oplus_{i<j}u_{i*}u_{ij*}^{i}u_{ij}^{i*}u_{i}^{*}\M
$$
denote the morphism defined by (\ref{eqn.delta1}).  By Proposition \ref{prop.compatwithaffine}, the natural transformation $\Gamma_{F}$
applied to each term of $\delta_{\M}$ is an isomorphism.  Thus, the canonical morphism
$\operatorname{ker }F(\delta_{\M}) \longrightarrow \operatorname{ker }(\delta_{\M}\otimes_{\mathcal{O}_{X}}W(F))$ is an isomorphism.  Since $F$ is
exact, the canonical morphism $F(\operatorname{ker }\delta_{\M}) \longrightarrow \operatorname{ker }F(\delta_{\M})$ is an isomorphism.  On the other hand,
by Corollary \ref{cor.fexact}, $F$ exact implies that $-\otimes_{\mathcal{O}_{X}}W(F)$ is left exact.  Therefore, the canonical morphism
$(\operatorname{ker }\delta_{\M}) \otimes_{\mathcal{O}_{X}}W(F) \longrightarrow \operatorname{ker }(\delta_{\M} \otimes_{\mathcal{O}_{X}} W(F))$ is an isomorphism.
The result now follows from Lemma \ref{lemma.descent}.
\end{proof}

\section{A Structure Theorem for Totally Global Functors in ${\sf bimod}_{k}(\mathbb{P}^{1}-\mathbb{P}^{0})$}
\label{section.structure}

The purpose of this section is to compute the structure of totally global functors in ${\sf
bimod}_{k}(\mathbb{P}^{1}-\mathbb{P}^{0})$ when $k$ is
algebraically closed.

Throughout this section, we let $k$ be an algebraically closed
field, we assume $X$ and $Y$ are noetherian, and we let
$$
{\sf funct}_{k}({\sf Qcoh}X,{\sf Qcoh}Y)
$$
denote the category of $k$-linear functors from ${\sf Qcoh }X$ to
${\sf Qcoh}Y$ which take coherent objects to coherent objects.  If
$F$ is an object of ${\sf funct}_{k}({\sf Qcoh}X,{\sf Qcoh}Y)$, we
let $F|_{{\sf coh} X}$ denote the restriction of $F$ to the full
subcategory of ${\sf Qcoh }X$ consisting of coherent objects.

In order to simplify the exposition, we introduce the concept of
an {\it admissible} functor.

\begin{defn} \label{defn.admissible}
Suppose $X$ is a projective variety with very ample invertible
sheaf $\mathcal{O}(1)$.  A nonzero object $F$ in ${\sf
funct}_{k}({\sf Qcoh }X, {\sf Qcoh }Y)$ is called an {\it
admissible functor} if it
\begin{enumerate}
\item is totally global \item is half-exact on vector-bundles,
\item commutes with direct limits, and \item has the property that $F\alpha$ is epic for
all nonzero $\alpha \in \Hom(\mathcal{O}(m),\mathcal{O}(n))$.
\end{enumerate}
\end{defn}
For $i \in \mathbb{Z}$, the functor $H^{1}(\mathbb{P}^{1}, (-)(i))$ is admissible.

Our main result in this section (Proposition
\ref{propo.split}, Corollary \ref{cor.extensionsplit}) is that an admissible functor $F \in {\sf
funct}_{k}({\sf Qcoh }\mathbb{P}^{1}, {\sf Qcoh }\mathbb{P}^{0})$
admits a split monic
$$
\Delta: H^{1}(\mathbb{P}^{1}, (-)(i)) \longrightarrow F
$$
for some $i \in \mathbb{Z}$.  This
allows us to prove (Theorem \ref{thm.structureoftg}) that every
admissible functor in ${\sf funct}_{k}({\sf Qcoh }\mathbb{P}^{1},
{\sf Qcoh }\mathbb{P}^{0})$ is a direct sum of cohomologies.  Since a non-zero,
totally global functor $F \in {\sf bimod}_{k}(\mathbb{P}^{1}-\mathbb{P}^{0})$ is admissible (Corollary
\ref{cor.bimodadmis}), the same holds for such functors.

\begin{lemma} \label{lemma.babydim}
Let $X$ be a projective variety with very ample invertible sheaf
$\mathcal{O}(1)$, and suppose $F \in {\sf funct}_{k}({\sf Qcoh}X, {\sf
Qcoh}\mathbb{P}^{0})$ is right exact and vanishes on coherent torsion modules.
Then $F$ satisfies (4) in Definition \ref{defn.admissible}.
\end{lemma}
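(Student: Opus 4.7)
The plan is to analyze the cokernel of a nonzero morphism $\alpha:\mathcal{O}(m) \to \mathcal{O}(n)$ and use right exactness of $F$ together with the vanishing hypothesis on coherent torsion modules.

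First I would argue that any nonzero $\alpha:\mathcal{O}(m) \to \mathcal{O}(n)$ is injective. Since $X$ is a projective variety, it is integral, so $\mathcal{O}(m)$ is torsion-free. The morphism $\alpha$ corresponds to a global section of $\mathcal{H}om(\mathcal{O}(m),\mathcal{O}(n)) \cong \mathcal{O}(n-m)$, and its stalk at the generic point $\eta$ of $X$ is nonzero because $\alpha \ne 0$. Consequently $\alpha_\eta$ is injective (as it is a nonzero map of one-dimensional vector spaces over the function field $K(X)$), so $\ker(\alpha)$ has zero generic stalk. Because $\ker(\alpha)$ is a subsheaf of the torsion-free sheaf $\mathcal{O}(m)$, this forces $\ker(\alpha)=0$.

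Next I would examine $Q := \operatorname{cok}(\alpha)$, fitting into a short exact sequence
\[
0 \longrightarrow \mathcal{O}(m) \xrightarrow{\alpha} \mathcal{O}(n) \longrightarrow Q \longrightarrow 0.
\]
This $Q$ is coherent as a quotient of a coherent sheaf. If $\alpha$ is an isomorphism, then $Q=0$ and $F\alpha$ is itself an isomorphism, hence epic. Otherwise $\alpha$ fails to be surjective, and the support of $Q$ is the vanishing locus of the associated section of $\mathcal{O}(n-m)$, which is a proper closed subscheme of the integral scheme $X$. Hence $Q_\eta = 0$, so $Q$ is a coherent torsion $\mathcal{O}_X$-module.

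Finally, applying the right exact functor $F$ to the displayed sequence yields an exact sequence
\[
F(\mathcal{O}(m)) \xrightarrow{F\alpha} F(\mathcal{O}(n)) \longrightarrow F(Q) \longrightarrow 0,
\]
and the hypothesis that $F$ vanishes on coherent torsion modules gives $F(Q)=0$. Therefore $F\alpha$ is epic, which verifies condition (4) of Definition \ref{defn.admissible}. The only subtlety is the identification of $Q$ as torsion; once the integrality of $X$ is invoked this is immediate, so I do not anticipate any substantial obstacle.
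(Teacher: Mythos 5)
Your proof is correct and takes essentially the same approach as the paper: reduce to showing that a nonzero $\alpha:\mathcal{O}(m)\to\mathcal{O}(n)$ is injective with torsion cokernel, then apply right exactness of $F$ together with $F(\text{torsion})=0$. The one cosmetic difference is that you carry out the injectivity and torsion checks at the generic point, whereas the paper argues via affine charts and stalks, but both reductions rest on the same use of integrality of $X$.
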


\begin{proof}
If $\alpha \in \Hom(\mathcal{O}(m),\mathcal{O}(n))$ and either $F\mathcal{O}(n)=0$ or $m > n$, then
$\alpha=0$.  If $m=n$, and $\alpha$ is not zero, them $\alpha$ is an isomorphism so that $F\alpha$ is epic.
Thus, suppose $F\mathcal{O}(n) \neq 0$, let $m < n$ and let $\alpha \in
\Hom(\mathcal{O}(m),\mathcal{O}(n))$ be nonzero.  We first show
that the  kernel of $\alpha$ must be zero.  If not, pick an affine
open cover over which both $\mathcal{O}(m)$ and $\mathcal{O}(n)$
are free.  Over one of these sets, $U$, $\operatorname{ker
}\alpha$ is nonzero.  Since $\alpha(U)$ is just multiplication by
some element of $\mathcal{O}(U)$, and since $X$ is integral,
$\alpha(U)$ must be the zero map.  Therefore, $U \subset
\operatorname{Supp }\operatorname{ker}\alpha$. On the other hand,
since $\operatorname{ker }\alpha$ is coherent, its support is
closed in $X$.  Since $X$ is integral, the support of
$\operatorname{ker }\alpha$ must equal $X$.  But the support of
$\operatorname{ker }\alpha$ is disjoint from the set of points $p
\in X$ such that $\alpha_{p} \neq 0$, since this map is just
multiplication by a nonzero element of a domain.  We conclude that
the kernel of $\alpha$ equals $0$.

The cokernel of $\alpha_{p}$ is a torsion
$\mathcal{O}_{X,p}$-module for all $p$.   We conclude that the
cokernel of $\alpha$ is torsion.  Therefore, there is an exact
sequence
$$
0 \rightarrow \mathcal{O}(m) \overset{\alpha}{\rightarrow}
\mathcal{O}(n) \rightarrow \mathcal{T} \rightarrow 0
$$
with $\mathcal{T}$ torsion.  Hence $\operatorname{dim
}F\mathcal{O}(m)  \geq \operatorname{dim }F\mathcal{O}(n)$ by the
right exactness of $F$ and by the fact that $F \mathcal{T}=0$.
\end{proof}

\begin{cor} \label{cor.bimodadmis}
If $F \in {\sf bimod}_{k}(\mathbb{P}^{1}-\mathbb{P}^{0})$ is
non-zero and totally global, then $F$ is admissible.
\end{cor}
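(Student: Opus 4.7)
The plan is to verify the four conditions in Definition \ref{defn.admissible} one by one. Conditions (1) and (3) are essentially hypotheses: (1) is assumed, and (3) holds because ${\sf bimod}_{k}(\mathbb{P}^{1}-\mathbb{P}^{0})$ is by definition a full subcategory of ${\sf Bimod}_{k}(\mathbb{P}^{1}-\mathbb{P}^{0})$, whose objects commute with direct limits. The nonvanishing requirement in Definition \ref{defn.admissible} is given.

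For condition (2), I would simply observe that $F$ is right exact as an element of ${\sf Bimod}_{k}$. Consequently, for any short exact sequence $0 \to \mathcal{A} \to \mathcal{B} \to \mathcal{C} \to 0$ (in particular, of vector bundles on $\mathbb{P}^{1}$), the induced sequence $F\mathcal{A} \to F\mathcal{B} \to F\mathcal{C} \to 0$ is exact. Exactness at $F\mathcal{B}$ is precisely the half-exactness requirement, so $F$ is in fact half-exact on all of ${\sf Qcoh}\,\mathbb{P}^{1}$, not merely on vector bundles.

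For condition (4), I would invoke Lemma \ref{lemma.babydim}. Since $F$ is right exact, it remains only to verify that $F$ vanishes on coherent torsion sheaves on $\mathbb{P}^{1}$. Any such sheaf has zero-dimensional support, i.e.\ is supported on a finite set of closed points; choosing any closed point $p$ outside this set, the support is contained in the affine open $U = \mathbb{P}^{1} \setminus \{p\} \cong \mathbb{A}^{1}$. Since $\mathbb{P}^{1}$ is noetherian and $F$ is totally global, Lemma \ref{lemma.totglob1} yields $F\mathcal{M} = 0$ for every coherent torsion $\mathcal{O}_{\mathbb{P}^{1}}$-module $\mathcal{M}$, as needed.

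There is no real obstacle to overcome here; the corollary is essentially a matter of unpacking the definitions and combining Lemma \ref{lemma.totglob1} with Lemma \ref{lemma.babydim}. The mildest subtlety is the geometric fact that finitely many closed points in $\mathbb{P}^{1}$ always fit inside an affine chart, which is what makes Lemma \ref{lemma.totglob1} strong enough to kill every coherent torsion sheaf.
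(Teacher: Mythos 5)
Your proposal is correct and follows essentially the same route as the paper: the paper's proof is a two-liner that invokes Lemma \ref{lemma.totglob1} (to see that $F$ vanishes on coherent torsion sheaves) and then Lemma \ref{lemma.babydim} (to get condition (4) of Definition \ref{defn.admissible}), taking conditions (1)--(3) as read from the hypotheses and the definition of ${\sf bimod}_{k}$. You merely make explicit what the paper leaves implicit, in particular the geometric fact that a coherent torsion sheaf on $\mathbb{P}^{1}$ is supported on finitely many closed points and hence fits inside an affine chart, which is needed to apply Lemma \ref{lemma.totglob1}.
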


\begin{proof} Since $F \in {\sf funct}_{k}({\sf Qcoh
}\mathbb{P}^{1},{\sf Qcoh }\mathbb{P}^{0})$ is totally global, $F$
vanishes on coherent torsion modules by Lemma \ref{lemma.totglob1}.  Therefore, $F$ is admissible by Lemma
\ref{lemma.babydim}.
\end{proof}

\subsection{Subfunctors of Admissible Functors}
In this subsection we prove that if $F \in {\sf funct}_{k}({\sf
Qcoh }\mathbb{P}^{1}, {\sf Qcoh }\mathbb{P}^{0})$ is admissible,
it has a subfunctor isomorphic to $H^{1}(\mathbb{P}^{1},(-)(i))$
for some integer $i$.  We begin with some preliminary results.

\begin{lemma} \label{lemma.dim}
Let $X$ be a projective variety with very ample invertible sheaf
$\mathcal{O}(1)$ such that for all $i>0$, we have
$$
\operatorname{dim }_{k} \Gamma(X,\mathcal{O}(i)) > 1.
$$
If $F \in {\sf funct}_{k}({\sf Qcoh}X, {\sf
Qcoh}\mathbb{P}^{0})$ satisfies (4) in Definition \ref{defn.admissible} and
$F\mathcal{O}(n) \neq 0$ for some $n \in \mathbb{Z}$, then
$$
\operatorname{dim}_{k} F\mathcal{O}(m) > \operatorname{dim }_{k}
F\mathcal{O}(n)
$$
for all $m < n$.
\end{lemma}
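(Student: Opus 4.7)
The plan is to argue by contradiction, exploiting the fact that $k$ is algebraically closed and that the Hom-space $\Hom(\mathcal{O}(m),\mathcal{O}(n))\cong\Gamma(X,\mathcal{O}(n-m))$ has dimension at least two.

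First I would fix $m<n$ and pick two linearly independent morphisms $\alpha,\beta\in\Hom(\mathcal{O}(m),\mathcal{O}(n))$, using the hypothesis $\dim_k\Gamma(X,\mathcal{O}(n-m))>1$. By property (4), both $F\alpha$ and $F\beta$ are epimorphisms $F\mathcal{O}(m)\twoheadrightarrow F\mathcal{O}(n)$. This already gives $\dim_kF\mathcal{O}(m)\geq\dim_kF\mathcal{O}(n)$, and in particular $F\mathcal{O}(m)\neq 0$ because $F\mathcal{O}(n)\neq 0$.

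Now suppose for contradiction that $\dim_kF\mathcal{O}(m)=\dim_kF\mathcal{O}(n)$. Both spaces are finite-dimensional (since $F$ takes coherent objects to coherent objects, and coherent objects on $\mathbb{P}^{0}=\Spec k$ are finite-dimensional $k$-vector spaces), so the epimorphism $F\alpha$ is in fact an isomorphism. Consider the endomorphism
\[
F\beta\circ(F\alpha)^{-1}\colon F\mathcal{O}(n)\longrightarrow F\mathcal{O}(n).
\]
Since $k$ is algebraically closed and $F\mathcal{O}(n)$ is a nonzero finite-dimensional $k$-vector space, this endomorphism has an eigenvalue $\lambda\in k$. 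Then $F\beta-\lambda\cdot F\alpha=F(\beta-\lambda\alpha)$ has nontrivial kernel, hence (as source and target have equal finite dimension) fails to be surjective, so is not an epimorphism.

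By property (4) this forces $\beta-\lambda\alpha=0$, contradicting the linear independence of $\alpha$ and $\beta$. The contradiction shows that the inequality must be strict, $\dim_kF\mathcal{O}(m)>\dim_kF\mathcal{O}(n)$. The main point — and the only place cleverness is needed — is invoking algebraic closure to produce an eigenvalue, which converts the ``$F\alpha$ epic for all nonzero $\alpha$'' hypothesis from a single inequality ($\geq$) into a strict one ($>$); the rest is bookkeeping.
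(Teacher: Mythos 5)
Your proof is correct and takes essentially the same approach as the paper: both argue by contradiction, use property (4) to obtain epimorphisms (and hence the weak inequality), and then invoke algebraic closure to produce a nonzero morphism that $F$ fails to send to an epimorphism. The paper finds a nontrivial zero of the degree-$d$ homogeneous polynomial $\det(x_0 F\alpha_0 + \cdots + x_r F\alpha_r)$ over a full basis, whereas you specialize to a pencil and extract an eigenvalue of $F\beta\circ(F\alpha)^{-1}$ --- a mild streamlining of the same idea.
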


\begin{proof}
Let $n$ be such that $F\mathcal{O}(n) \neq 0$ and suppose
that for all nonzero $\alpha \in
\Hom(\mathcal{O}(m),\mathcal{O}(n))$ with $m < n$ we have $F
\alpha$ epic.  To prove the assertion, we must exclude the
possibility that there exists some $m < n$ such that
$\operatorname{dim}_{k} F\mathcal{O}(m) = \operatorname{dim }_{k}
F\mathcal{O}(n)$. Suppose to the contrary that for some $m < n$,
$\operatorname{dim}_{k} F\mathcal{O}(m) = d =\operatorname{dim
}_{k} F\mathcal{O}(n) \neq 0$.  Then, for all nonzero $\alpha \in
\Hom(\mathcal{O}(m),\mathcal{O}(n))$, $F \alpha$ is an
isomorphism.  Pick a basis $\alpha_{0}, \ldots, \alpha_{r}$ for
$\Hom(\mathcal{O}(m),\mathcal{O}(n))$ and let $x_{0},\ldots,x_{r}$
denote indeterminates.  Note that by hypothesis, $r>0$.  Since
$$
\operatorname{det }(x_{0}F\alpha_{0}+ \cdots + x_{r}F\alpha_{r})
$$
is a homogeneous polynomial of degree $d > 0$ in
$k[x_{0},\ldots,x_{r}]$, it has a non-trivial zero which then
gives a non-zero $\alpha$ such that $F\alpha$ is not invertible.
This is a contradiction.
\end{proof}

The following lemma will be invoked in the proof of Proposition
\ref{propo.split}.  Its straightforward proof is omitted.

\begin{lemma} \label{lemma.extension}
Suppose $F_{1}, F_{2} \in {\sf funct}_{k}({\sf Qcoh }X, {\sf Qcoh
}Y)$ preserve direct limits.

If $\underline{\Delta}:F_{1}|_{{\sf coh }X} \longrightarrow
F_{2}|_{{\sf coh }X}$ is a natural transformation, then
$\underline{\Delta}$ extends uniquely to a natural transformation
$\Delta:F_{1} \longrightarrow F_{2}$.  If $\underline{\Delta}$ is
monic, i.e. if $\underline{\Delta}_{\M}$ is monic for all coherent
objects $\M$ in ${\sf Qcoh }X$, then $\Delta$ is monic in ${\sf
Funct}_{k}({\sf Qcoh }X,{\sf Qcoh }Y)$.  If $\underline{\Delta}$
is epic, then $\Delta$ is epic in ${\sf Funct}_{k}({\sf Qcoh
}X,{\sf Qcoh }Y)$.
\end{lemma}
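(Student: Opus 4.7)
The plan is to use the fact that for $X$ noetherian, every object of ${\sf Qcoh }X$ is the directed colimit of its coherent subsheaves. Combined with the hypothesis that $F_{1}$ and $F_{2}$ preserve direct limits, this reduces everything to the coherent case.

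First I would fix $\M \in {\sf Qcoh }X$ and write $\M=\dlim_{\a} \M_{\a}$, where $\{\M_{\a}\}$ is the directed system of coherent subsheaves of $\M$ (with the natural inclusions). Since $F_{i}$ preserves direct limits, $F_{i}\M \cong \dlim_{\a} F_{i}\M_{\a}$ for $i=1,2$. The maps $\underline{\Delta}_{\M_\a}:F_{1}\M_{\a} \to F_{2}\M_{\a}$ are compatible with the transition morphisms of the system (by naturality of $\underline{\Delta}$ on coherent objects), so they induce a morphism
$$
\Delta_{\M}:=\dlim_{\a}\underline{\Delta}_{\M_{\a}} : F_{1}\M \longrightarrow F_{2}\M.
$$
To verify that $\Delta$ is a natural transformation, I would take a morphism $\phi:\M \to \N$ in ${\sf Qcoh }X$. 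For each coherent subsheaf $\M_{\a} \subset \M$, the image $\phi(\M_{\a})$ is a coherent subsheaf of $\N$ (because $X$ is noetherian), so $\phi$ restricts to a morphism of coherent sheaves $\M_{\a} \to \phi(\M_{\a})$ that is compatible with the inclusion $\phi(\M_{\a})\hookrightarrow \N$. Naturality of $\underline{\Delta}$ on these coherent pieces, together with the universal property of the colimit, then yields commutativity of the naturality square for $\phi$.

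Next I would establish uniqueness. Suppose $\Delta':F_{1}\to F_{2}$ is any natural transformation restricting to $\underline{\Delta}$ on coherent objects. For each $\M$ and each coherent subsheaf $\M_{\a}\subset \M$ with inclusion $\iota_{\a}:\M_{\a}\hookrightarrow \M$, naturality of $\Delta'$ forces the square with $\Delta'_{\M_{\a}}=\underline{\Delta}_{\M_{\a}}$ on the left and $\Delta'_{\M}$ on the right, together with $F_{i}\iota_{\a}$ as horizontals, to commute. Since $F_{1}\M=\dlim_{\a}F_{1}\M_{\a}$, this determines $\Delta'_{\M}$ and hence $\Delta'_{\M}=\Delta_{\M}$.

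Finally, for the monic and epic statements, I would use that ${\sf Qcoh }Y$ is a Grothendieck category, so directed colimits in it are exact. If $\underline{\Delta}_{\M_{\a}}$ is monic for every coherent $\M_{\a}$, then $\Delta_{\M}$, being a directed colimit of monomorphisms, is monic; similarly with ``monic'' replaced by ``epic''. Since a morphism in ${\sf Funct}_{k}({\sf Qcoh }X,{\sf Qcoh }Y)$ is monic (resp. epic) iff it is so pointwise, this gives the conclusion. The only nontrivial point in the whole argument is checking naturality with respect to a morphism $\phi$ between non-coherent objects, which is handled by the observation that $\phi$ sends coherent subsheaves of its domain into coherent subsheaves of its codomain; everything else is a routine colimit manipulation.
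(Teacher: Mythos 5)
Your argument is correct, and it is almost certainly the proof the author had in mind (the paper omits the proof, calling it ``straightforward''). The three ingredients you use are exactly the right ones: that every quasi-coherent sheaf on a noetherian scheme is the filtered colimit of its coherent subsheaves; that preservation of direct limits lets you transfer this filtered-colimit presentation across $F_1$ and $F_2$; and that filtered colimits in ${\sf Qcoh}\,Y$ are exact (AB5), together with the fact that monos and epis in a functor category are detected pointwise. You also correctly identify the one place that requires a thought beyond routine colimit bookkeeping, namely naturality against a morphism $\phi:\M\to\N$ of possibly non-coherent sheaves, and you handle it correctly by noting that $\phi$ carries each coherent subsheaf $\M_\alpha$ into the coherent subsheaf $\phi(\M_\alpha)\subset\N$, so that the relevant squares factor through the coherent level where $\underline{\Delta}$ is known to be natural. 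The one point you leave implicit and could flag explicitly is that $\Delta$ really does restrict to $\underline{\Delta}$ on coherent objects; this is immediate because when $\M$ is coherent it is itself the top element of the directed poset of its coherent subsheaves, so the colimit construction returns $\underline{\Delta}_\M$ on the nose.
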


We introduce notation which will be used in the proof of
Proposition \ref{propo.split}: let $A=k[x_{0},x_{1}]$ denote the
polynomial ring in $2$ variables with its usual grading, let $[-]$ denote the shift functor, and let
$f_{i}:A[-(n+1)] \rightarrow A[-n]$ and $g_{i}:A[-(n+2)]
\rightarrow A[-(n+1)]$ denote multiplication by $x_{i}$.  Then we
have a short exact sequence in ${\sf Gr }A$:
$$
0 \longrightarrow A[-(n+2)]
\overset{(g_{1},-g_{0})}{\longrightarrow} A[-(n+1)]^{\oplus 2}
\overset{f_{0}+f_{1}}{\longrightarrow} A[-n] \longrightarrow k[-n]
\longrightarrow 0
$$
where $k$ denotes the trivial module.  This induces the short exact sequence
\begin{equation} \label{eqn.tangent}
0 \longrightarrow \mathcal{O}(-(n+2))
\overset{(\phi_{1},-\phi_{0})}{\longrightarrow}
\mathcal{O}(-(n+1))^{\oplus 2}
\overset{\psi_{0}+\psi_{1}}{\longrightarrow} \mathcal{O}(-n)
\longrightarrow 0.
\end{equation}

\begin{prop} \label{propo.split}
Suppose $F \in {\sf funct}_{k}({\sf Qcoh }\mathbb{P}^{1},{\sf Qcoh
}\mathbb{P}^{0})$ is admissible.  Then the set
$$
\{i \in \mathbb{Z} | F\mathcal{O}(i) \neq 0 \}
$$
has a maximum, $r$, and there is a monic
morphism
$$
\Delta:H^{1}(\mathbb{P}^{1},(-)(-2-r)) \rightarrow F
$$
in ${\sf Funct}_{k}({\sf Qcoh }{\mathbb{P}}^{1}, {\sf Qcoh
}\mathbb{P}^{0})$.
\end{prop}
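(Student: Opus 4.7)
My plan is to break the proof into three stages. First, I prove that the set $S = \{i \in \Z : F\mathcal{O}(i) \neq 0\}$ has a maximum. Nonemptiness: since $F \neq 0$ and commutes with direct limits, $F$ is nonzero on some coherent sheaf $\M$; by Grothendieck's classification of coherent sheaves on $\P^1$, $\M$ splits as a direct sum of line bundles plus a torsion sheaf, and Lemma \ref{lemma.totglob1} forces $F$ to kill the torsion summand (its zero-dimensional support lies in an affine open), so $F\mathcal{O}(n) \neq 0$ for some $n$. Boundedness follows from Lemma \ref{lemma.dim}: an infinite ascending chain $n_1 < n_2 < \cdots$ in $S$ would produce an infinite strictly descending chain of positive integers $\dim F\mathcal{O}(n_1) > \dim F\mathcal{O}(n_2) > \cdots$, which is impossible.

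Next, with $r = \max S$ and $0 \neq \xi \in F\mathcal{O}(r)$, I construct $\Delta$. Writing $G_r := H^{1}(\P^1, (-)(-2-r))$, I first build $\underline{\Delta} : G_r|_{{\sf coh}\,\P^1} \to F|_{{\sf coh}\,\P^1}$ and then extend uniquely to ${\sf Qcoh}\,\P^1$ via Lemma \ref{lemma.extension}. The essential tool is the Koszul sequence (\ref{eqn.tangent}). Applying $F$, half-exactness on vector bundles together with condition (4) of admissibility (via the factorization $\mathcal{O}(-(n+1)) \hookrightarrow \mathcal{O}(-(n+1))^{\oplus 2} \twoheadrightarrow \mathcal{O}(-n)$ through a single coordinate map, which is a nonzero element of $\Hom(\mathcal{O}(-(n+1)), \mathcal{O}(-n))$) yields a right-exact sequence at $F$; applying $G_r$ yields a right-exact sequence from the vanishing $H^{2}(\P^1,-) = 0$. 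On line bundles I set $\Delta_{\mathcal{O}(i)} = 0$ for $i > r$ and let $\Delta_{\mathcal{O}(r)} : k \cong H^{1}(\P^1, \mathcal{O}(-2)) \to F\mathcal{O}(r)$ send the canonical Čech generator to $\xi$; for $i < r$ I define $\Delta_{\mathcal{O}(i)}$ by downward induction using the Koszul ladder in such a way that $\bigoplus_i \Delta_{\mathcal{O}(i)}$ is a homomorphism of graded $k[x_0,x_1]$-modules, which forces naturality with respect to all morphisms between line bundles. Additivity handles arbitrary vector bundles, and using a length-one free resolution of a coherent sheaf on $\P^1$ together with the right-exact Koszul sequences one descends to all coherent sheaves.

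For monicity, the decomposition of coherent sheaves on $\P^1$ together with the vanishing of both $G_r$ and $F$ on torsion reduces the problem to line bundles. The map $\Delta_{\mathcal{O}(r)}$ is manifestly monic, and monicity of $\Delta_{\mathcal{O}(i)}$ for $i < r$ follows by induction from a diagram chase in the Koszul ladder, using the strict inequality $\dim F\mathcal{O}(i) > \dim F\mathcal{O}(i+1)$ from Lemma \ref{lemma.dim}. The main obstacle I anticipate is the naturality check: although multiplication by $x_0$ and $x_1$ generates all morphisms between line bundles, one must verify that the inductive construction actually produces a natural transformation, which amounts to controlling the ambiguity in the downward induction arising from possible non-injectivity of $F\mathcal{O}(i) \to F\mathcal{O}(i+1)^{\oplus 2}$. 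I expect to pin down $\Delta_{\mathcal{O}(i)}$ uniquely by imposing compatibility with the Koszul ladders rooted at both $i+1$ and $i+2$ simultaneously, and then to verify that the extension to coherent sheaves via resolutions is independent of the chosen resolution.
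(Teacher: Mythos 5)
Your proposal follows the same route as the paper's proof: existence of the maximum $r$ via Lemma \ref{lemma.dim}, a downward induction on degree that uses the Euler/Koszul sequence (\ref{eqn.tangent}) and half-exactness on vector bundles to lift $\underline{\Delta}_{\mathcal{O}(m)}$ from $\underline{\Delta}_{\mathcal{O}(m+1)}$ and $\underline{\Delta}_{\mathcal{O}(m+2)}$, naturality from the fact that multiplication by $x_0,x_1$ generates all maps between line bundles, and extension to ${\sf Qcoh}\,\mathbb{P}^{1}$ by Lemma \ref{lemma.extension}. Two remarks. First, your anticipated ``main obstacle'' is not one: you do not need to pin down the lift $v_i$ of $(\underline{\Delta}_{\mathcal{O}(m+1)}^{\oplus 2})(H\phi_1,-H\phi_0)(u_i)$ uniquely. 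Any choice satisfying $F\phi_j(v_i)=\underline{\Delta}_{\mathcal{O}(m+1)}H\phi_j(u_i)$ for $j=0,1$ is, by $k$-linearity, compatible with every element of $\operatorname{Hom}(\mathcal{O}(m),\mathcal{O}(m+1))$, and compatibility with degree-one maps at every level already forces naturality for all maps of line bundles; the paper simply picks an arbitrary lift. Monicity is also immediate without any dimension count, since $(F\phi_1,-F\phi_0)\circ\theta$ equals the monic composite $\underline{\Delta}_{\mathcal{O}(m+1)}^{\oplus 2}\circ(H\phi_1,-H\phi_0)$.

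The one genuine soft spot is the passage from vector bundles to arbitrary coherent sheaves ``using a length-one free resolution.'' As stated this is not justified by the hypotheses: $F$ is only assumed half-exact on short exact sequences of vector bundles, so you have no surjectivity of $F\mathcal{L}_0\rightarrow F\mathcal{M}$ to descend along, and no control of $F$ on a sequence ending in a non-locally-free sheaf. The correct (and easier) move, which you already invoke in your first stage, is Grothendieck's decomposition: every coherent sheaf on $\mathbb{P}^{1}$ is a direct sum of line bundles and a coherent torsion sheaf, and both $H^{1}(\mathbb{P}^{1},(-)(-2-r))$ and $F$ kill coherent torsion (Lemma \ref{lemma.totglob1}), so one defines $\underline{\Delta}$ summand by summand, sets it to zero on torsion, and checks naturality indecomposable by indecomposable. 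With that replacement your argument coincides with the paper's.
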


\begin{proof}
We first show that $r$ is well defined.  Since $F$ is non-zero and
totally global,  $F\mathcal{O}(n) \neq 0$ for some $n$.  Then
$\operatorname{dim }F\mathcal{O}(n) > \operatorname{dim
}F\mathcal{O}(n+1)$ by Lemma \ref{lemma.dim}, so $F \mathcal{O}(i)
= 0 $ for all $i>>0$.  Hence, the set $\{i \in \mathbb{Z} |
F\mathcal{O}(i) \neq 0 \}$ indeed has a maximum.

We let $H := H^{1}(\mathbb{P}^{1},(-)(-2-r))$ and note that
$H\mathcal{O}(r) = H^{1}(\mathbb{P}^{1},\mathcal{O}(-2)) \cong k$.
We first define a natural transformation
$\underline{\Delta}:H|_{{\sf coh }\mathbb{P}^{1}} \rightarrow
F|_{{\sf coh}\mathbb{P}^{1}}$ by defining $\underline{\Delta}_{\mathcal{F}}$ for each
indecomposable coherent sheaf $\mathcal{F}$.  If $\mathcal{F}$ is
torsion or isomorphic to $\mathcal{O}(i)$ with $i
> r$ we define $\underline{\Delta}_{\mathcal{F}}=0$, and we define
$\underline{\Delta}_{\mathcal{O}(r)}: H\mathcal{O}(r) \rightarrow
F\mathcal{O}(r)$ to be any nonzero map.  Now suppose we have
defined $\underline{\Delta}_{\mathcal{O}(i)}$ for all $i > m$ such
that each such $\underline{\Delta}_{\mathcal{O}(i)}$ is injective
and such that
$$
\begin{CD}
H\mathcal{O}(j) & \overset{H \psi}{\longrightarrow} & H\mathcal{O}(j+1) \\
@V{\underline{\Delta}_{\mathcal{O}(j)}}VV @VV{\underline{\Delta}_{\mathcal{O}(j+1)}}V \\
F\mathcal{O}(j) & \overset{F \psi}{\longrightarrow} & F\mathcal{O}(j+1)
\end{CD}
$$
commutes for $j \geq i$ and $\psi \in
\operatorname{Hom}_{\mathbb{P}^{1}}(\mathcal{O}(j),\mathcal{O}(j+1))$.
 We construct an injective homomorphism $\theta:H\mathcal{O}(m)
\rightarrow F\mathcal{O}(m)$ such that
\begin{equation} \label{eqn.theta1}
\begin{CD}
H\mathcal{O}(m) & \overset{H \phi}{\longrightarrow} & H\mathcal{O}(m+1) \\
@V{\theta}VV @VV{\underline{\Delta}_{\mathcal{O}(m+1)}}V \\
F\mathcal{O}(m) & \overset{F \phi}{\longrightarrow} &
F\mathcal{O}(m+1)
\end{CD}
\end{equation}
commutes for $\phi=\phi_{0},\phi_{1}$ (see (\ref{eqn.tangent}) for a
definition of these maps). To this end, we apply both $H$ and $F$
to the exact sequence (\ref{eqn.tangent}) with $n:= -m-2$ to get a
diagram
\begin{equation} \label{eqn.kerwork}
\begin{CD}
H\mathcal{O}(m) &
\overset{(H\phi_{1},-H\phi_{0})}{\longrightarrow} &
H\mathcal{O}(m+1)^{\oplus 2} &
\overset{H\psi_{0}+H\psi_{1}}{\longrightarrow} & H\mathcal{O}(m+2)
\\
&  & @VV{\underline{\Delta}_{\mathcal{O}(m+1)}^{\oplus 2}}V
@VV{\underline{\Delta}_{\mathcal{O}(m+2)}}V \\
F\mathcal{O}(m) &
\overset{(F\phi_{1},-F\phi_{0})}{\longrightarrow} &
F\mathcal{O}(m+1)^{\oplus 2} &
\overset{F\psi_{0}+F\psi_{1}}{\longrightarrow} & F\mathcal{O}(m+2)
\end{CD}
\end{equation}
with exact rows whose right square commutes.

To construct $\theta$, choose a basis $u_{1}, \ldots, u_{r-m+1}$
for $H \mathcal{O}(m)$.  Now,
$$
(H\phi_{1},-H\phi_{0})(u_{i}) \in \operatorname{ker
}(H\psi_{0}+H\psi_{1}).
$$
Thus, by the commutativity of the right-hand square of
(\ref{eqn.kerwork}),
$$
(\underline{\Delta}_{\mathcal{O}(m+1)},\underline{\Delta}_{\mathcal{O}(m+1)})(H\phi_{1}(u_{i}),-H\phi_{0}(u_{i}))
$$
is in the image of $(F\phi_{1},-F\phi_{0})$.  Hence, there exists
a $v_{i} \in F\mathcal{O}(m)$ such that
$F\phi_{j}(v_{i})=\underline{\Delta}_{\mathcal{O}(m+1)}H\phi_{j}(u_{i})$
for $i=1, \ldots, r-m+1$ and $j=0,1$.  We define
$\theta(u_{i})=v_{i}$. Since $F$ is $k$-linear, we conclude that
(\ref{eqn.theta1}) commutes for all $\phi \in
\operatorname{Hom}_{\mathbb{P}^{1}}(\mathcal{O}(m),\mathcal{O}(m+1))$.
We define $\underline{\Delta}_{\mathcal{O}(m)} := \theta$, and we note that
$\underline{\Delta}_{\mathcal{O}(m)}$ is monic since $(H\phi_{1},-H\phi_{0})$ is monic.

Next, we define $\underline{\Delta}_{\mathcal{F}}$ when
$\mathcal{F}$ is isomorphic to $\mathcal{O}(n)$.  Let $\alpha:
\mathcal{F} \rightarrow \mathcal{O}(n)$ be an isomorphism.  Define
$$
\underline{\Delta}_{\mathcal{F}}:=(F \alpha)^{-1} \circ
\underline{\Delta}_{\mathcal{O}(n)} \circ H \alpha.
$$
If $\beta: \mathcal{F} \rightarrow \mathcal{O}(n)$ is another
isomorphism, then $\beta = \lambda \alpha$ for some $0 \neq
\lambda \in k$, whence $(F \beta)^{-1}=\lambda^{-1}(F
\alpha)^{-1}$ and $H \beta = \lambda H \alpha$; thus the
definition of $\delta_{\mathcal{F}}$ does not depend on the choice
of $\alpha$.

We now define $\underline{\Delta}_{\mathcal{F}}$ for arbitrary
$\mathcal{F}$ by writing $\mathcal{F}$ as a direct sum of
indecomposables, say $\mathcal{F}= \oplus \mathcal{F}_{i}$, and
defining $\underline{\Delta}_{\mathcal{F}}:=\oplus
\underline{\Delta}_{{\mathcal{F}}_{i}}$.

To show that $\underline{\Delta}$ is a natural transformation we
must show that
\begin{equation} \label{eqn.welldefined}
\begin{CD}
H \mathcal{F} & \overset{Hf}{\longrightarrow} & H \mathcal{G} \\
@V{\underline{\Delta}_{\mathcal{F}}}VV @VV{\underline{\Delta}_{\mathcal{G}}}V \\
F \mathcal{F} & \underset{Ff}{\longrightarrow} & F \mathcal{G}
\end{CD}
\end{equation}
commutes for all $\mathcal{F}$ and $\mathcal{G}$ and all maps
$f:\mathcal{F} \rightarrow \mathcal{G}$.  It suffices to check
this when $\mathcal{F}$ and $\mathcal{G}$ are indecomposable.  The
diagram commutes when $\mathcal{G}$ is torsion because $F
\mathcal{G}=0$ then.  If $\mathcal{G}$ is torsion-free and
$\mathcal{F}$ torsion, then $f=0$ so the diagram commutes.  Thus,
the only remaining case is that when $\mathcal{F} \cong
\mathcal{O}(i)$ and $\mathcal{G} \cong \mathcal{O}(j)$ with $i
\leq j$.  The case $i=j$ is straightforward and we omit the verification in this case.
Thus, we may suppose $i>j$.

Write $f = \beta^{-1} g \alpha$ were $\alpha: \mathcal{F}
\rightarrow \mathcal{O}(i)$ and $\beta: \mathcal{G} \rightarrow
\mathcal{O}(j)$ are isomorphisms and $0 \neq g: \mathcal{O}(i)
\rightarrow \mathcal{O}(j)$.  We can write $g$ as a sum of terms of the form $\psi_{j}\psi_{j-1}
\cdots \psi_{i+1}$ where each $\psi_{l}:\mathcal{O}(l-1)
\rightarrow \mathcal{O}(l)$ is monic.  Now
$$
\underline{\Delta}_{\mathcal{O}(j)} \circ H\psi_{j} \circ
\cdots \circ H \psi_{i+1} = F \psi_{j} \circ \cdots \circ F
\psi_{i+1} \circ \underline{\Delta}_{\mathcal{O}(i)}
$$
and this implies
$$
\underline{\Delta}_{\mathcal{O}(j)} \circ
Hg = Fg \circ
\underline{\Delta}_{\mathcal{O}(i)}.
$$

Therefore,
\begin{eqnarray*}
\underline{\Delta}_{\mathcal{G}} \circ Hf & = & F \beta^{-1} \circ
\underline{\Delta}_{\mathcal{O}(j)} \circ H \beta \circ Hf \\
& = & F \beta^{-1} \circ
\underline{\Delta}_{\mathcal{O}(j)} \circ H g \circ H \alpha \\
& = & F \beta^{-1} \circ F g \circ
\underline{\Delta}_{\mathcal{O}(i)} \circ H \alpha  \\
& = & F f \circ F \alpha^{-1} \circ
\underline{\Delta}_{\mathcal{O}(i)} \circ H \alpha  \\
& = & Ff \circ \underline{\Delta}_{\mathcal{F}}.
\end{eqnarray*}

This shows that (\ref{eqn.welldefined}) commutes and so completes
the proof that $\Delta$ is natural.

Finally, $\underline{\Delta}_{\mathcal{F}}$ is monic for all
indecomposable coherent $\mathcal{F}$ and hence for all coherent
$\mathcal{F}$. It follows from Lemma \ref{lemma.extension} that
$\underline{\Delta}$ extends to a monic natural transformation
$$
\Delta: H \longrightarrow F.
$$
\end{proof}

\subsection{The Structure of Admissible Functors in ${\sf funct}_{k}({\sf Qcoh }\mathbb{P}^{1},{\sf Qcoh }\mathbb{P}^{0})$}
In this subsection, we work towards a proof, realized in Corollary
\ref{cor.extensionsplit}, that the monic $\Delta$ constructed in
Proposition \ref{propo.split} is split.  It follows (Theorem \ref{thm.structureoftg}) that an admissible functor in
${\sf funct}_{k}({\sf Qcoh }\mathbb{P}^{1},{\sf Qcoh }\mathbb{P}^{0})$ is a direct sum of cohomologies.  We assume, throughout the
subsection, that $X$ and $Y$ are projective schemes, $\F, \G, \M
\in {\sf Qcoh }X$ are coherent, and $F$ is an object of ${\sf
funct}_{k}({\sf Qcoh }X,{\sf Qcoh }Y)$.

We first define a natural transformation
$$
\Phi_{F}:  F|_{{\sf coh}X} \longrightarrow
\Hom(-,\mathcal{G})^{*}|_{{\sf coh }X}\otimes_{k} F \mathcal{G}
$$
which will be used to split the monic $\Delta$ constructed in
Proposition \ref{propo.split}.  To this end, we let
$$
\eta_{\F,\G}: k \longrightarrow \Hom_{X}(\F,\G)^{*} \otimes_{k}
\Hom_{X}(\F,\G)
$$
be defined as follows: $\eta_{\F,\G}(a):= a(\sum_{i}f_{i}^{*}
\otimes f_{i})$ where $\{f_{1},\ldots,f_{m}\}$ is a basis for
$\Hom_{X}(\F,\G)$.  We next note that the functor $F$ induces a
map
\begin{equation} \label{eqn.natural}
\phi_{\F,\G}: \Hom_{X}(\F,\G) \otimes_{k} F\F \longrightarrow F\G
\end{equation}
as follows:  if $U$ is an open set in $Y$, and $s \in F\F(U)$, we
define (\ref{eqn.natural}) over $U$ to be the map
$$
f \otimes s \mapsto F(f)(U)(s).
$$

We define the natural transformation
\begin{equation} \label{wattmap}
\Phi_{F}:  F|_{{\sf coh}X} \longrightarrow
\Hom(-,\mathcal{G})^{*}|_{{\sf coh }X}\otimes_{k} F \mathcal{G}.
\end{equation}
as follows:
$$
\Phi_{F\F}:F\F \longrightarrow
\Hom_{X}(\F,\mathcal{G})^{*}\otimes_{k} F \mathcal{G}
$$
is defined to be the composition of
$$
\eta_{\F,\mathcal{G}} \otimes_{k} F\F:F\F \longrightarrow
\Hom_{X}(\F,\mathcal{G})^{*} \otimes_{k} \Hom(\F,\G) \otimes_{k}
F\F
$$
with
$$
\Hom_{X}(\F,\G)^{*} \otimes_{k} \phi_{\F,\G}:\Hom_{X}(\F,\G)^{*}
\otimes_{k} \Hom_{X}(\F,\G) \otimes_{k} F\F \longrightarrow
$$
$$
\Hom_{X}(\F,\G)^{*} \otimes_{k}F\G.
$$
The proof that $\Phi_{F}$ is natural is straightforward and we
omit it.

\begin{lemma} \label{lemma.h1}
If $\N$ is a coherent object of ${\sf Qcoh }Y$, $\G$ is an
invertible $\mathcal{O}_{X}$-module and
$$
F=\Hom_{X}(-,\G)^{*} \otimes_{k} \N,
$$
then the morphism $\Phi_{F}$ is an isomorphism.
\end{lemma}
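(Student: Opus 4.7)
The plan is to verify that, under the canonical identification of $F\G$ with $\N$ coming from the invertibility of $\G$, the morphism $\Phi_{F\F}$ is literally the identity of $F\F$. Set $V := \Hom_{X}(\F,\G)$ and $W := \Hom_{X}(\G,\G)$, so that $F\F = V^{*}\otimes_{k} \N$ and $F\G = W^{*}\otimes_{k} \N$. Because $\G$ is invertible, $\Hom_{X}(\G,\G) \cong \Gamma(X, \mathcal{O}_{X})$; since $X$ is projective over the algebraically closed field $k$ (restricting, if necessary, to the connected component supporting $\G$), this identifies $W$ with $k$ via $\operatorname{id}_{\G} \mapsto 1$, and hence $F\G$ with $\N$.

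Next I would unwind the map $\phi_{\F,\G}$ of (\ref{eqn.natural}). Functoriality of $F = \Hom_{X}(-,\G)^{*}\otimes_{k}\N$ says that, for $f \in V$, the morphism $F(f) : V^{*}\otimes_{k} \N \to W^{*}\otimes_{k} \N$ is the transpose of the precomposition map $h_{f} : W \to V$, $g \mapsto g\circ f$, tensored with $\operatorname{id}_{\N}$. Thus for $v^{*}\otimes n \in F\F$ we have $F(f)(v^{*}\otimes n) = (v^{*}\circ h_{f})\otimes n$; evaluating $v^{*}\circ h_{f}$ at $\operatorname{id}_{\G}\in W$ gives $v^{*}(f)$, so under $W^{*} \cong k$ the element $\phi_{\F,\G}(f \otimes (v^{*}\otimes n))$ corresponds to $v^{*}(f)\cdot n \in \N$.

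Finally I would assemble $\Phi_{F\F}$. Choose a basis $\{f_{1},\dots,f_{m}\}$ of $V$ with dual basis $\{f_{1}^{*},\dots,f_{m}^{*}\}$. By the construction of (\ref{wattmap}),
$$
\Phi_{F\F}(v^{*}\otimes n) \;=\; \sum_{i} f_{i}^{*} \otimes F(f_{i})(v^{*}\otimes n) \;=\; \sum_{i} f_{i}^{*} \otimes v^{*}(f_{i})\cdot n \;=\; \Bigl(\sum_{i} v^{*}(f_{i})\, f_{i}^{*}\Bigr)\otimes n
$$
in $V^{*}\otimes_{k} F\G \cong V^{*}\otimes_{k} \N = F\F$. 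Since $\sum_{i} v^{*}(f_{i})\, f_{i}^{*} = v^{*}$ is the standard dual-basis identity, $\Phi_{F\F}$ is the identity on $F\F$, and in particular an isomorphism. The only nontrivial ingredient is the identification $W \cong k$; the rest is elementary linear algebra, so the main obstacle is the bookkeeping required to check that the coevaluation $\eta_{\F,\G}$ and the various transpose conventions built into (\ref{wattmap}) are compatible with the canonical identification $F\G \cong \N$ used throughout.
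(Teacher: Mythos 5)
Your proof is correct and takes essentially the same route as the paper: both unwind the definition of $\Phi_F$ over a simple tensor, use that $F(f)$ is the transpose of precomposition by $f$ tensored with $\operatorname{id}_{\N}$, and reduce the claim to the dual-basis identity together with the identification $\Hom_X(\G,\G)^*\cong k$. The only difference is cosmetic: the paper leaves $\Hom_X(\G,\G)^*$ in place and observes that $\Phi_{F\F}$ is a tensor product of two isomorphisms, while you collapse it to $k$ at the start and show $\Phi_{F\F}$ is literally the identity — a slightly cleaner bookkeeping of the same computation.
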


\begin{proof}
Let $\{f_{1},\ldots,f_{m}\}$ be a basis for $\Hom_{X}(\F,\G)$ and
let $U$ be open in $Y$.  Then
$$
\Phi_{F\F}(U):F\F(U) \longrightarrow
\Hom_{X}(\F,\G)^{*}\otimes_{k} F \G(U)
$$
sends $s \in F\F(U)$ to $\sum_{i=1}^{m}f_{i}^{*} \otimes
F(f_{i})(U)(s)$.

Suppose $s$ is a simple tensor, so
$$
s=\delta \otimes t \in \Hom_{X}(\F,\G)^{*} \otimes_{k} \N(U).
$$
We describe $F(f_{i})(U)(\delta \otimes t)$.  The map $f_{i}:\F
\longrightarrow \G$ induces the map
$$
- \circ f_{i}:\Hom_{X}(\G,\G) \longrightarrow \Hom_{X}(\F,\G).
$$
Dualizing gives a map
$$
\Hom_{X}(\F,\G)^{*} \longrightarrow \Hom_{X}(\G,\G)^{*}
$$
which sends $\delta$ to $\delta \circ (- \circ f_{i})$.  Therefore, $F(f_{i})(U)(\delta \otimes t)=\delta \circ (- \circ f_{i}) \otimes t$
and so the morphism
$$
\Phi_{F\F}(U):\Hom_{X}(\F,\G)^{*} \otimes_{k} \N(U) \longrightarrow \Hom_{X}(\F,\G)^{*} \otimes_{k}\Hom_{X}(\G,\G)^{*}\otimes_{k}\N(U)
$$
sends $\delta \otimes t$ to $\sum_{i}f_{i}^{*}\otimes (\delta \circ (- \circ f_{i})) \otimes t$.  Since the map
$\delta \circ (- \circ f_{i}) \in
\Hom_{X}(\G, \G)^{*} \cong k$ sends multiplication by $\alpha$ to
multiplication by $\alpha \delta(f_{i})$, the function
$$
\Hom_{X}(\F,\G)^{*} \longrightarrow \Hom_{X}(\F,\G)^{*}
\otimes_{k} \Hom_{X}(\G,\G)^{*}
$$
defined by sending $\delta$ to $\sum_{i}f_{i}^{*}
\otimes_{k}(\delta \circ (- \circ f_{i}))$ is injective and $k$-linear, hence
an isomorphism of vector spaces.  It follows that $\Phi_{F\F}(U)$ is a tensor product of two isomorphisms, and the assertion follows.
\end{proof}

\begin{lemma} \label{lemma.commutivity}
Let $\Theta:F' \longrightarrow F$ be a natural transformation
between elements of ${\sf funct}_{k}({\sf Qcoh }X,{\sf Qcoh }Y)$.
Then the diagram
$$
\begin{CD}
F & & & \overset{\Phi_{F}}{\longrightarrow} & \operatorname{Hom
}_{X}(-,\G)^{*} \otimes_{k} F\G \\
@A{\Theta}AA & & @AAA \\
F' & & & \underset{\Phi_{F'}}{\longrightarrow} & \operatorname{Hom
}_{X}(-,\G)^{*} \otimes_{k} F'\G
\end{CD}
$$
whose right vertical is induced by $\Theta$, commutes on coherent
objects.
\end{lemma}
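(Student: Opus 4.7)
The plan is to reduce the commutativity of the given diagram, on a coherent object $\F$, to the naturality of $\Theta$ applied to morphisms $f\colon \F \to \G$. To carry this out, I would first unwind the definition of $\Phi_{F\F}$ as the composition
\[
F\F \xrightarrow{\eta_{\F,\G}\otimes_k F\F} \Hom_X(\F,\G)^*\otimes_k \Hom_X(\F,\G)\otimes_k F\F \xrightarrow{\Hom_X(\F,\G)^*\otimes_k \phi_{\F,\G}^F} \Hom_X(\F,\G)^*\otimes_k F\G,
\]
and similarly for $\Phi_{F'\F}$, where I write $\phi_{\F,\G}^F$ and $\phi_{\F,\G}^{F'}$ to indicate which functor is being used in (\ref{eqn.natural}). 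I would then split the rectangle to be verified into two subrectangles by inserting the intermediate object $\Hom_X(\F,\G)^*\otimes_k \Hom_X(\F,\G)\otimes_k F'\F$ on the left side, with the middle vertical induced by $\Theta_\F$.

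The left subrectangle is induced by the morphism $\eta_{\F,\G}\otimes_k(-)$, and since $\eta_{\F,\G}$ depends only on $\F$ and $\G$ (not on the functor $F$), its commutativity is immediate from naturality of the tensor product in the variable being acted on by $\Theta_\F$.  The right subrectangle is the tensor product of $\Hom_X(\F,\G)^*$ with the square
\[
\begin{CD}
\Hom_X(\F,\G)\otimes_k F'\F @>{\phi_{\F,\G}^{F'}}>> F'\G \\
@V{\Hom_X(\F,\G)\otimes_k \Theta_\F}VV @VV{\Theta_\G}V \\
\Hom_X(\F,\G)\otimes_k F\F @>{\phi_{\F,\G}^{F}}>> F\G
\end{CD}
\]
so it suffices to show this last square commutes. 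Working locally on an affine open $U\subset Y$ and on a simple tensor $f\otimes s$ with $f\colon \F\to\G$ and $s\in F'\F(U)$, the two routes send $f\otimes s$ to $\Theta_\G(U)(F'(f)(U)(s))$ and $F(f)(U)(\Theta_\F(U)(s))$ respectively, and these agree by naturality of $\Theta$ applied to the morphism $f\colon\F\to\G$. Since $\Hom_X(\F,\G)$ is a finite-dimensional $k$-vector space (as $\F$ is coherent and $X$ is projective), every section is a finite sum of simple tensors, so this pointwise verification is enough.

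There is no real obstacle here: the content of the lemma is that $\phi_{\F,\G}^{(-)}$ is natural in the functor argument, which is just the definition of what a natural transformation between $k$-linear functors is; the role of $\eta_{\F,\G}$ is purely formal because it is constructed from $\F$ and $\G$ alone. I would close by noting that coherence of $\F$ is used only to guarantee finite-dimensionality of $\Hom_X(\F,\G)$ (so that $\eta_{\F,\G}$ and the dual space $\Hom_X(\F,\G)^*$ make sense as in the definition of $\Phi$), which is why the statement is restricted to the coherent subcategory.
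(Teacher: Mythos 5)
Your proof is correct and follows essentially the same route as the paper: both decompose $\Phi_F$ into the unit step $\eta_{\F,\G}\otimes_k(-)$ followed by the evaluation step $\Hom_X(\F,\G)^*\otimes_k\phi_{\F,\G}$, observe that the first square commutes trivially, and verify the second square on simple tensors over open $U\subset Y$ by appealing to the naturality of $\Theta$ applied to $f\colon\F\to\G$. Your closing remark that coherence is needed only to make $\Hom_X(\F,\G)$ finite-dimensional, so that $\eta_{\F,\G}$ is defined, is a helpful clarification not spelled out in the paper.
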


\begin{proof}
From the definition of $\Phi$, it suffices to show that the
diagram
\begin{equation} \label{eqn.unit0}
\begin{CD}
F\M & & & {\longrightarrow} & \operatorname{Hom
}_{X}(\M,\G)^{*} \otimes_{k} \Hom_{X}(\M,\G) \otimes_{k} F \\
@A{\Theta_{\M}}AA & & @AAA \\
F'\M & & & {\longrightarrow} & \operatorname{Hom }_{X}(\M,\G)^{*}
\otimes_{k} \Hom_{X}(\M,\G) \otimes_{k} F'
\end{CD}
\end{equation}
whose right vertical is induced by $\Theta$ and whose horizontals
are induced by the unit map $k \longrightarrow \Hom_{X}(-,\G)^{*}
\otimes_{k} \Hom_{X}(-,\G)$ commutes, and that the diagram
\begin{equation} \label{eqn.unit}
\begin{CD}
\Hom_{X}(\mathcal{M},\G) \otimes_{k} F\mathcal{M}
& & & \longrightarrow & & F\G \\
@AAA & & & @AA{\Theta_{\G}}A \\
\Hom_{X}(\mathcal{M},\G) \otimes_{k} F'\mathcal{M} & & & \longrightarrow & & F'\G \\
\end{CD}
\end{equation}
whose left vertical is induced by $\Theta$ and whose horizontals
are induced by evaluation, commutes.  The fact that (\ref{eqn.unit0}) commutes is trivial.
  We check commutativity of
(\ref{eqn.unit}).  The top route of (\ref{eqn.unit}) evaluated on
the open set $U \subset Y$ sends $f \otimes x$ to
$F(f)(U)(\Theta_{\M}(U)(x))$ while the bottom route of
(\ref{eqn.unit}) sends $f \otimes x$ to $\Theta_{\G}(U)
F'(f)(U)(x)$. These values are equal by the naturality of
$\Theta$.
\end{proof}

\begin{lemma} \label{lemma.split}
If $F \in {\sf funct}_{k}({\sf Qcoh}X, {\sf Qcoh}\mathbb{P}^{0})$
is such that there exists an invertible $\G \in {\sf Qcoh }X$ and
a monomorphism
$$
\Psi:\operatorname{Hom }_{X}(-,\G)^{*} \longrightarrow F
$$
in ${\sf funct}_{k}({\sf Qcoh }X,{\sf Qcoh}\mathbb{P}^{0})$, then
the restriction of $\Psi$ to coherents,
$$
\underline{\Psi}:\operatorname{Hom}_{X}(-,\G)^{*}|_{{\sf coh }X}
\longrightarrow F|_{{\sf coh }X},
$$
splits.
\end{lemma}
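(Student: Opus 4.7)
The plan is to exploit the natural transformation $\Phi_F$ defined just before Lemma \ref{lemma.h1}, together with the compatibility established in Lemma \ref{lemma.commutivity} and the fact that $\Phi$ is an isomorphism on functors of the form $\Hom_X(-,\G)^*\otimes_k \N$ (Lemma \ref{lemma.h1}). I would construct an explicit left inverse of $\underline\Psi$ by first splitting $\Psi$ pointwise at the single object $\G$, and then transporting this pointwise splitting across the diagram of Lemma \ref{lemma.commutivity}.

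First I would observe that since $\Psi$ is a monomorphism in ${\sf funct}_k({\sf Qcoh}X,{\sf Qcoh}\mathbb{P}^0)$ and monomorphisms in this functor category are computed componentwise, the evaluation $\Psi_\G: F'\G \longrightarrow F\G$ is a monomorphism in ${\sf Qcoh}\mathbb{P}^0$, where $F' := \Hom_X(-,\G)^*$. Because ${\sf Qcoh}\mathbb{P}^0$ is just the category of $k$-vector spaces, $\Psi_\G$ admits a $k$-linear retraction $\sigma: F\G \longrightarrow F'\G$. Next, apply Lemma \ref{lemma.h1} with $\N = k$ to conclude that
$$
\Phi_{F'}: F'|_{{\sf coh}X} \longrightarrow \Hom_X(-,\G)^* \otimes_k F'\G
$$
is an isomorphism (here $F'\G = \Hom_X(\G,\G)^*$, and $F' = \Hom_X(-,\G)^* \otimes_k k$ is of the required form).

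I would then define the candidate retraction on coherents as the composition
$$
\Xi \;:=\; \Phi_{F'}^{-1} \circ (\operatorname{id}_{\Hom_X(-,\G)^*} \otimes_k \sigma) \circ \Phi_F : F|_{{\sf coh}X} \longrightarrow F'|_{{\sf coh}X}.
$$
To verify $\Xi \circ \underline{\Psi} = \operatorname{id}$, I would invoke Lemma \ref{lemma.commutivity} applied to $\Theta = \Psi$, which gives
$$
\Phi_F \circ \underline{\Psi} \;=\; (\operatorname{id}_{\Hom_X(-,\G)^*} \otimes_k \Psi_\G) \circ \Phi_{F'}
$$
on coherents. Substituting this into the definition of $\Xi\circ\underline\Psi$ and using $\sigma\circ\Psi_\G = \operatorname{id}_{F'\G}$ yields $\Phi_{F'}^{-1}\circ\Phi_{F'} = \operatorname{id}_{F'|_{{\sf coh}X}}$, as required.

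I do not anticipate a serious obstacle: the only minor point to confirm is that $\Psi$ being monic in the functor category passes to $\Psi_\G$ being an injection of $k$-vector spaces (immediate from pointwise computation of monomorphisms), and that $F'=\Hom_X(-,\G)^*$ falls under the hypothesis of Lemma \ref{lemma.h1} (it does, with $\N = \mathcal{O}_{\mathbb{P}^0}$). The whole argument is completely formal once these two observations are in hand.
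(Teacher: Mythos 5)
Your proposal is correct and follows essentially the same route as the paper's proof: split $\Psi_{\G}$ at the single object $\G$, invoke Lemma \ref{lemma.h1} (with $\N = \mathcal{O}_{\mathbb{P}^{0}}$) to invert $\Phi_{\operatorname{Hom}_{X}(-,\G)^{*}}$, and transport the pointwise retraction through the compatibility square of Lemma \ref{lemma.commutivity}. The only cosmetic difference is that you write the retraction $\Xi$ as an explicit composite and verify the identity algebraically, whereas the paper phrases the same argument as a commutative-diagram manipulation; the underlying idea and the lemmas cited are identical.
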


\begin{proof}
Let $\psi: F\G \longrightarrow \operatorname{Hom }_{X}(\G,\G)^{*}$
be a splitting of $\Psi_{\G}$. Consider the diagram
$$
\begin{CD}
F\M  & \overset{\Phi_{F\M}}{\longrightarrow} & & & \operatorname{Hom
}_{X}(\M,\G)^{*} \otimes_{k}
F\G \\
@A{\Psi_{\M}}AA & & @AAA
\\
\operatorname{Hom }_{X}(\M,\G)^{*} &
\underset{\Phi_{\operatorname{Hom
}_{X}(-,\G)^{*}\M}}{\longrightarrow} & & & \operatorname{Hom
}_{X}(\M,\G)^{*} \otimes_{k} \operatorname{Hom }_{X}(\G,\G)^{*}
\end{CD}
$$
whose right vertical is induced by $\Psi_{\G}$.  The bottom
horizontal is an isomorphism by Lemma \ref{lemma.h1}, and the
diagram commutes by Lemma \ref{lemma.commutivity}.  It follows
that the diagram
$$
\begin{CD}
F\M  & \overset{\Phi_{F\M}}{\longrightarrow} & & & \operatorname{Hom
}_{X}(\M,\G)^{*} \otimes_{k}
F\G \\
@A{\Psi_{\M}}AA & & @VVV
\\
\operatorname{Hom }_{X}(\M,\G)^{*} &
\underset{\Phi_{\operatorname{Hom
}_{X}(-,\G)^{*}\M}}{\longrightarrow} & & & \operatorname{Hom
}_{X}(\M,\G)^{*} \otimes_{k} \operatorname{Hom }_{X}(\G,\G)^{*}
\end{CD}
$$
whose right vertical is induced by $\psi$, commutes.  The lemma follows.
\end{proof}

\begin{cor} \label{cor.extensionsplit}
Let $F \in {\sf funct}_{k}({\sf Qcoh }\mathbb{P}^{1}, {\sf Qcoh
}\mathbb{P}^{0})$ be admissible.  The monic
$$
\Delta:H^{1}(\mathbb{P}^{1},(-)(-2-r)) \longrightarrow F
$$
constructed in Proposition \ref{propo.split} splits.
\end{cor}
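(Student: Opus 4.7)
The plan is to apply Lemma \ref{lemma.split} after identifying $H^1(\mathbb{P}^1,(-)(-2-r))$ with $\operatorname{Hom}_{\mathbb{P}^{1}}(-,\mathcal{O}(r))^*$ by Serre duality, and then to promote the resulting coherent-level splitting to all of ${\sf Qcoh}\,\mathbb{P}^1$ via Lemma \ref{lemma.extension}.

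First, since $\omega_{\mathbb{P}^1}\cong\mathcal{O}(-2)$, Serre duality furnishes a natural isomorphism
$$
\beta_{\mathcal{F}}\colon H^{1}(\mathbb{P}^{1},\mathcal{F}(-2-r))\xrightarrow{\;\cong\;}\operatorname{Hom}_{\mathbb{P}^{1}}(\mathcal{F},\mathcal{O}(r))^{*}
$$
for every coherent $\mathcal{F}$ on $\mathbb{P}^{1}$. Pre-composing the restriction $\underline{\Delta}=\Delta|_{{\sf coh}}$ with $\beta^{-1}$ yields a monic natural transformation $\underline{\Psi}\colon\operatorname{Hom}_{\mathbb{P}^{1}}(-,\mathcal{O}(r))^{*}|_{{\sf coh}}\to F|_{{\sf coh}}$. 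A direct inspection of the argument establishing Lemma \ref{lemma.split} shows that it only ever evaluates $\Psi$ at coherent objects (the splitting is constructed on coherents using Lemma \ref{lemma.h1} and the naturality square of Lemma \ref{lemma.commutivity}, both of which involve only coherent $\mathcal{M}$), so I may invoke that lemma with $\mathcal{G}=\mathcal{O}(r)$ applied to $\underline{\Psi}$. Post-composing the resulting splitting of $\underline{\Psi}$ with $\beta$ yields a natural transformation
$$
\underline{\sigma}\colon F|_{{\sf coh}}\longrightarrow H^{1}(\mathbb{P}^{1},(-)(-2-r))|_{{\sf coh}}
$$
with $\underline{\sigma}\circ\underline{\Delta}=\operatorname{id}$.

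To finish, I extend this to $\sf Qcoh\,\mathbb{P}^{1}$. The functor $F$ preserves direct limits by admissibility, and $H^{1}(\mathbb{P}^{1},(-)(-2-r))$ does so because $\mathbb{P}^{1}$ is noetherian. Hence Lemma \ref{lemma.extension} extends $\underline{\sigma}$ uniquely to a natural transformation $\sigma\colon F\to H^{1}(\mathbb{P}^{1},(-)(-2-r))$. Both $\sigma\circ\Delta$ and $\operatorname{id}$ are natural endomorphisms of the direct-limit-preserving functor $H^{1}(\mathbb{P}^{1},(-)(-2-r))$ that restrict to the identity on coherents; by the uniqueness clause of Lemma \ref{lemma.extension} they are equal, so $\sigma$ is a splitting of $\Delta$.

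The main obstacle in this plan is the slight mismatch in the first step: Lemma \ref{lemma.split} is stated with $\Psi$ given on all of $\sf Qcoh$, yet $\operatorname{Hom}_{\mathbb{P}^{1}}(-,\mathcal{O}(r))^{*}$ need not commute with direct limits (dualization turns colimits into limits), so Serre duality does not supply $\beta$ over the whole of $\sf Qcoh$ directly. The cleanest resolution is the observation above that the proof of Lemma \ref{lemma.split} only uses coherents, and this is the point that must be verified honestly; as an alternative one could left-Kan-extend $\operatorname{Hom}_{\mathbb{P}^{1}}(-,\mathcal{O}(r))^{*}|_{\sf coh}$ to a direct-limit-preserving functor on $\sf Qcoh$, extend $\beta$ and $\underline{\Psi}$ accordingly using Lemma \ref{lemma.extension}, and then apply Lemma \ref{lemma.split}. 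Either way the content is concentrated at coherent sheaves, and the rest is bookkeeping.
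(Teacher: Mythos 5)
Your proof is correct and is essentially the paper's argument: convert $\Delta$ via Serre duality into a monic out of $\operatorname{Hom}_{\mathbb{P}^{1}}(-,\mathcal{O}(r))^{*}$, split it at the coherent level via Lemma \ref{lemma.split}, transport the splitting back through Serre duality, and then use Lemma \ref{lemma.extension} together with its uniqueness clause applied to $\sigma\circ\Delta$ to pass to all of ${\sf Qcoh}\,\mathbb{P}^{1}$. You are also right to flag that Lemma \ref{lemma.split} is formally stated for a $\Psi$ defined on all of ${\sf Qcoh}$ while only a coherent-level monic is available here; the paper applies it in exactly the same way without comment, and your observation that the proof of that lemma only ever evaluates functors at coherent $\mathcal{M}$ and $\mathcal{G}$ is the right justification.
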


\begin{proof}
The monic $\Delta$ restricts to a monic
$$
\underline{\Delta}:H^{1}(\mathbb{P}^{1},(-)(-2-r))|_{{\sf
coh}\mathbb{P}^{1}} \longrightarrow F|_{{\sf coh}\mathbb{P}^{1}}.
$$
By Serre duality, $\underline{\Delta}$ induces a monic
$$
\underline{\Delta}':\operatorname{Hom}_{\mathbb{P}^{1}}(-,\mathcal{O}(r))^{*}|_{{\sf
coh}\mathbb{P}^{1}} \longrightarrow F|_{{\sf coh}\mathbb{P}^{1}}
$$
which by Lemma \ref{lemma.split}, admits a splitting
$$
\underline{\Psi}':F|_{{\sf coh}\mathbb{P}^{1}} \longrightarrow
\operatorname{Hom}_{\mathbb{P}^{1}}(-,\mathcal{O}(r))^{*}|_{{\sf
coh}\mathbb{P}^{1}}.
$$
The map $\underline{\Psi}'$ induces, by Serre duality again, a
splitting
$$
\underline{\Psi}:F|_{{\sf coh}\mathbb{P}^{1}} \longrightarrow
H^{1}(\mathbb{P}^{1},(-)(-2-r))|_{{\sf coh}\mathbb{P}^{1}}
$$
of $\underline{\Delta}$.  We claim that $\underline{\Psi}$ extends
to a splitting
$$
\Psi: F \longrightarrow H^{1}(\mathbb{P}^{1},(-)(-2-r))
$$
of $\Delta$.  To this end, Lemma \ref{lemma.extension} implies
that $\underline{\Psi}$ has a unique extension
$\Psi$. We also know that $\Psi \Delta$
restricts on coherent objects to the map $\underline{\Psi}\mbox{
}\underline{\Delta} =
\operatorname{id}_{H^{1}(\mathbb{P}^{1},(-)(-2-r))|_{{\sf
coh}\mathbb{P}^{1}}}$.  But by Lemma \ref{lemma.extension},
$\underline{\Psi}\mbox{ }\underline{\Delta}$ extends uniquely to a
natural transformation
$$
H^{1}(\mathbb{P}^{1},(-)(-2-r)) \longrightarrow
H^{1}(\mathbb{P}^{1},(-)(-2-r)).
$$
Thus, $\Psi
\Delta=\operatorname{id}_{H^{1}(\mathbb{P}^{1},(-)(-2-r))}$,
whence the Corollary.
\end{proof}

We omit the straightforward proof of the following
\begin{lemma} \label{lemma.condition}
Suppose $F \in {\sf funct}_{k}({\sf Qcoh }\mathbb{P}^{1},{\sf Qcoh
}\mathbb{P}^{0})$ is admissible.

If $F \cong A \oplus B$ in ${\sf funct}_{k}({\sf Qcoh
}\mathbb{P}^{1}, {\sf Qcoh }\mathbb{P}^{0})$, and if $A$ is
non-zero, then $A$ is admissible as well.
\end{lemma}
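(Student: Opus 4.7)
The plan is to verify each of the four conditions (1)--(4) in Definition \ref{defn.admissible} for $A$, exploiting the fact that each condition is preserved under passing to direct summands in ${\sf funct}_{k}({\sf Qcoh }\mathbb{P}^{1}, {\sf Qcoh }\mathbb{P}^{0})$. Throughout, I fix an isomorphism $F \cong A \oplus B$ and write $\iota_{A}, \iota_{B}$ and $\pi_{A}, \pi_{B}$ for the canonical inclusions and projections; nonzeroness of $A$ is given by hypothesis.

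For condition (1), suppose $u : U \to \mathbb{P}^{1}$ is an open immersion with $U$ affine. Then $Au_{*}$ is a direct summand of $Fu_{*} = 0$, hence $Au_{*} = 0$. For condition (3), if $\{\M_{i}\}$ is a direct system in ${\sf Qcoh }\mathbb{P}^{1}$, then the canonical map $\varinjlim F\M_{i} \to F\varinjlim \M_{i}$ decomposes, via the natural isomorphisms $\varinjlim(A \oplus B)\M_{i} \cong \varinjlim A\M_{i} \oplus \varinjlim B\M_{i}$ and $(A \oplus B)\varinjlim \M_{i} \cong A\varinjlim\M_{i} \oplus B\varinjlim\M_{i}$, as the direct sum of the canonical maps for $A$ and for $B$. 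Since the sum is an isomorphism, each summand must be; thus $A$ commutes with direct limits. (Alternatively, $A \cong \pi_{A} F \iota_{A}$ as functors, and both $F$ and the projection/inclusion maps commute with colimits.)

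For condition (2), suppose $0 \to \mathcal{V}' \to \mathcal{V} \to \mathcal{V}'' \to 0$ is a short exact sequence of vector bundles on $\mathbb{P}^{1}$. Half-exactness of $F$ gives exactness of $F\mathcal{V}' \to F\mathcal{V} \to F\mathcal{V}''$ at the middle term, and under the direct-sum decomposition this sequence becomes
\[
A\mathcal{V}' \oplus B\mathcal{V}' \longrightarrow A\mathcal{V} \oplus B\mathcal{V} \longrightarrow A\mathcal{V}'' \oplus B\mathcal{V}''.
\]
Since the image of a sum of maps is the sum of the images and likewise for the kernel, the middle exactness of the sum is equivalent to middle exactness of each summand; in particular $A\mathcal{V}' \to A\mathcal{V} \to A\mathcal{V}''$ is exact at $A\mathcal{V}$. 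For condition (4), let $\alpha \in \Hom(\mathcal{O}(m),\mathcal{O}(n))$ be nonzero. Then $F\alpha = A\alpha \oplus B\alpha$ is an epimorphism by admissibility of $F$. Composing with the projection $A\mathcal{O}(n) \oplus B\mathcal{O}(n) \to A\mathcal{O}(n)$ yields $A\alpha \circ p_{A\mathcal{O}(m)}$, which is a composition of two epimorphisms and hence an epimorphism; since $p_{A\mathcal{O}(m)}$ is epic, $A\alpha$ must be epic as well.

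Combining the four verifications with the hypothesis $A \neq 0$ shows $A$ is admissible. No step presents any real obstacle — the entire argument reduces to the general fact that the four defining properties of admissibility are each closed under direct summands in the functor category, which is why the authors describe the proof as straightforward and omit it.
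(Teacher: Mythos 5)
Your proof is correct, and it is exactly the ``straightforward'' verification that the paper declines to write out: one checks each of the four clauses in Definition~\ref{defn.admissible} by observing that ``totally global,'' ``half-exact on vector bundles,'' ``commutes with direct limits,'' and ``takes nonzero $\alpha\in\Hom(\mathcal{O}(m),\mathcal{O}(n))$ to an epimorphism'' are all inherited by direct summands in the functor category, and that nonzeroness of $A$ is a hypothesis. Each of your four reductions is sound; the only blemish is the final clause in your treatment of condition (4), where you write ``since $p_{A\mathcal{O}(m)}$ is epic, $A\alpha$ must be epic'' --- the epimorphy of $p_{A\mathcal{O}(m)}$ is not what does the work there, but rather the general fact that if a composite $g\circ f$ is an epimorphism then $g$ is; equivalently, one can just note $\operatorname{cok}(A\alpha\oplus B\alpha)\cong\operatorname{cok}(A\alpha)\oplus\operatorname{cok}(B\alpha)$. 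This is a wording slip, not a gap.
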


\begin{thm} \label{thm.structureoftg}
If $F \in {\sf funct}_{k}({\sf Qcoh }\mathbb{P}^{1},{\sf Qcoh
}\mathbb{P}^{0})$ is admissible, then there exist integers $m,
n_{i} \geq 0$ such that
$$
F \cong \oplus_{i=-m}^\infty {H}^{1}(\mathbb{P}^{1},(-)(i))^{\oplus n_{i}}.
$$
\end{thm}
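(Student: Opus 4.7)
The plan is to iteratively apply Corollary \ref{cor.extensionsplit} to peel off direct summands of the form $H^{1}(\mathbb{P}^{1},(-)(-2-s))$ from $F$, and then assemble the resulting splittings into the desired decomposition by passing to a direct limit.

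A non-zero admissible $F$ must satisfy $F\mathcal{O}(i)\neq 0$ for some $i$, since otherwise additivity, vanishing on torsion coherent sheaves (totally global plus Lemma \ref{lemma.totglob1}), and commutation with direct limits would force $F=0$. Lemma \ref{lemma.dim} then implies $\{i : F\mathcal{O}(i)\neq 0\}$ is bounded above. Corollary \ref{cor.extensionsplit} gives $F\cong H^{1}(\mathbb{P}^{1},(-)(-2-r_{0}))\oplus F_{1}$ for $r_{0}$ this maximum, with $F_{1}$ either zero or again admissible (Lemma \ref{lemma.condition}). Iterating, and grouping consecutive steps sharing the same current max, yields a strictly decreasing sequence $s_{0}>s_{1}>\cdots$ of integers, multiplicities $n_{s_{j}}\geq 1$, and after the $N$-th round a decomposition
\[
F\cong \Sigma_{N}\oplus G_{N}, \qquad \Sigma_{N} := \bigoplus_{j=0}^{N-1} H^{1}(\mathbb{P}^{1},(-)(-2-s_{j}))^{\oplus n_{s_{j}}},
\]
with $G_{N}\mathcal{O}(i)=0$ for all $i\geq s_{N-1}$ (because the current max strictly drops below $s_{N-1}$, and every summand already peeled off also vanishes on such $\mathcal{O}(i)$).

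If the process terminates, $F\cong \Sigma_{N}$ and we are done. Otherwise, $s_{j}\to -\infty$ as a strictly decreasing integer sequence, and we set
\[
\Sigma := \bigoplus_{j=0}^{\infty} H^{1}(\mathbb{P}^{1},(-)(-2-s_{j}))^{\oplus n_{s_{j}}},
\]
which lies in the functor category since direct sums and direct limits commute. The splittings at each stage are compatible and assemble into $\iota:\Sigma\to F$. To check $\iota$ is an isomorphism, Lemma \ref{lemma.extension} reduces to the subcategory of coherents, and additivity plus vanishing on torsion further reduces to the line bundles $\mathcal{O}(a)$. For $N$ large enough that $s_{N-1}\leq a$, every summand of $\Sigma$ indexed by $j\geq N$ evaluates to $0$ on $\mathcal{O}(a)$ (since $s_{j}<a$ forces $H^{1}(\mathbb{P}^{1},\mathcal{O}(a-2-s_{j}))=0$), so $\Sigma\mathcal{O}(a)=\Sigma_{N}\mathcal{O}(a)$; combining this with $G_{N}\mathcal{O}(a)=0$ gives $F\mathcal{O}(a)=\Sigma_{N}\mathcal{O}(a)=\Sigma\mathcal{O}(a)$, as desired.

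The main obstacle is handling the infinite iteration cleanly: one must verify that grouping by max produces finite multiplicities $n_{s_{j}}$, that $s_{j}$ must strictly decrease rather than stall, and that the countable direct sum faithfully recovers $F$ in the limit. The first two issues are settled by Lemma \ref{lemma.dim} (each $F\mathcal{O}(s_{j})$ is finite-dimensional and each application of Corollary \ref{cor.extensionsplit} lowers this dimension by one), while the third is automatic once each evaluation $F\mathcal{O}(a)$ involves only finitely many nontrivial summands, which is exactly the $s_{j}\to -\infty$ observation.
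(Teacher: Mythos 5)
Your proposal is correct and follows essentially the same strategy as the paper: iterate Proposition \ref{propo.split} and Corollary \ref{cor.extensionsplit} to peel off cohomologies, assemble the splittings into a map from a direct sum, and then verify it is an isomorphism by reducing (via Lemma \ref{lemma.extension} and coherent decomposition on $\mathbb{P}^1$) to evaluation on each $\mathcal{O}(a)$. Your endgame is actually slightly cleaner than the paper's --- you track directly that each peel drops $\dim F\mathcal{O}(s)$ by one so the current maximum strictly descends, whereas the paper derives the vanishing of the residual $F^{(j)}\mathcal{M}$ from a maximality contradiction on $n_{-2-s+m}$; one small blemish is that your parenthetical claim that ``every summand already peeled off also vanishes on such $\mathcal{O}(i)$'' fails at $i = s_{N-1}$, but it is not needed since $G_N\mathcal{O}(i)=0$ for $i \geq s_{N-1}$ already follows from the max of $G_N$ being $s_N < s_{N-1}$.
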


\begin{proof}
Since $F$ is admissible, Proposition \ref{propo.split} implies that the set $\{i | F\mathcal{O}(i) \neq 0 \}$ has a maximum, $r$.  We let $m=r+2$.  Since $F$ preserves coherence, the set
$$
\{ n | \mbox{there exists a split monomorphism }H^{1}(\mathbb{P}^{1},(-)(-m))^{\oplus n} \rightarrow F \}
$$
has a maximum, which we call $n_{0}$.  If we let $F_{0}= H^{1}(\mathbb{P}^{1},(-)(-m))^{\oplus n_{0}}$, and we let $\delta_{0}:F_{0} \rightarrow F$ be a split monomorphism, then there is a sub-functor $F^{(1)}$ of $F$ such that $F \cong F_{0} \oplus F^{(1)}$.  By Lemma \ref{lemma.condition}, either $F^{(1)}$ is $0$ or $F^{(1)}$ is admissible.

Now, given a sub-functor $F^{(i)}$ of $F$ which is either $0$ or admissible, we construct an object $F_{i}$ in the category ${\sf funct}_{k}({\sf Qcoh }\mathbb{P}^{1},{\sf Qcoh
}\mathbb{P}^{0})$, a split monomorphism $\delta_{i}:F_{i} \rightarrow F^{(i)}$, and a sub-functor $F^{(i+1)}$ of $F^{(i)}$ which is either $0$ or admissible, as follows.  We let
$$
n_{i}=\mbox{max }\{ n | \mbox{there exists a split monomorphism }H^{1}(\mathbb{P}^{1},(-)(-m+i))^{\oplus n} \rightarrow F^{(i)} \},
$$
we let $F_{i}= H^{1}(\mathbb{P}^{1},(-)(-m+i))^{\oplus n_{i}}$, and we let $\delta_{i}:F_{i} \rightarrow F^{(i)}$ be a split monomorphism.  Then there is a sub-functor $F^{(i+1)}$ of $F^{(i)}$ such that $F^{(i)} \cong F_{i} \oplus F^{(i+1)}$.  By Lemma \ref{lemma.condition}, either $F^{(i+1)}$ is $0$, or $F^{(i+1)}$ is admissible.

In this way we get a morphism
$$
\Delta :\oplus_{i=0}^{\infty} {H}^{1}(\mathbb{P}^{1},(-)(-m+i))^{\oplus n_{i}} \rightarrow F.
$$
defined by $\Delta:=\oplus_{i=0}^{\infty}\delta_{i}$.  We claim that $\Delta$ is an isomorphism.  By Lemma \ref{lemma.extension}, it suffices to show that $\Delta|_{\sf{coh }\mathbb{P}^{1}}$ is an isomorphism.  To this end, let
$\mathcal{M}$ be a coherent $\mathcal{O}_{\mathbb{P}^{1}}$-module.  Then $\mathcal{M} \cong \mathcal{O}(i_{1}) \oplus \cdots \oplus \mathcal{O}(i_{n}) \oplus \mathcal{T}$ where $\mathcal{T}$ is coherent torsion and $i_{1}=\operatorname{min }\{i_{1},\ldots,i_{n}\}$.  It follows that
$$
\oplus_{i=0}^{\infty} {H}^{1}(\mathbb{P}^{1},\mathcal{M}(-m+i))^{\oplus n_{i}}=\oplus_{i=0}^{-2-i_{1}+m} {H}^{1}(\mathbb{P}^{1},\mathcal{M}(-m+i))^{\oplus n_{i}}.
$$
By the construction of $\Delta$, in order to show that $\Delta_{\mathcal{M}}$ is an isomorphism, it suffices to show that $F^{(-1-i_{1}+m)}(\mathcal{M})=0$.  If not, then $F^{(-1-i_{1}+m)}$ is an admissible direct summand of $F$.  By Proposition \ref{propo.split}, the set $\{i | F^{(-1-i_{1}+m)}\mathcal{O}(i) \neq 0 \}$ has a maximum, $s$, and there exists a split monomorphism ${H}^{1}(\mathbb{P}^{1},(-)(-2-s))\rightarrow F^{(-1-i_{1}+m)}$.  Since $F^{(-1-i_{1}+m)}$ is totally global and $F^{(-1-i_{1}+m)}(\mathcal{M})$ is nonzero, it follows that one of
$$
F^{(-1-i_{1}+m)}(\mathcal{O}(i_{1})),\ldots,F^{(-1-i_{1}+m)}(\mathcal{O}(i_{n}))
$$
is nonzero.  Hence $i_{1} \leq s$.  Since $m=r+2$ and $s \leq r$, it follows that $-m \leq -s-2$.  Thus, we have
$$
-m \leq -2-s \leq -2-i_{1}.
$$
This contradicts the maximality of $n_{-2-s+m}$.
\end{proof}

By Corollary \ref{cor.bimodadmis}, Theorem \ref{thm.structureoftg} immediately implies the following
\begin{cor} \label{cor.structureoftg}
If $F \in {\sf bimod}_{k}(\mathbb{P}^{1}-\mathbb{P}^{0})$ is totally global, then $F$ is a direct sum of
cohomologies, i.e. there exist integers $m,
n_{i} \geq 0$ such that
$$
F \cong \oplus_{i=-m}^\infty {H}^{1}(\mathbb{P}^{1},(-)(i))^{\oplus n_{i}}.
$$
\end{cor}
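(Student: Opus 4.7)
The plan is to obtain the corollary as an almost immediate consequence of Theorem \ref{thm.structureoftg}, whose statement is essentially identical to the one to be proved but under the stronger hypothesis that $F$ be \emph{admissible} (Definition \ref{defn.admissible}) rather than merely totally global in ${\sf bimod}_{k}(\mathbb{P}^{1}-\mathbb{P}^{0})$. So the work reduces to checking that the hypotheses of the present corollary imply admissibility, which is precisely the content of Corollary \ref{cor.bimodadmis}.

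First I would handle the trivial case $F=0$ separately: the claim holds vacuously by taking $m=0$ and all $n_{i}=0$, so that the right-hand side is the zero functor. In the nonzero case, I would verify conditions (1)--(4) of Definition \ref{defn.admissible} in turn. Conditions (1) and (3) are given: total globality is the hypothesis, and commuting with direct limits is built into the definition of ${\sf Bimod}_{k}(\mathbb{P}^{1}-\mathbb{P}^{0})$. Condition (2), half-exactness on vector bundles, follows from right-exactness, which is also built into ${\sf Bimod}_{k}(\mathbb{P}^{1}-\mathbb{P}^{0})$. For condition (4) I would invoke Lemma \ref{lemma.babydim}: since $\mathbb{P}^{1}$ is noetherian and $F$ is totally global, Lemma \ref{lemma.totglob1} shows $F$ vanishes on coherent torsion sheaves (their supports are finite sets of closed points, each contained in an affine open). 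Combined with right-exactness of $F$, Lemma \ref{lemma.babydim} then delivers condition (4). This is exactly the argument already sketched in the proof of Corollary \ref{cor.bimodadmis}.

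Having verified admissibility, an application of Theorem \ref{thm.structureoftg} produces the required decomposition $F\cong\oplus_{i=-m}^{\infty}H^{1}(\mathbb{P}^{1},(-)(i))^{\oplus n_{i}}$. There is essentially no remaining obstacle, since the difficult content lives in the earlier results: Proposition \ref{propo.split} constructs a monic from $H^{1}(\mathbb{P}^{1},(-)(-2-r))$ into any admissible $F$, Corollary \ref{cor.extensionsplit} splits this monic via a Serre-duality argument together with Lemma \ref{lemma.split}, and the inductive peeling-off of such split summands in the proof of Theorem \ref{thm.structureoftg} then exhausts $F$. The only observation beyond citing these results is the reduction step encoded in Corollary \ref{cor.bimodadmis}, so the proof will amount to little more than naming those two results in sequence.
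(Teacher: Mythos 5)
Your proof is correct and follows the same route as the paper, whose own proof simply invokes Corollary \ref{cor.bimodadmis} together with Theorem \ref{thm.structureoftg}. Your verification of admissibility unpacks the content of Corollary \ref{cor.bimodadmis}, and your explicit handling of the case $F=0$ is a sensible precaution (the paper is terse here) since admissible functors are nonzero by Definition \ref{defn.admissible}.
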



\begin{thebibliography}{99}


\bibitem{descent} S. Bosch, W. L\"{u}tkebohmert and M. Raynaud,
{\it N\'{e}ron models}, Springer-Verlag, 1990.

\bibitem{conrad} B. Conrad, {\it Grothendieck duality and base change}, Lecture Notes in Math. 1750,
Springer-Verlag, 2000.

\bibitem{E}
S. Eilenberg, Abstract description of some basic functors,
{\it J. Indian Math. Soc.(N.S.),} {\bf 24} (1960) 231-234.

\bibitem{hart} R. Harthshorne, {\it Algebraic geometry}, Springer-Verlag, 1977.

\bibitem{hartres} R. Hartshorne, {\it Residues and Duality}, Lecture Notes in Math. 20, Springer-Verlag, 1966.

\bibitem{blowup} C. Ingalls and D. Patrick, {\it Blowing up quantum weighted projective planes}, J. Algebra 254, (2002) 92--114.



\bibitem{ns} A. Nyman and S. P. Smith, {\it A generalization of
Watts's Theorem: right exact functors on module categories},
preprint.

\bibitem{polish} A. Polishchuk, {\it Kernel algebras and generalized Fourier-Mukai transforms}, preprint.

\bibitem{vblowup} M. Van den Bergh, Blowing up of non-commutative smooth surfaces, {\it Mem. Amer. Math. Soc.}, {\bf 154} (2001).

\bibitem{ncruled} M. Van den Bergh, {\it Non-commutative $\mathbb{P}^{1}$-bundles over commutative schemes}, preprint.

\bibitem{W} C. E. Watts, Intrinsic characterizations of some additive functors,
{\it Proc. Amer. Math. Soc.,} {\bf 11} (1960) 5-8.


\end{thebibliography}
\end{document}